\documentclass[a4paper,11pt]{article}
\usepackage[T2A]{fontenc}
\usepackage[utf8]{inputenc}
\usepackage[russian,english]{babel}

\usepackage{hyperref}
\usepackage[noadjust]{cite}

\usepackage{geometry}
\usepackage{graphicx}
\usepackage{longtable}
\usepackage{multirow}
\usepackage{amsfonts}
\usepackage{amsthm}
\usepackage{amsmath}
\usepackage{amssymb}
\usepackage{accents}
\usepackage{indentfirst}
\usepackage{enumerate}
\usepackage{float}
\usepackage{placeins}
\usepackage{mathtools}
\usepackage{csquotes}
\usepackage{setspace}
\usepackage{xifthen}
\usepackage{pdfpages}
\usepackage{transparent}
\usepackage{cite}
\usepackage{authblk}
\usepackage{arydshln}

\usepackage{rotating}
\usepackage{pdflscape}
\usepackage{multicol}

\usepackage{enumitem}
\setlist[enumerate]{font=\upshape,label=(\arabic*)}

\newtheorem{theorem}{Theorem}[section]
\newtheorem{corollary}[theorem]{Corollary}
\newtheorem{proposition}[theorem]{Proposition}
\newtheorem{lemma}[theorem]{Lemma}
\theoremstyle{definition}
\newtheorem{definition}[theorem]{Definition}
\newtheorem{descript}[theorem]{Description}

\newtheorem{notation}[theorem]{Notation}
\newtheorem{example}[theorem]{Example}
\newtheorem{remark}[theorem]{Remark}

\numberwithin{equation}{section}
\allowdisplaybreaks

\let\phi\varphi

\renewcommand{\Re}{\mathop{\mathfrak{Re}}\nolimits}
\renewcommand{\Im}{\mathop{\mathfrak{Im}}\nolimits}

\def\A{\mathcal{A}}
\def\E{\mathcal{E}}
\def\Hyp{\mathcal{H}}
\def\Main{\mathcal{M}}

\DeclareMathOperator{\sgn}{sgn}

\DeclareMathOperator{\Lin}{Lin}
\DeclareMathOperator{\Anc}{Anc}
\DeclareMathOperator{\Ann}{Ann}

\newcommand\til[1]{\widetilde{\phantom{,\mkern-4mu} #1 \phantom{,\mkern-4mu}}\mkern-1mu}

\newcommand\numeq[1]%
  {\stackrel{\scriptstyle(\mkern-1.5mu#1\mkern-1.5mu)}{=}}

\providecommand{\keywords}[1]{\textbf{Keywords:} #1}
\providecommand{\msc}[1]{\textbf{MSC 2020:} #1}

\newcommand\freefootnote[1]{%
    \bgroup
    \renewcommand\thefootnote{\fnsymbol{footnote}}%
    \renewcommand\thempfootnote{\fnsymbol{mpfootnote}}%
    \footnotetext[0]{#1}%
    \egroup
}

\begin{document}

\title{Orthogonality graphs of real Cayley--Dickson algebras. Part I: Doubly alternative zero divisors and their hexagons}
\author{
Svetlana Zhilina$^{a,b}$
}
\date{\small \em
$^a$Department of Mathematics and Mechanics, Lomonosov Moscow State\\ University, Moscow, 119991, Russia\\
$^b$Moscow Center of Fundamental and Applied Mathematics, Moscow, 119991, Russia
}

\maketitle

\begin{abstract}
We study zero divisors whose components alternate strongly pairwise and construct oriented hexagons in the zero divisor graph of an arbitrary real Cayley--Dickson algebra. In case of the algebras of the main sequence, the zero divisor graph coincides with the orthogonality graph, and any hexagon can be extended to a double hexagon. We determine the multiplication table of the vertices of a double hexagon. Then we find a sufficient condition for three elements to generate an alternative subalgebra of an arbitrary Cayley--Dickson algebra. Finally, we consider those zero divisors whose components are both standard basis elements up to sign. We classify them and determine necessary and sufficient conditions under which two such elements are orthogonal.
\end{abstract}

\keywords{Cayley--Dickson algebras, doubly alternative elements, strong alternativity, zero divisor graphs, orthogonality graphs.}

\msc{05C25, 17A20, 17D05, 17D99.}

\freefootnote{This work was supported by the Russian Science Foundation (project No. 17-11-01124).}

\freefootnote{Email address: \texttt{s.a.zhilina@gmail.com}}

\section{Introduction}

The Cayley--Dickson algebras have been a subject of intensive research to mathematicians for decades. We mention in particular classical works by Schafer~\cite{schafer}, Eakin and Sathaye~\cite{eakin}, and a fundamental book by McCrimmon~\cite{mccrimmon}.

One of the most peculiar properties of Cayley--Dickson algebras $\A_n$ is that they stop being alternative for $n \geq 4$. As a consequence, there appear zero divisors which are hard to study and to classify, except for some particular cases. At present most of the authors restrict their attention to the algebras of the main sequence which we denote by~$\Main_n$. In these algebras all parameters which determine the Cayley--Dickson construction are assumed to be equal to $-1$. The most successful efforts have been taken by Moreno~\cite{moreno, moreno_constructing} and Biss, Dugger, and Isaksen~\cite{biss, biss2}. Moreno's key idea was to study doubly alternative zero divisors, that is, such elements that their components are both alternative elements of the previous algebra. This notion was extended in~\cite{our_split-algebras} to the Cayley--Dickson split-algebras, denoted by~$\Hyp_n$, and led to similar results.

We are also concerned with a convenient representation of relations between zero divisors. One of the possible solutions is to draw a zero divisor graph or an orthogonality graph of the algebra. Actually, studying relation graphs of various algebras has recently become a rapidly extending branch of mathematics.

Some previous works on relation graphs of real Cayley--Dickson algebras include~\cite{our_split-algebras, our_split-sedenions} where orthogonality, commutativity, and zero divisor graphs of low-dimensional ($n \leq 4$) Cayley--Dickson split-algebras have been described. 

The current work is devoted to studying zero divisors in arbitrary real Cayley--Dickson algebras whose components satisfy some additional conditions on the norm and alternativity. We are interested in the patterns which they form in orthogonality and zero divisor graphs, and these patterns appear to be hexagonal, see Corollary~\ref{corollary:A_n-double-hexagon}. We pay a particular attention to the algebras of the main sequence, where hexagons can be extended to the so-called double hexagons, see Description~\ref{description:double-hexagon}. The vertices of a double hexagon have a convenient multiplication table which has a block structure, cf. Theorem~\ref{theorem:neat-basis}. These three statements are the main results of our paper.

In Section~\ref{section:alternative-subalgebras}, based on the work by Moreno~\cite{moreno_alternative} on the algebras of the main sequence, we determine a sufficient condition for two or three elements to generate an associative or an alternative subalgebra in an arbitrary real Cayley--Dickson algebra. We achieve this by constructing a homomorphism from $\A_2$ or $\A_3$ to the subalgebra discussed. However, in contrast to the algebras of the main sequence, this homomorphism may have a nontivial kernel.

In Section~\ref{section:basis-pairs}
we study zero divisors whose components are both standard basis elements up to sign, that is, which are of the form $(e_i, \pm e_j)$, and determine the conditions under which two such elements are orthogonal. The key statement for their classification is Lemma~\ref{lemma:basis-pairs-zero-divisors-relation} which is similar to the following property of the sedenions: if $(a,b)(c,d) = 0$ in $\mathbb{S}$, then $n(c)ab = -n(a)cd$, cf.~\cite[Proposition~3.4(ii)]{chan}. In our next article we will describe an orthogonality graph on the vertices of the form $(\pm e_i, \pm e_j)$ for an arbitrary Cayley--Dickson algebra.

In some higher-dimensional ($n = 4,5,6$) algebras of the main sequence, zero divisors of the form $(e_i, \pm e_j)$ have been studied by de Marrais~\cite{marrais}. In case of the sedenions, he has also obtained an analogue of our double hexagons~\cite[p.~3]{marrais2} and the multiplication table of their vertices~\cite[p.~8]{marrais3}. We emphasize, however, that our proof is more general.

\section{An overview of real Cayley--Dickson algebras} \label{section:A_n}

\subsection{Algebraic relations and their graphs} \label{subsection:definitions}

Let $\mathbb{F}$ be an arbitrary field and $(\A, +, \cdot)$ be an algebra with a unity $1_{\A}$ over the field $\mathbb{F}$. $\A$ is not assumed to be commutative or associative. We say that $a$ and $b$ in $\A$ {\em anticommute} if $ab + ba =0 $, and $a$ and $b$ {\em are orthogonal} if $ab = ba = 0$. We denote the set of zero divisors (left, right, or two-sided) of $\A$ by $Z(\A)$, and the set of two-sided zero divisors of $\A$ by $Z_{LR}(\A)$.

\begin{definition}
Let $a$ be an arbitrary element of $\A$.
\begin{itemize}
    \item
    {\em The anticentralizer} of $a$ is $\Anc_\A(a) = \big\{ b \in \A \: | \:  ab+ba=0 \big\}$, namely, the set of all elements in $\A$ which anticommute with $a$.
    \item
    {\em The orthogonalizer} of $a$ is $O_\A(a)=\big\{ b \in \A \: | \;  ab=ba=0 \big\}$, namely, the set of all elements in $\A$ which are orthogonal to $a$.
    \item
    {\em The left annihilator} of $a$ is the set $l.\Ann_\A(a) = \big\{ b \in \A \: | \; ba=0 \big\}$.
    \item
    Similarly, {\em the right annihilator} of $a$ is $r.\Ann_\A(a) = \big\{ b \in \A \: | \; ab=0 \big\}$.
\end{itemize}
\end{definition}

Clearly, these subsets are vector spaces over $\mathbb{F}$.

\begin{notation}
For any set $X \subseteq \A$ we denote the set of lines passing through elements of $X$ by
$$
P(X) = \{ [x] = \mathbb{F} x \; | \; x \in X \}.
$$
\end{notation}

\begin{definition}
Let $\A$ be an arbitrary algebra.
\begin{itemize}
\item 
{\em The orthogonality graph} $\Gamma_O(\A)$ is defined as follows: its vertices are lines in $Z_{LR}(\A)$, that is,
$$
V(\Gamma_O(\A)) = P(Z_{LR}(\A)),
$$
and distinct vertices $[a]$ and $[b]$ are adjacent if and only if $ab=ba=0$.
\item
{\em The directed zero divisor graph} $\Gamma_Z(\A)$ is defined as follows: its vertices are lines in $Z(\A)$, that is,
$$
V(\Gamma_Z(\A)) = P(Z(\A)),
$$
and distinct vertices $[a]$ and $[b]$ form a directed edge $([a], [b])$ if and only if $ab=0$.
\end{itemize}
\end{definition}

Note that the edges of $\Gamma_O(\A)$ and $\Gamma_Z(\A)$ are well-defined. When speaking of the vertices of these graphs, we will not distinguish between a nonzero element $a$ and a line $[a] = \mathbb{F} a$ passing through it.

In an undirected graph $\Gamma$, $d(x,y) = d_{\Gamma}(x,y)$ denotes the distance between two vertices $x$ and $y$, and $d(\Gamma) = \sup\limits_{x,y \in \Gamma} d(x,y)$ denotes the diameter of $\Gamma$.

\subsection{Constructing Cayley--Dickson algebras} \label{subsection:A_n}

We refer the reader to~\cite{mccrimmon,schafer} for auxiliary definitions and general properties of Cayley--Dickson algebras.

\begin{definition} \label{definition:Cayley--Dickson-algebras}
Let $\A$ be an algebra over a field $\mathbb{F}$ with an involution $a \mapsto \bar{a}$. The algebra $\A \{ \gamma \}$ produced by the Cayley--Dickson process, when applied to $\A$ with the parameter $\gamma \in \mathbb{F}$, $\gamma \neq 0$, is defined as the set of ordered pairs of elements of $\A$ with operations
\begin{align*}
\alpha(a,b)&=(\alpha a, \alpha b);\\
(a,b)+(c,d)&=(a+c,b+d);\\
(a,b)(c,d)&=(ac+\gamma \bar{d}b,da+b\bar{c})
\end{align*} 
and the involution
$$
\qquad (\overline{a,b})=(\bar{a},-b), \qquad a,b,c,d\in \A, \ \alpha \in \mathbb{F}.
$$
If the involution on $\A$ is regular, that is, $a + \bar{a} \in \mathbb{F}1_{\A}$ and $a\bar{a} = \bar{a}a \in \mathbb{F}1_{\A}$ for all $a \in \A$, then the involution on $\A \{ \gamma \}$ is also regular, cf.~\cite[p.~435]{schafer}.
\end{definition}

Henceforth we assume that $\mathbb{F} = \mathbb{R}$. We now define an arbitrary real Cayley--Dickson algebra which is determined by the set of its parameters. The most common definition of real Cayley--Dickson algebras assumes that all parameters are equal to $-1$. These particular algebras are what we call the algebras of the main sequence.

\begin{definition} \label{definition:A_n}
For every integer $n \geq 0$ and nonzero real numbers $\gamma_0, \dots, \gamma_{n-1}$ we define the real Cayley--Dickson algebra $\A_n = \A_n \{ \gamma_0, \dots, \gamma_{n-1} \}$ inductively:
\begin{enumerate}
\item $\A_0 = \mathbb{R}$, and $e^{(0)}_0 = 1$ is its only basis element;
\item If $\A_n \{ \gamma_0, \dots, \gamma_{n-1} \}$ is constructed, then $\A_{n+1} \{ \gamma_0, \dots, \gamma_n \} =
\\
(\A_n \{ \gamma_0, \dots, \gamma_{n-1} \}) \{ \gamma_n \}$. Its basis elements are $e^{(n+1)}_0, \dots, e^{(n+1)}_{2^{n+1}-1}$ such that
\begin{equation*}
e^{(n+1)}_j =
\begin{cases}
(e^{(n)}_j,0), & 0 \leq j \leq 2^n-1,\\
(0,e^{(n)}_{j-2^n}), & 2^n \leq j \leq 2^{n+1}-1.
\end{cases}
\end{equation*}
\end{enumerate}
\end{definition}

For every integer $n \geq 0$ the structure $\A_n$ in Definition~\ref{definition:A_n} is a $2^n$-dimensional algebra over $\mathbb{R}$ with the unit element $e^{(n)}_0$ and a regular involution, cf.~\cite[Lemma 3.14]{our_split-algebras}.
Consider the following definitions which are analogous to those for complex numbers.

\begin{definition} \label{definition:real-imaginary-part}
\leavevmode
\begin{itemize}
	\item Let $a \in \A_n$. Its {\em real part} is $\Re(a) = \frac{a + \bar{a}}{2}$, its {\em imaginary part} is $\Im(a) = \frac{a - \bar{a}}{2}$, and its {\em norm} is $n(a) = a \bar{a} = \bar{a}a$. Since the involution on $\A_n$ is regular, $\Re(a), n(a) \in \mathbb{R}$.
	\item An element $a \in \A_n$ is said to be {\em pure} if $\Re(a) = 0$.
	\item An element $(a, b) \in \A_{n+1}$ is said to be {\em doubly pure} if $\Re(a) = \Re(b) = 0$.
	\item An element $(a, b) \in \A_{n+1}$ is {\em four times pure} if $a$ and $b$ are doubly pure.
\end{itemize}
\end{definition}

\begin{proposition}[{\cite[p. 435]{schafer}}]
We can compute real part and norm of an element $(a,b) \in \A_{n+1}$ inductively by using the following equalities:
\begin{align*}
	\Re((a,b)) &= \Re(a),\\
	n((a,b)) &= n(a) - \gamma_n n(b).
\end{align*}
\end{proposition}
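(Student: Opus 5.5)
The plan is to compute directly from the Cayley--Dickson formulas in Definition~\ref{definition:Cayley--Dickson-algebras}, using only that the involution on $\A_n$ is regular. This means $a+\bar a\in\mathbb{R}$ and $a\bar a=\bar a a\in\mathbb{R}$ for every $a\in\A_n$; real scalars are central and fixed by the involution.

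For the real part, I use the definition of the involution on $\A_{n+1}$, namely $\overline{(a,b)}=(\bar a,-b)$. This gives
$$
(a,b)+\overline{(a,b)}=(a+\bar a,\,0).
$$
Since $a+\bar a=2\Re(a)\in\mathbb{R}$, the element $(a+\bar a,0)$ is the scalar $2\Re(a)$ times the unit $e^{(n+1)}_0=(1,0)$. Dividing by $2$ gives $\Re((a,b))=\Re(a)$.

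For the norm, I apply the multiplication rule
$$
(a,b)(c,d)=(ac+\gamma_n\bar d b,\; da+b\bar c)
$$
with $c=\bar a$ and $d=-b$:
$$
(a,b)\overline{(a,b)}=(a,b)(\bar a,-b)=\bigl(a\bar a-\gamma_n\bar b b,\; -ba+b\bar{\bar a}\bigr).
$$
The involution on $\A_n$ is an involution, so $\bar{\bar a}=a$. Hence the second component is $-ba+ba=0$. The first component is $n(a)-\gamma_n\bar b b$, and by regularity $\bar b b=n(b)$. Therefore $(a,b)\overline{(a,b)}=(n(a)-\gamma_n n(b))\cdot(1,0)$. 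The same computation for $\overline{(a,b)}(a,b)=(\bar a,-b)(a,b)$ gives first component $\bar a a-\gamma_n\bar b b$ and second component $b\bar a-b\bar a=0$. So both products agree, which confirms that $n((a,b))=n(a)-\gamma_n n(b)$ and that the norm is well defined.

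No step here is a real obstacle. The only point needing care is tracking the sign and the order of factors in the term $\gamma\bar d b$, together with the facts $\bar{\bar a}=a$ and $\bar b b=b\bar b$. These hold inductively because each $\A_n$ carries a regular involution, as noted after Definition~\ref{definition:A_n}.
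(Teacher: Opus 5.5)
Your proof is correct. Expanding $(a,b)\overline{(a,b)}=(a,b)(\bar a,-b)$ and $\overline{(a,b)}(a,b)$ with the paper's product rule, using only regularity of the involution on $\A_n$, gives $\bigl(n(a)-\gamma_n n(b),0\bigr)$ in both orders, and $(a,b)+\overline{(a,b)}=(a+\bar a,0)$ gives the real part. The paper gives no proof of its own and cites Schafer, where the argument is this same direct verification, so your route matches the intended one.
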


\begin{notation}
We denote
\begin{align*}
    \E_n &= \left\{ e^{(n)}_0, e^{(n)}_1, \dots, e^{(n)}_{2^{n}-1} \right\},\\
    \E'_n &= \left\{ e^{(n)}_1, \dots, e^{(n)}_{2^{n}-1} \right\} = \E_n \setminus \left\{ e^{(n)}_0 \right\},\\
    \E''_n &= \left\{ e^{(n)}_1, \dots, e^{(n)}_{2^{n-1}-1}, e^{(n)}_{2^{n-1}+1}, \dots e^{(n)}_{2^{n}-1} \right\} = \E_n \setminus \left\{ e^{(n)}_0, e^{(n)}_{2^{n-1}} \right\}.
\end{align*}
Clearly, $\E'_n$ is the set of pure basis elements, and $\E''_n$ is the set of doubly pure basis elements. We also denote
$$
\A'_n = \{ a \in \A_n \; | \; \Re(a) = 0 \}.
$$
\end{notation}

\subsection{Some properties of real Cayley--Dickson algebras} \label{subsection:A_n-properties}

Henceforth we assume that $\A$ is an arbitrary algebra over a field $\mathbb{F}$, and $\A_n = \A_n \{ \gamma_0, \dots, \gamma_{n-1} \}$ is an arbitrary real Cayley--Dickson algebra. By~\cite[Exercise 2.5.1]{mccrimmon}, $\A_n \{ \gamma_0, \dots, \gamma_{n-1} \}$ is isomorphic to $\A_n \{ \sgn(\gamma_0), \dots, \sgn(\gamma_{n-1}) \}$, so it is sufficient to consider only $\gamma_k \in \{ \pm 1 \}$, $k = 0, \dots, n-1$.

\begin{proposition} \label{proposition:lambda-form}
Let $\langle a, b \rangle$ denote a real-valued symmetric bilinear form associated with the quadratic form $n(a)$. Then $\langle a, a \rangle = n(a)$ and $2\langle a,b \rangle = a \bar{b} + b \bar{a} = \bar{a} b + \bar{b} a$ for all $a, b \in \A_n$, cf. \cite[Proposition~3.18, Proposition~3.19]{our_split-algebras}.

We remark that for all $a, b \in \A_n$ we have $\langle a, b \rangle = \langle \bar{a}, \bar{b} \rangle$ and $\Re(a) = \langle a, e_0 \rangle$.
\end{proposition}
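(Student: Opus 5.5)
The plan is to \emph{define} the form by polarization of the norm and then read off every identity from the definition $n(a) = a\bar{a} = \bar{a}a$. Concretely, set
$$
\langle a, b \rangle = \tfrac{1}{2}\bigl( n(a+b) - n(a) - n(b) \bigr), \qquad a, b \in \A_n .
$$
This is manifestly symmetric and satisfies $\langle a, a \rangle = \tfrac12\bigl(n(2a) - 2n(a)\bigr) = n(a)$, provided $n$ is a genuine quadratic form, i.e.\ $n(\alpha a) = \alpha^2 n(a)$ and the polarization is bilinear.

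The first step is therefore to verify that $n$ is a real quadratic form on $\A_n$. I will do this by induction on $n$, using the inductive formula $n((a,b)) = n(a) - \gamma_n n(b)$ from the preceding proposition. For $\A_0 = \mathbb{R}$ we have $n(x) = x^2$. If $n$ is a quadratic form on $\A_n$, then $(a,b) \mapsto n(a) - \gamma_n n(b)$ is a quadratic form on $\A_{n+1} = \A_n \oplus \A_n$. In fact, in the standard basis $\E_n$ this shows that $n$ is diagonal with coefficients $\pm 1$ (after the reduction to $\gamma_k \in \{\pm1\}$). Hence the polarization above is bilinear.

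The second step is to establish the two displayed expressions for $2\langle a, b\rangle$. By induction on the definition $(\overline{a,b}) = (\bar{a}, -b)$, the involution is $\mathbb{R}$-linear, and multiplication is bilinear by construction. Expanding with $n(x) = x\bar{x}$ gives
$$
n(a+b) = (a+b)(\bar{a}+\bar{b}) = a\bar{a} + b\bar{b} + a\bar{b} + b\bar{a},
$$
so $2\langle a,b\rangle = a\bar{b} + b\bar{a}$. Expanding instead with $n(x) = \bar{x}x$, which is allowed because the involution is regular, gives $2\langle a,b\rangle = \bar{a}b + \bar{b}a$.

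The remaining remarks follow directly from these identities.
\begin{itemize}
\item For $\langle \bar{a}, \bar{b}\rangle = \langle a, b\rangle$, I use that the involution is an involution, $\bar{\bar{a}} = a$, which is checked inductively from $(\overline{a,b}) = (\bar{a},-b)$. Then $n(\bar{a}) = \bar{a}\,\bar{\bar{a}} = \bar{a}a = n(a)$. Applying this also to $a+b$ and polarizing yields the claim.
\item For $\Re(a) = \langle a, e_0\rangle$, I note that $\bar{e}_0 = e_0$ (again by induction, since $e^{(n+1)}_0 = (e^{(n)}_0, 0)$). Then $2\langle a, e_0\rangle = a\bar{e}_0 + e_0\bar{a} = a + \bar{a} = 2\Re(a)$.
\end{itemize}

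I expect no serious obstacle here. The only point needing care is the bookkeeping that the involution is linear and of order two, and that $a\bar{a} = \bar{a}a$ is real. Both are part of the regularity of the involution quoted from~\cite{schafer} and~\cite[Lemma 3.14]{our_split-algebras}, and I would take them as given.
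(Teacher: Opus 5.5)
Your proof is correct. The paper gives no argument of its own here; it only refers to Propositions~3.18 and~3.19 of the earlier work on split-algebras. Your polarization argument is a standard, self-contained verification of those facts.

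Expanding $n(a+b)$ once as $(a+b)(\bar{a}+\bar{b})$ and once as $(\bar{a}+\bar{b})(a+b)$ yields both expressions for $2\langle a,b\rangle$. The two closing remarks then follow from $\bar{\bar{a}} = a$ and $\bar{e}_0 = e_0$.

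One simplification: your first step, the induction showing that $n$ is a quadratic form, can be dropped. The identity $a\bar{b} + b\bar{a} = n(a+b) - n(a) - n(b)$ uses only additivity of the involution. Its left-hand side is visibly bilinear, since multiplication is bilinear and the involution is linear. Its right-hand side is real by regularity. Together these already show that the polarized form is real-valued, symmetric and bilinear. Finally, $\langle a,a\rangle = n(a)$ is immediate from $2\langle a,a\rangle = a\bar{a} + a\bar{a} = 2n(a)$.
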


\begin{lemma}[{\cite[Lemma 1.3]{moreno}, \cite[Lemma 4.2]{our_split-algebras}}] \label{lemma:inner-product-movement}
Let $a, b, c \in \A_n$. Then $\langle a, bc \rangle = \langle a\bar{c}, b \rangle = \langle \bar{b}a, c \rangle$.
\end{lemma}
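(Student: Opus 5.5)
We prove the lemma by induction on $n$, working directly from the Cayley--Dickson multiplication rule and the inductive formula for the norm. By the polarization identity of Proposition~\ref{proposition:lambda-form}, it suffices to establish the first equality $\langle a, bc \rangle = \langle a\bar{c}, b \rangle$ for all $a,b,c \in \A_n$. The second equality follows from the first by applying the involution. Indeed, $\langle a, bc\rangle = \langle \bar a, \overline{bc}\rangle = \langle \bar a, \bar c\,\bar b\rangle$, and the first identity applied to the triple $(\bar a,\bar c,\bar b)$ gives $\langle \bar a b, \bar c\rangle = \langle \overline{\bar a b}, c\rangle = \langle \bar b a, c\rangle$. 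This uses that $\overline{xy} = \bar y\,\bar x$ and $\langle x,y\rangle = \langle \bar x,\bar y\rangle$, both standard for regular involutions on $\A_n$ and proved by a straightforward induction.

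For the base of the induction, $\A_0 = \mathbb{R}$, so the statement is trivial. For the inductive step, we write $a=(a_1,a_2)$, $b=(b_1,b_2)$, $c=(c_1,c_2)$ in $\A_{n+1}$. The bilinear form on $\A_{n+1}$ is $\langle (x_1,x_2),(y_1,y_2)\rangle = \langle x_1,y_1\rangle - \gamma_n\langle x_2,y_2\rangle$, obtained by polarizing $n((x,y)) = n(x)-\gamma_n n(y)$. The product is
$$bc = (b_1c_1+\gamma_n\bar c_2 b_2,\; c_2 b_1 + b_2\bar c_1),$$
and, since $\bar c = (\bar c_1,-c_2)$,
$$a\bar c = (a_1\bar c_1 - \gamma_n c_2 a_2,\; -c_2a_1 + a_2 c_1).$$
Expanding both sides of the first equality produces terms of the shape $\langle x, yz\rangle$ or $\langle x\bar z, y\rangle$ in $\A_n$. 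We then match them pairwise:
\begin{itemize}
\item $\langle a_1,b_1c_1\rangle = \langle a_1\bar c_1,b_1\rangle$, by the induction hypothesis;
\item $\gamma_n\langle a_1,\bar c_2b_2\rangle = \gamma_n\langle c_2a_1,b_2\rangle$, by the second (left) form of the hypothesis;
\item $-\gamma_n\langle a_2,c_2b_1\rangle = -\gamma_n\langle \bar c_2 a_2... \rangle$, more precisely this should be paired with $-\gamma_n\langle c_2a_2, b_1\rangle$.
\end{itemize}
The last pairing needs care. We use $\langle a_2,c_2b_1\rangle = \langle \bar c_2 a_2, b_1\rangle$, which does not match $\langle c_2a_2,b_1\rangle$ directly. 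So the sign conventions must be rechecked.

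The main obstacle is exactly this bookkeeping of the cross terms. The fourth term, $-\gamma_n\langle a_2, b_2\bar c_1\rangle = -\gamma_n\langle a_2 c_1, b_2\rangle$, matches by the induction hypothesis. The cross terms involving $c_2$, however, require converting between $\langle x, yz\rangle$ and $\langle x, zy\rangle$-type expressions. For this we use that the form restricted to pure parts satisfies $\langle x,\bar y\rangle$-symmetries, and we split $c_2 = \Re(c_2) + \Im(c_2)$. The real part commutes with everything and can be moved freely. For the pure part we have $\bar c_2' = -c_2'$, which supplies the missing sign. We expect the term $-\gamma_n\langle a_2, c_2 b_1\rangle$ to combine with $-\gamma_n\langle c_2 a_2, b_1\rangle$ after applying $\langle x,yz\rangle=\langle \bar y x,z\rangle$ together with $\langle x,y\rangle=\langle y,x\rangle$. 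If this splitting becomes unwieldy, we will instead cite the known result in~\cite[Lemma 4.2]{our_split-algebras}, whose proof is precisely this induction.
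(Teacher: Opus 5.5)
\textbf{There is a gap in the inductive step.} Your overall plan is the standard one: induction on $n$, the second equality obtained from the first by conjugation, and the four terms of the expansion matched via the inductive hypothesis. The paper gives no proof of its own; it only cites \cite{moreno} and \cite{our_split-algebras}. Your conjugation argument is correct. However, the inductive step as written does not close, because of a slip in computing $a\bar c$.

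With $\bar c = (\bar c_1, -c_2)$, the rule $(x_1,x_2)(y_1,y_2) = (x_1y_1 + \gamma_n \bar y_2 x_2,\; y_2x_1 + x_2\bar y_1)$ puts $\overline{-c_2} = -\bar c_2$ into the first component. So
\[
a\bar c = \bigl(a_1\bar c_1 - \gamma_n \bar c_2 a_2,\; -c_2a_1 + a_2c_1\bigr),
\]
not $a_1\bar c_1 - \gamma_n c_2 a_2$. With the bar restored, the third term matches at once: $-\gamma_n\langle a_2, c_2b_1\rangle = -\gamma_n\langle \bar c_2 a_2, b_1\rangle$, by the left form $\langle x, yz\rangle = \langle \bar y x, z\rangle$ of the hypothesis in $\A_n$. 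You may use that form, since the first equality in $\A_n$ implies the second by your conjugation argument. Your first instinct in the third bullet, $\langle \bar c_2 a_2, \cdot\rangle$, was the right one. Together with your other three pairings, which are correct, this completes the proof.

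\textbf{Your proposed repair would not work.} If the first component really were $a_1\bar c_1 - \gamma_n c_2 a_2$, the two sides would differ by
\[
\gamma_n\langle (c_2-\bar c_2)a_2, b_1\rangle = 2\gamma_n\langle \Im(c_2)a_2, b_1\rangle,
\]
which is nonzero in general. Splitting $c_2$ into real and pure parts only shows that the pure part enters the two sides with opposite signs. That exhibits the discrepancy rather than removing it. Falling back on citing \cite{our_split-algebras} is not a proof either. The fix is simply to correct the bar in $a\bar c$; no splitting of $c_2$ is needed.
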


We now define two important sequences of Cayley--Dickson algebras.

\begin{definition}
\leavevmode
\begin{itemize}
	\item It is said that the algebra $\A_n \{ \gamma_0, \dots, \gamma_{n-1} \}$ is an algebra of {\em the main sequence} if $\gamma_k = -1$ for each $k = 0, \dots, n-1$. We denote this algebra by~$\Main_n$.
	\item The algebra $\A_n \{ \gamma_0, \dots, \gamma_{n-1} \}$ is called {\em a Cayley--Dickson split-algebra} if $\gamma_k = -1$ for each $k = 0, \dots, n-2$ and $\gamma_{n-1} = 1$.  We denote it by $\Hyp_n$, since the norm in $\Hyp_n$ appears to be hyperbolic.
\end{itemize}
\end{definition}

Clearly, $\Main_n$ and $\Hyp_n$ differ by the last parameter only. In other words, $\Main_n = \Main_{n-1} \{ -1 \}$ and $\Hyp_n = \Main_{n-1} \{ 1 \}$.

\begin{proposition}[{\cite[Proposition~3.31]{our_split-algebras}}] \label{proposition:A_n-euclidean-product}
\leavevmode
\begin{itemize}
	\item Let $a = \sum\limits_{j=0}^{2^n-1} a_j e^{(n)}_j, \ \ b = \sum\limits_{j=0}^{2^n-1} b_j e^{(n)}_j \in \Main_n$. Then $\langle a, b \rangle = \sum\limits_{j=0}^{2^n-1} a_j b_j$ is a Euclidean inner product. Particularly, $n(a) = \sum\limits_{j=0}^{2^n-1} a_j^2$, so $n(a)=0$ if and only if $a=0$.
	\item Let $a = \sum\limits_{j=0}^{2^n-1} a_j e^{(n)}_j, \ \ b = \sum\limits_{j=0}^{2^n-1} b_j e^{(n)}_j \in \Hyp_n$. Then $\langle a, b \rangle = \sum\limits_{j=0}^{2^{n-1}-1} a_j b_j - \sum\limits_{j=2^{n-1}}^{2^n-1} a_j b_j$.
\end{itemize}
\end{proposition}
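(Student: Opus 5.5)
The statement to prove is Proposition~\ref{proposition:A_n-euclidean-product}: an explicit formula for $\langle a,b\rangle$ in $\Main_n$ and in $\Hyp_n$ in terms of coordinates. I would first prove a general diagonal formula for an arbitrary $\A_n\{\gamma_0,\dots,\gamma_{n-1}\}$, by induction on $n$, and then specialize the parameters. The general claim is that the standard basis $\E_n$ is orthogonal with respect to $\langle\cdot,\cdot\rangle$, and that
$$
n\big(e^{(n)}_j\big) = \prod_{k \in S(j)} (-\gamma_k),
$$
where $S(j)$ is the set of positions of the binary digits of $j$ equal to $1$.

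The base case $n=0$ is trivial, since $\A_0=\mathbb{R}$, $n(a)=a^2$ and $\langle a,b\rangle=ab$. For the inductive step, I polarize the norm formula $n((a,b)) = n(a)-\gamma_n n(b)$ from the proposition quoted from~\cite{schafer}. The form $\langle x,y\rangle$ is the symmetric bilinear form with $\langle x,x\rangle=n(x)$, so by Proposition~\ref{proposition:lambda-form} it equals $\tfrac12\big(n(x+y)-n(x)-n(y)\big)$. Applying this to $x=(a,b)$ and $y=(c,d)$, and using additivity of the Cayley--Dickson construction, gives
$$
\langle (a,b),(c,d)\rangle = \langle a,c\rangle - \gamma_n\langle b,d\rangle .
$$

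From this identity both parts of the induction follow. Basis elements lying in different halves, $(e_i,0)$ and $(0,e_j)$, are orthogonal. Orthogonality within each half follows from the inductive hypothesis. The norms satisfy $n((e_j,0))=n(e_j)$ and $n((0,e_j))=-\gamma_n n(e_j)$, which is exactly the product formula with the new binary digit included.

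For $\Main_n$, every $\gamma_k=-1$, so each basis element has norm $1$. Hence $\langle a,b\rangle=\sum a_jb_j$ and $n(a)=\sum a_j^2$, and $n(a)=0$ forces $a=0$, so the form is Euclidean. For $\Hyp_n=\Main_{n-1}\{1\}$, I apply the one-step identity once more: $\langle(a,b),(c,d)\rangle=\langle a,c\rangle-\langle b,d\rangle$ with $a,b,c,d\in\Main_{n-1}$. The Euclidean formula for $\Main_{n-1}$ then yields the stated signed sum.

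There is no real obstacle in this argument. The only step that needs care is justifying the polarization. This amounts to checking that $\langle\cdot,\cdot\rangle$ is determined by $n$ via $2\langle a,b\rangle=a\bar b+b\bar a$. It also requires that the involution on $\A_{n+1}$, $(\overline{a,b})=(\bar a,-b)$, makes the cross terms combine correctly; one can compute $(a,b)\overline{(c,d)}+(c,d)\overline{(a,b)}$ directly and verify that the second component vanishes.
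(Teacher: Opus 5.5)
Your argument is correct. The paper itself gives no proof of this proposition: it is simply cited from \cite[Proposition~3.31]{our_split-algebras}. So there is no in-paper argument to compare against, and your induction supplies a self-contained one.

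The key step is polarizing Schafer's recursion $n((a,b)) = n(a) - \gamma_n n(b)$ to obtain $\langle (a,b),(c,d)\rangle = \langle a,c\rangle - \gamma_n \langle b,d\rangle$. This needs only two facts: $\langle\cdot,\cdot\rangle$ is the polar form of $n$ (Proposition~\ref{proposition:lambda-form}), and addition in $\A_{n+1}$ is componentwise. The direct check you mention at the end is therefore optional. If you do carry it out, you get $(a,b)\overline{(c,d)}+(c,d)\overline{(a,b)} = \bigl(2\langle a,c\rangle - 2\gamma_n\langle b,d\rangle,\,0\bigr)$, which agrees.

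Your more general formulation, for arbitrary parameters, proves more than is asked: $\E_n$ is orthogonal and $n(e^{(n)}_j)=\prod_{k\in S(j)}(-\gamma_k)$. These are exactly facts the paper later uses without comment in a general $\A_n$, where the stated proposition (only for $\Main_n$ and $\Hyp_n$) does not apply. Examples are:
\begin{itemize}
\item $b\perp a$ for distinct $a,b\in\E'_n$ in Lemma~\ref{lemma:type-1-element};
\item $n(e^{(n)}_j)=\pm 1$ in Lemma~\ref{lemma:basis-pairs-zero-divisors-relation};
\item $n(\til{e}_0)=-\gamma_{n-1}$ in Section~\ref{section:alternative-subalgebras}.
\end{itemize}
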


\begin{example}
\leavevmode
\begin{itemize}
	\item The complex numbers $\mathbb{C}$, the quaternions $\mathbb{H}$, the octonions $\mathbb{O}$, and the sedenions $\mathbb{S}$ are the algebras of the main sequence for $n=1,\:2,\:3,$ and $4$, correspondingly, cf.~\cite{baez}.
	\item The split-complex numbers $\hat{\mathbb{C}}$, the split-quaternions $\hat{\mathbb{H}}$, the split-octonions $\hat{\mathbb{O}}$, and the split-sedenions $\hat{\mathbb{S}}$ are the split-algebras for $n=1,\:2,\:3,$ and $4$, correspondingly, see~\cite{bentz, our_split-sedenions}.
\end{itemize}
\end{example}

The following lemma describes the anticentralizer of an arbitrary element of $\A_n$.

\begin{lemma}[{\cite[Lemma 5.8]{our_anticomm}}] \label{lemma:A_n-anticomm}
Let $a \in \A_n$, $a \neq 0$.
\begin{enumerate}
\item If $\Re(a) \neq 0$, $n(a) \neq 0$, then $\Anc_{\A_n}(a) = \{ 0 \}$.
\item If $\Re(a) \neq 0$, $n(a)=0$, then $\Anc_{\A_n}(a) = \mathbb{R}\bar{a}$.
\item If $\Re(a) = 0$, then $\Anc_{\A_n}(a) = \left\{ b \in \A_n \; | \; \Re(b) = 0 \mbox{ and } \langle a,b \rangle = 0 \right\}$. \label{item:A_n-anticomm-pure}
\end{enumerate}
\end{lemma}

We now proceed to some concepts related to associativity. For $a,b,c \in \A$ we denote their associator by $[a,b,c] = (ab)c - a(bc)$, and their anti-associator by $\{ a,b,c \} = (ab)c + a(bc)$. An algebra $\A$ is called {\em flexible} if for all $a,b \in \A$ the equality $[a,b,a] = 0$ holds. Clearly, in a flexible algebra $\A$, we have $[a,b,c]=-[c,b,a]$ for all $a,b,c \in \A$. An algebra $\A$ is called {\em alternative} if for all $a,b \in \A$ the equalities $[a,a,b] = [b,a,a] = 0$ hold.

It is well known that $\A_n$ is alternative if and only if $n \leq 3$. However, $\A_n$ is always flexible, see, e.g.,~\cite[p. 436, Theorem~1]{schafer}.

\begin{definition}[{\cite[p. 12, p. 15]{moreno_alternative}}]
Let $a, b \in \A_n$.
\begin{itemize}
    \item We say that $a$ {\em alternates} with $b$ if $[a,a,b] = 0$.
    \item If $a$ alternates with every $b \in \A_n$, then $a$ is {\em alternative}.
    \item We say that $a$ {\em alternates strongly} with $b$ if $[a,a,b] = 0$ and $[b,b,a] = 0$.
    \item If $a$ alternates strongly with every $b \in \A_n$, then $a$ is {\em strongly alternative}.
\end{itemize}
\end{definition}

\begin{lemma}[{\cite[Lemma 4.8]{our_split-algebras}}] \label{lemma:alternative-elements-are-normed}
Let $a, b \in \A_n$, $a$ alternates with $b$. Then $n(ab) = n(ba) = n(a)n(b)$.
\end{lemma}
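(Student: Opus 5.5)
We must prove: if $a$ alternates with $b$, i.e. $[a,a,b]=0$, then $n(ab)=n(ba)=n(a)n(b)$. The plan is to compute $n(ab)=\langle ab,ab\rangle$ by repeatedly moving factors across the bilinear form with Lemma~\ref{lemma:inner-product-movement}. Then we use the hypothesis $(aa)b=a(ab)$, rewritten in terms of $\bar a$, to collapse the expression.

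First we translate the hypothesis. Write $\bar a = 2\Re(a) - a$, which is possible since the involution is regular. Then $[\bar a, a, b] = 2\Re(a)[1,a,b] - [a,a,b] = 0$, because associators with a scalar vanish. Hence $\bar a(ab) = (\bar a a)b = n(a)b$. Similarly, flexibility gives $[b,a,a]=-[a,a,b]=0$, so $(ba)a = b(aa)$. Arguing as before, $(ba)\bar a = b(a\bar a) = n(a)b$.

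Next we compute the norms. By Lemma~\ref{lemma:inner-product-movement} in the form $\langle x, yz\rangle = \langle \bar y x, z\rangle$, applied with $x=ab$, $y=a$, $z=b$, we get
\[
n(ab)=\langle ab, ab\rangle = \langle \bar a(ab), b\rangle = \langle n(a) b, b\rangle = n(a)n(b).
\]
For $ba$ we use the other form, $\langle x, yz\rangle = \langle x\bar z, y\rangle$, with $x=ba$, $y=b$, $z=a$:
\[
n(ba)=\langle ba, ba\rangle = \langle (ba)\bar a, b\rangle = n(a)\langle b,b\rangle = n(a)n(b).
\]

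The only delicate point is the second step. We must check that flexibility really converts left alternation into right alternation, $[b,a,a]=0$. This follows since $[x,y,z]=-[z,y,x]$ holds in any flexible algebra, which is noted above. We must also use the correct variant of the movement lemma in each case. Neither step needs a norm that is positive definite, so the argument works for arbitrary parameters $\gamma_k$.
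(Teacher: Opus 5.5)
Your proof is correct and complete. You convert $[a,a,b]=0$ into $\bar a(ab)=n(a)b$ and, via flexibility, $(ba)\bar a=n(a)b$, and then move factors across $\langle\cdot,\cdot\rangle$ with Lemma~\ref{lemma:inner-product-movement}; no definiteness of the norm is needed, so it holds for arbitrary $\gamma_k$. The paper does not prove this lemma itself but cites it from~\cite[Lemma~4.8]{our_split-algebras}, stating just before it the same inner-product-movement lemma from that source, which is the tool you use, so yours is the natural self-contained route.
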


\section{Hexagons arising from alternativity}

\subsection{Hexagons in zero divisor graphs}

\begin{notation}
In the statements~\ref{lemma:strongly-alternative-system}~-~\ref{proposition:hexagon-elements} we assume that $a,b \in \A_n$ alternate strongly with $c,d \in \A_n$, and $(a,b)(c,d) = 0$ in $\A_{n+1}$.
\end{notation}

\begin{lemma} \label{lemma:strongly-alternative-system}
The elements $ac,da$ alternate strongly with $a,b,c,d$.
\end{lemma}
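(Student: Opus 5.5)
The plan is to reduce everything to statements about the subalgebra generated by just two elements. The orthogonality condition $(a,b)(c,d)=0$ in $\A_{n+1}$ reads
$$
ac+\gamma_n \bar{d}b=0, \qquad da+b\bar{c}=0 .
$$
So $ac=-\gamma_n\bar{d}b$ and $da=-b\bar{c}$. Thus each of the two products can be written both as a product of two of the elements $a,b,c,d$ and, possibly after conjugating one factor, as a product of the complementary pair:
\begin{itemize}
\item $ac$ lies in the subalgebra generated by $\{a,c\}$ and also in the one generated by $\{b,d\}$;
\item $da$ lies in the subalgebra generated by $\{a,d\}$ and also in the one generated by $\{b,c\}$.
\end{itemize}
The hypothesis says that each of $a,b$ alternates strongly with each of $c,d$. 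Hence every one of the four pairs $\{a,c\},\{b,d\},\{a,d\},\{b,c\}$ consists of two elements alternating strongly with each other. Conjugation stays inside such a subalgebra, since $\bar{x}=2\Re(x)-x$. So the lemma follows from the following claim.

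\emph{Claim.} If $x,y\in\A_n$ alternate strongly, then the subalgebra $B(x,y)$ they generate is associative.

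Indeed, given the claim, take $u$ in $B(x,y)$ and $z\in\{x,y\}$. All of $u,z$ lie in one associative algebra, so $[u,u,z]=[z,z,u]=0$, i.e.\ $u$ alternates strongly with $z$. Apply this with $u=ac$ and the pairs $\{a,c\}$, $\{b,d\}$, and with $u=da$ and the pairs $\{a,d\}$, $\{b,c\}$.

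To prove the claim, write $x=\alpha+x'$ and $y=\beta+y'$ with $x',y'$ pure. Real scalars lie in the nucleus, so $x',y'$ still alternate strongly, and $B(x,y)=B(x',y')$. I will show that $V=\operatorname{span}\{1,x',y',x'y'\}$ is closed under multiplication, listing the products of basis elements:
\begin{itemize}
\item Since $\bar{x}'=-x'$ and the involution is regular, $x'^2=-n(x')$ and $y'^2=-n(y')$.
\item Since $x'$ and $y'$ are pure, $x'y'+y'x'=-2\langle x',y'\rangle=:s\in\mathbb{R}$, so $y'x'=s-x'y'$.
\item $[x',x',y']=0$ gives $x'(x'y')=x'^2y'=-n(x')y'$.
\item $[y',y',x']=0$, together with flexibility, gives $(x'y')y'=-n(y')x'$.
\item Substituting $y'x'=s-x'y'$ into these yields $(x'y')x'=sx'+n(x')y'$ and $y'(x'y')=sy'+n(y')x'$.
\item $(x'y')^2$ is obtained from $x'y'$ having real part $\Re(x'y')=\langle x'y',1\rangle$ and regular involution.
\end{itemize}
Hence $V=B(x',y')$ is at most $4$-dimensional, with the same multiplication table as a (generalized, possibly degenerate) quaternion algebra.

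The step I expect to be the main obstacle is verifying associativity of $V$ from this table. The associator is trilinear and vanishes whenever an argument is $1$, so it suffices to check the $27$ associators $[p,q,r]$ with $p,q,r\in\{x',y',x'y'\}$. Several of these are immediate from the relations above, and flexibility gives $[p,q,r]=-[r,q,p]$, which halves the rest. The remaining ones are routine substitutions using the listed products.

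If this computation becomes unwieldy, I would instead map the generic quaternion algebra onto $V$, sending $i\mapsto x'$ and $j\mapsto y'-\lambda x'$ with $\lambda$ chosen so that the images anticommute. One then checks that the defining relations are preserved. This shows $V$ is a homomorphic image of an associative algebra, in the spirit of the homomorphisms promised in Section~\ref{section:alternative-subalgebras}, and the degenerate norm case causes no division problems.
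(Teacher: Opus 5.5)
Your argument is correct, but it is organized differently from the paper's.

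\textbf{The paper's route.} It computes associators directly. From $ac=-\gamma_n\bar d b$ and $da=-b\bar c$ it kills $[a,a,ac]$, $[b,b,ac]$ and $[ac,ac,a]$ using three ingredients: flexibility; adjacent cancellations such as $\bar a(ac)=n(a)c$ and $(\bar d b)\bar b=n(b)\bar d$; and $n(ac)=n(a)n(c)$. The other cases are left as ``similar''.

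\textbf{Your route.} You isolate a structural fact: two strongly alternating elements generate an associative subalgebra. This is a version of Lemma~\ref{lemma:nonspecial-quaternionic-subalgebra} without its purity and orthogonality hypotheses. After that, the observations $ac\in B(a,c)\cap B(b,d)$ and $da\in B(a,d)\cap B(b,c)$ settle all eight conditions at once. This includes cases like $[ac,ac,b]$ or $[ac,ac,c]$, which the paper does not write out and where the factors no longer cancel adjacently.

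\textbf{The cost, and two details to fix.} The price is proving that $V=\Lin(e_0,x',y',x'y')$ is associative.
\begin{enumerate}
\item Your entry $(x'y')^2=s\,x'y'-n(x'y')$ must be combined with $n(x'y')=n(x')n(y')$ from Lemma~\ref{lemma:alternative-elements-are-normed}. Without it the check fails, since for instance $[x'y',x',y']=n(x'y')-n(x')n(y')$. With it, your structure constants on $e_0,x',y',x'y'$ coincide with those of the associative algebra with basis $1,i,j,ij$ and relations $i^2=-n(x')$, $j^2=-n(y')$, $ij+ji=s$ (a Clifford algebra of a binary quadratic form). So $V$ is its homomorphic image, and no orthogonalization or case analysis is needed.
\item Your fallback $j\mapsto y'-\lambda x'$ needs $n(x')\neq 0$ whenever $s\neq 0$. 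In split algebras both $x'$ and $y'$ can be null with $\langle x',y'\rangle\neq 0$; for example, take $x'=e_1+e_2$ and $y'=e_1-e_2$ in $\Hyp_2$. So, contrary to your remark, the degenerate case does cause a division problem on that route, and the presentation above is the way to finish.
\end{enumerate}
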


\begin{proof}
We have $(a,b)(c,d) = (ac + \gamma_n \bar{d}b, da + b\bar{c}) = 0$, so $ac = -\gamma_n \bar{d}b$ and $da = -b\bar{c}$. Then
\begin{align*}
[a,a,ac] &= -[a,\bar{a},ac] = -(a\bar{a})(ac) + a(\bar{a}(ac)) =\\
&= -(a\bar{a})(ac) + a((\bar{a}a)c) = -n(a)ac + n(a)ac = 0,\\
[b,b,ac] &= [b,b,-\gamma_n \bar{d}b] = \gamma_n[\bar{d}b,b,b] = -\gamma_n[\bar{d}b,\bar{b},b] = 0.
\end{align*}
Similarly, all the elements $a,b,c,d$ alternate with $ac, ad$. Conversely,
\begin{align*}
    [ac,ac,a] &= -[ac, \overline{ac}, a] = -((ac)(\overline{ac}))a + (ac)((\bar{c}\bar{a})a) =\\
    &= -n(ac)a + (ac)(\bar{c}(\bar{a}a)) = -n(ac)a + n(a)(ac)\bar{c} =\\
    &= -n(ac)a + n(a)a(c\bar{c}) = -n(ac)a + n(a)n(c)a = 0,
\end{align*}
since it follows from Lemma~\ref{lemma:alternative-elements-are-normed} that $n(ac) = n(a)n(c)$. Thus it can be shown that $ac, ad$ alternate with $a,b,c,d$.
\end{proof}

\begin{lemma} \label{lemma:A_n-next-pair}
Let $n(c) - \chi \gamma_n n(d) = \chi n(c) - \gamma_n n(d) = 0$ for some $\chi \in \mathbb{R}$. Then $(c,d)(\overline{ac},-\chi da)=0$.
\end{lemma}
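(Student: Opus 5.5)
The plan is to expand the product directly with the Cayley--Dickson multiplication rule and then reduce each component to a scalar multiple of $\bar{a}$ or $b$. The coefficients should turn out to be exactly the two expressions $n(c) - \chi\gamma_n n(d)$ and $\chi n(c) - \gamma_n n(d)$ that the hypothesis makes vanish. By Definition~\ref{definition:Cayley--Dickson-algebras}, with $\overline{\overline{ac}} = ac$ and $\overline{-\chi da} = -\chi\,\bar{a}\bar{d}$,
\begin{equation*}
(c,d)(\overline{ac},-\chi da) = \bigl(c(\bar{c}\bar{a}) - \gamma_n\chi(\bar{a}\bar{d})d,\; -\chi(da)c + d(ac)\bigr).
\end{equation*}

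First I will record the associator identities needed, all derived from strong alternativity of $a,b$ with $c,d$ together with flexibility of $\A_n$. Since $\bar{x} = 2\Re(x) - x$ and associators vanish when an argument is real, we have $[x,\bar{x},y] = -[x,x,y]$ and $[x,x,\bar{y}] = -[x,x,y]$. Flexibility gives $[y,x,x] = -[x,x,y]$.

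From $[c,c,a]=0$ we get $[c,\bar{c},\bar{a}]=0$, hence $c(\bar{c}\bar{a}) = n(c)\bar{a}$. From $[d,d,a]=0$ and flexibility, $[\bar{a},d,d]=0$, hence $[\bar{a},\bar{d},d]=0$ and $(\bar{a}\bar{d})d = n(d)\bar{a}$. So the first component equals $(n(c) - \gamma_n\chi n(d))\bar{a} = 0$.

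For the second component I will use the relations extracted in the proof of Lemma~\ref{lemma:strongly-alternative-system} from $(a,b)(c,d)=0$, namely $ac = -\gamma_n\bar{d}b$ and $da = -b\bar{c}$. Then $d(ac) = -\gamma_n d(\bar{d}b) = -\gamma_n n(d)b$, using $[d,\bar{d},b] = -[d,d,b] = 0$. Also $(da)c = -(b\bar{c})c = -n(c)b$, using $[b,\bar{c},c] = -[b,c,c] = [c,c,b] = 0$. Hence the second component is $(\chi n(c) - \gamma_n n(d))b = 0$.

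The only delicate point is making sure each reassociation is justified by the correct one of the available conditions: left alternation $[x,x,y]=0$, and its right-sided version obtained via flexibility. Once these identities are in place, the rest is direct substitution.
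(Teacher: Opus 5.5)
Your proposal is correct and follows the paper's proof essentially step for step. The paper also expands the product directly and substitutes $da=-b\bar{c}$ and $ac=-\gamma_n\bar{d}b$ in the second component. It then reassociates to get $\bigl((n(c)-\chi\gamma_n n(d))\bar{a},\,(\chi n(c)-\gamma_n n(d))b\bigr)=0$. The only difference is that you spell out the associator identities (from strong alternativity plus flexibility) that justify each reassociation, whereas the paper leaves them implicit.
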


\begin{proof}
Indeed, we have
\begin{align*}
    (c,d)(\overline{ac},-\chi da) &= \left( c(\overline{ac}) + \gamma_n (\overline{-\chi da})d, (-\chi da)c + d(ac) \right) =\\
    &= \left(c(\bar{c}\bar{a}) - \chi \gamma_n (\bar{a}\bar{d})d, \chi (b\bar{c})c - \gamma_n d(\bar{d}b) \right) =\\
    &= \left((c\bar{c})\bar{a} - \chi \gamma_n \bar{a}(\bar{d}d), \chi b(\bar{c}c) - \gamma_n (d\bar{d})b \right) =\\
    &= \left((n(c) - \chi \gamma_n n(d)) \bar{a}, (\chi n(c) - \gamma_n n(d)) b \right) = 0. \tag*{\qedhere}
\end{align*}
\end{proof}

\begin{remark} \label{remark:norm-condition}
If $n(c) = n(d) = 0$, then we can take any $\chi \in \mathbb{R}$ in Lemma~\ref{lemma:A_n-next-pair}. Otherwise, we obtain immediately 
\begin{equation*}
\begin{cases}
(n(c))^2 = (n(d))^2 \neq 0;\\
\chi = \gamma_n \dfrac{n(c)}{n(d)} = \gamma_n \dfrac{n(d)}{n(c)} = \pm 1.
\end{cases}
\tag{\textasteriskcentered} \label{equation:norm-condition}
\end{equation*}
\end{remark}

Condition~\eqref{equation:norm-condition} is satisfied automatically if $\A_{n+1}$ is an algebra of the main sequence, see~\cite[pp.~25-27]{moreno}, or if $\A_{n+1}$ is a Cayley--Dickson split-algebra, see~\cite[Lemma~4.1]{our_split-algebras}. In the original statements, $n(c) = n(d)$ if the elements $c$ and $d$ are alternative in $\A_n$. However, the proofs are valid verbatim if $c,d \in \A_n$ alternate (not strongly) with $a,b \in \A_n$. Hence the values of $\chi$ are equal to $-1$ and $1$, respectively.

In an arbitrary real Cayley--Dickson algebra $\A_{n+1}$, condition~\eqref{equation:norm-condition} holds for any element whose components are both standard basis elements up to sign. We will study such elements in Section~\ref{section:basis-pairs}. However, condition~\eqref{equation:norm-condition} is not true in general, see~\cite[Example~4.17]{our_split-algebras}.

\begin{proposition} \label{proposition:self-orthogonality}
If $(a,b) \in \A_{n+1}$ is pure and satisfies condition~\eqref{equation:norm-condition} with $\chi = 1$, then $(a,b)$ is orthogonal to itself.
\end{proposition}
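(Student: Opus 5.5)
Since $(a,b)$ is pure, we have $\overline{(a,b)} = -(a,b)$, so the self-product equals $-(a,b)\overline{(a,b)} = -n((a,b))$. Therefore it suffices to show $n((a,b)) = 0$; orthogonality of $(a,b)$ to itself then follows, because $xx = 0$ gives both $xy=0$ and $yx=0$ with $y = x$.

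The plan is to compute the norm with the inductive formula from the Proposition citing \cite[p.~435]{schafer}: $n((a,b)) = n(a) - \gamma_n n(b)$. Here condition~\eqref{equation:norm-condition} is read for the pair $(a,b)$ in place of $(c,d)$, so that $n(a) - \chi\gamma_n n(b) = 0$ (this is the form in Lemma~\ref{lemma:A_n-next-pair}). With $\chi = 1$ this is exactly $n(a) - \gamma_n n(b) = 0$, hence $n((a,b)) = 0$.

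If one prefers to use the explicit form~\eqref{equation:norm-condition}, the argument is similar. From $\gamma_n n(a)/n(b) = 1$ we get $n(a) = \gamma_n n(b)$, using $\gamma_n^2 = 1$ and $\gamma_n = \pm 1$. Hence $n((a,b)) = n(a) - \gamma_n n(b) = 0$.

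As a cross-check, the product can also be computed directly: $(a,b)(a,b) = (a^2 + \gamma_n \bar b b,\ ba + b\bar a)$. Purity gives $\bar a = -a$, so the second component is $ba - ba = 0$. The first component is $-n(a) + \gamma_n n(b) = 0$, using $a^2 = -a\bar a = -n(a)$ and $\bar b b = n(b)$.

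There is no real obstacle in this argument. The only point requiring care is the interpretation of ``satisfies condition~\eqref{equation:norm-condition}'' for the element $(a,b)$ itself, together with the degenerate case $n(a) = n(b) = 0$ allowed by Remark~\ref{remark:norm-condition}. In that case the norm vanishes trivially, and the conclusion holds as well.
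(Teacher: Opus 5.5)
Your proof is correct and is essentially the paper's argument. Condition~\eqref{equation:norm-condition} with $\chi = 1$ gives $n(a) = \gamma_n n(b)$, hence $n((a,b)) = n(a) - \gamma_n n(b) = 0$, and purity then yields $(a,b)(a,b) = -(a,b)\overline{(a,b)} = -n((a,b)) = 0$; your componentwise cross-check and your remark on the degenerate case $n(a) = n(b) = 0$ are valid extras but not needed.
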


\begin{proof}
By definition, $n((a,b)) = n(a) - \gamma_n n(b) = \gamma_n n(b) - \gamma_n n(b) = 0$, so $(a,b)(a,b) = -(a,b)\overline{(a,b)} = -n((a,b)) = 0$.
\end{proof}

\begin{remark} \label{remark:condition-asterisk}
Let $(c,d)$ satisfy condition~\eqref{equation:norm-condition} and $(n(a))^2 + (n(b))^2 \neq 0$. Assume without loss of generality that $n(b) \neq 0$. By Lemma~\ref{lemma:alternative-elements-are-normed}, we have
$$
n(a)n(d) = n(da) = n(-b\bar{c}) = n(b)n(\bar{c}) = n(b)n(c).
$$
Then $\gamma_n \dfrac{n(a)}{n(b)} = \gamma_n \dfrac{n(c)}{n(d)} = \chi$. Moreover, Lemma~\ref{lemma:alternative-elements-are-normed} implies that
$$
\gamma_n \dfrac{n(\overline{ac})}{n(-\chi da)} = \gamma_n \dfrac{n(ac)}{n(da)} = \gamma_n \dfrac{n(a)n(c)}{n(a)n(d)} = \gamma_n \dfrac{n(c)}{n(d)} = \chi.
$$
Hence $(a,b)$ and $(\overline{ac},-\chi da)$ also satisfy condition~\eqref{equation:norm-condition}.
\end{remark}

\begin{corollary} \label{corollary:A_n-double-hexagon}
Let $(a,b)$ and $(c,d)$ satisfy condition~\eqref{equation:norm-condition}. Then there exists the following $6$-cycle in $\Gamma_Z(\A_{n+1})$:
$$
(a,b) \rightarrow (c,d) \rightarrow (\overline{ac},-\chi da) \rightarrow (a,-b) \rightarrow (c,-d) \rightarrow (\overline{ac}, \chi da) \rightarrow (a,b).
$$
\end{corollary}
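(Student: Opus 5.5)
We verify the six directed edges one at a time. We also check that each vertex is a nonzero line, or at least that consecutive vertices are distinct; these degenerate cases are not really part of the content of the statement. Throughout, the standing Notation holds: $a,b$ alternate strongly with $c,d$, and $(a,b)(c,d)=0$. This gives the two relations
$$
ac=-\gamma_n\bar d b, \qquad da=-b\bar c .
$$

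\textbf{The edges given by Lemma~\ref{lemma:A_n-next-pair}.} The first edge $(a,b)\to(c,d)$ is the hypothesis. The second edge $(c,d)\to(\overline{ac},-\chi da)$ is exactly Lemma~\ref{lemma:A_n-next-pair}, which applies because $(c,d)$ satisfies condition~\eqref{equation:norm-condition}. For the fifth edge $(c,-d)\to(\overline{ac},\chi da)$, note that the pair $(a,-b),(c,-d)$ satisfies all the standing hypotheses. Indeed,
$$
(a,-b)(c,-d)=(ac+\gamma_n\bar d b,\,-(da+b\bar c))=0,
$$
strong alternativity is unaffected by signs, and $(c,-d)$ still satisfies condition~\eqref{equation:norm-condition} with the same $\chi$. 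Applying Lemma~\ref{lemma:A_n-next-pair} to this pair gives $(c,-d)\bigl(\overline{ac},-\chi(-d)a\bigr)=0$, which is the fifth edge. The fourth edge $(a,-b)\to(c,-d)$ is the computation just displayed.

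\textbf{The edges closing the loop.} The remaining edges are $(\overline{ac},-\chi da)\to(a,-b)$ and $(\overline{ac},\chi da)\to(a,b)$. We compute directly:
$$
(\overline{ac},-\chi da)(a,-b)=\bigl(\bar c\bar a a-\gamma_n\chi\,\overline{b}\,(da),\; -\chi (da)\bar a\cdot\ldots\bigr).
$$
More carefully, by the multiplication rule the first component is $(\overline{ac})a+\gamma_n\overline{(-b)}(-\chi da)$ and the second is $(-b)(\overline{ac})+(-\chi da)\bar a$. Using $\overline{ac}=\bar c\bar a$, the first component becomes
$$
(\bar c\bar a)a+\gamma_n\chi\,\bar b(da).
$$
We now substitute $da=-b\bar c$, which turns it into
$$
(\bar c\bar a)a-\gamma_n\chi\,\bar b(b\bar c).
$$
Alternativity of $a$ with $c$ gives $(\bar c\bar a)a=n(a)\bar c$, and alternativity of $b$ with $c$ gives $\bar b(b\bar c)=n(b)\bar c$. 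Here we use the same "$x\bar x y$" manipulations as in the proof of Lemma~\ref{lemma:strongly-alternative-system}, based on $[x,x,y]=0$ and $\bar x=2\Re(x)-x$. The first component is therefore $(n(a)-\gamma_n\chi n(b))\bar c$. For the second component we substitute $\bar d b$ for the $ac$ term and use strong alternativity of $d$ with $a$ and $b$; this reduces it to $(\chi n(a)-\gamma_n n(b))\,(\text{something in } \bar d \text{ or } d)$.

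\textbf{Vanishing of the coefficients.} Remark~\ref{remark:condition-asterisk} shows that $(a,b)$ satisfies condition~\eqref{equation:norm-condition} with the same $\chi$. Hence $n(a)-\chi\gamma_n n(b)=\chi n(a)-\gamma_n n(b)=0$. When $(n(a))^2+(n(b))^2=0$ both coefficients vanish trivially. Either way, both components are zero. The last edge $(\overline{ac},\chi da)\to(a,b)$ follows by the same computation with $b,d$ replaced by $-b,-d$, just as the fifth edge was obtained from the second.

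The main obstacle is the bookkeeping in the second component of these closing edges. One has to choose which relation ($ac=-\gamma_n\bar d b$ or $da=-b\bar c$) to substitute so that the nonassociative product collapses via a single alternativity law. I would first try rewriting $(-b)(\bar c\bar a)$ using $\bar c\bar a=\overline{ac}=-\gamma_n\bar b d$, which gives $\gamma_n b(\bar b d)=\gamma_n n(b)d$. I would then rewrite $-\chi(da)\bar a=-\chi n(a)d$, so that the second component equals $(\gamma_n n(b)-\chi n(a))d$, which vanishes by the norm condition.
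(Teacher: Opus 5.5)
Your proof is correct, but you obtain the two closing edges differently from the paper.

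\textbf{The paper's route.} The paper derives every edge by iterating Lemma~\ref{lemma:A_n-next-pair}. It applies the lemma to the new pair $(c,d),(\overline{ac},-\chi da)$ and computes $\overline{c(\overline{ac})}=n(c)a$ and $-\chi(-\chi da)c=-n(c)b$. It then rescales by $n(c)\neq 0$ to land on $(a,-b)$, and repeats around the cycle, using $n(a)\neq 0$ at the next step. Each such step implicitly requires the new pair to inherit the standing hypotheses. That is what Lemma~\ref{lemma:strongly-alternative-system} supplies (strong alternativity of $ac,da$ with $a,b,c,d$). Remark~\ref{remark:condition-asterisk} supplies the norm condition for $(\overline{ac},-\chi da)$ with the same $\chi$.

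\textbf{Your route.} You multiply out $(\overline{ac},-\chi da)(a,-b)$ directly and get $\bigl((n(a)-\gamma_n\chi n(b))\bar c,\ (\gamma_n n(b)-\chi n(a))d\bigr)$. This uses only:
\begin{itemize}
\item the relations $ac=-\gamma_n\bar d b$ and $da=-b\bar c$;
\item alternativity among $a,b,c,d$ themselves, together with flexibility;
\item the fact that $(a,b)$ has the same $\chi$.
\end{itemize}
The second half of the cycle then comes for free from the symmetry $(b,d)\mapsto(-b,-d)$, which preserves both relations and all norms.

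\textbf{Comparison.} Your argument is more self-contained: Lemma~\ref{lemma:strongly-alternative-system} is never needed and no rescaling occurs. The paper's uniform iteration shows that the hexagon is the orbit of a single ``next pair'' operation. It also shows that every adjacent pair of vertices again satisfies the hypotheses of the lemma, a fact used later, e.g.\ in the proof of Theorem~\ref{theorem:neat-basis}.

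\textbf{Presentation.} Delete your first displayed expansion of $(\overline{ac},-\chi da)(a,-b)$. It has the wrong sign on the $\gamma_n\chi\,\bar b(da)$ term and a dangling ellipsis. Also move the second-component computation from your last paragraph into the main argument.
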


\begin{proof}
We obtain this cycle by applying Lemma~\ref{lemma:A_n-next-pair} successively:
\begin{itemize}
    \item $(a,b)(c,d)=0$ implies $(c,d)(\overline{ac},-\chi da)=0$;
    \item $\overline{c(\overline{ac})} = \overline{c(\bar{c}\bar{a})} = \overline{(c\bar{c})\bar{a}} = n(c)a$,\\
    $-\chi (-\chi da)c = (da)c = (-b\bar{c})c = -b(\bar{c}c) = -n(c)b$,\\
    and $n(c) \neq 0$, so $(c,d)(\overline{ac},-\chi da)=0$ implies $(\overline{ac},-\chi da)(a,-b)=0$;
    \item $\overline{(\overline{ac})a} = \overline{(\bar{c}\bar{a})a} = \overline{\bar{c}(\bar{a}a)} = n(a)c$,\\
    $-\chi (-b)(\overline{ac}) = \chi b (\overline{-\gamma_n\bar{d}b}) = -\chi \gamma_n b(\bar{b}d) = -\chi \gamma_n (b\bar{b})d = -\chi \gamma_n n(b)d = -n(a)d$,\\
    and $n(a) \neq 0$, so $(\overline{ac},-\chi da)(a,-b)=0$ implies $(a,-b)(c,-d)=0$;
    \item $(a,-b)(c,-d)=0$ implies $(c,-d)(\overline{ac},\chi da)=0$;
    \item $(c,-d)(\overline{ac},\chi da)=0$ implies $(\overline{ac},\chi da)(a,b)=0$. \qedhere
\end{itemize}
\end{proof}

\begin{proposition} \label{proposition:hexagon-elements}
If it is convenient, we can use that $(\overline{ac},-\chi da) = -\gamma_n(\bar{b}d, -\chi \gamma_n b\bar{c})$ and $(\overline{ac},\chi da) = -\gamma_n(\bar{b}d, \chi \gamma_n b\bar{c})$.
\end{proposition}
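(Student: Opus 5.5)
We use the two relations coming from $(a,b)(c,d)=0$, namely $ac = -\gamma_n \bar{d}b$ and $da = -b\bar{c}$, and conjugate them. The involution on $\A_n$ is an anti-automorphism, so $\overline{xy} = \bar{y}\bar{x}$, and it is real-linear. Hence
$$
\overline{ac} = \overline{-\gamma_n \bar{d}b} = -\gamma_n \bar{b}\,\bar{\bar{d}} = -\gamma_n \bar{b}d .
$$
This gives the first component of both identities at once.

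For the second component we use $da = -b\bar{c}$ directly. Then
$$
-\chi\, da = \chi\, b\bar{c} = -\gamma_n\bigl(-\chi\gamma_n b\bar{c}\bigr),
$$
where we used $\gamma_n^2 = 1$; recall we may assume $\gamma_n \in \{\pm 1\}$. Likewise
$$
\chi\, da = -\chi\, b\bar{c} = -\gamma_n\bigl(\chi\gamma_n b\bar{c}\bigr).
$$
Combining the two components componentwise, using $\alpha(x,y) = (\alpha x, \alpha y)$, yields
$$
(\overline{ac},\mp\chi da) = -\gamma_n(\bar{b}d, \mp\chi\gamma_n b\bar{c}),
$$
as claimed.

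There is no real obstacle here. The only points to check are that $\overline{xy}=\bar{y}\bar{x}$ holds in any Cayley--Dickson algebra, since the involution is a regular anti-involution, and the normalization $\gamma_n = \pm 1$. If one prefers not to normalize, the identity should instead be read with $\gamma_n^{-1}$ in place of $\gamma_n$ in the second component.
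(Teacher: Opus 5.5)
Your proof is correct and follows the paper's own argument: conjugate $ac = -\gamma_n \bar{d}b$ to obtain $\overline{ac} = -\gamma_n \bar{b}d$, substitute $da = -b\bar{c}$, and factor out $-\gamma_n$ using $\gamma_n^2 = 1$. The paper also relies on the normalization $\gamma_n = \pm 1$ at this step, but leaves it implicit; your explicit mention of it, and your observation that $\gamma_n^{-1}$ would appear in the second component without it, are both accurate.
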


\begin{proof}
Since $(a,b)(c,d) = (ac + \gamma_n \bar{d}b, da + b\bar{c}) = 0$, we have $\overline{ac} = -\overline{\gamma_n \bar{d}b} = -\gamma_n \bar{b}d$ and $da = -b\bar{c}$. Hence $(\overline{ac},-\chi da) = (-\gamma_n \bar{b}d, \chi b\bar{c}) = -\gamma_n(\bar{b}d, -\chi \gamma_n b\bar{c})$ and $(\overline{ac},\chi da) = (-\gamma_n \bar{b}d, -\chi b\bar{c}) = -\gamma_n(\bar{b}d, \chi \gamma_n b\bar{c})$.
\end{proof}

\begin{descript}
By using Corollary~\ref{corollary:A_n-double-hexagon} we obtain a subgraph of $\Gamma_Z(\A_{n+1})$ which we call a {\em hexagon}. It is depicted in Fig.~\ref{figure:directed-hexagon}.
\end{descript}

\begin{figure}[H]
\centering
\includegraphics[width=0.54\linewidth]{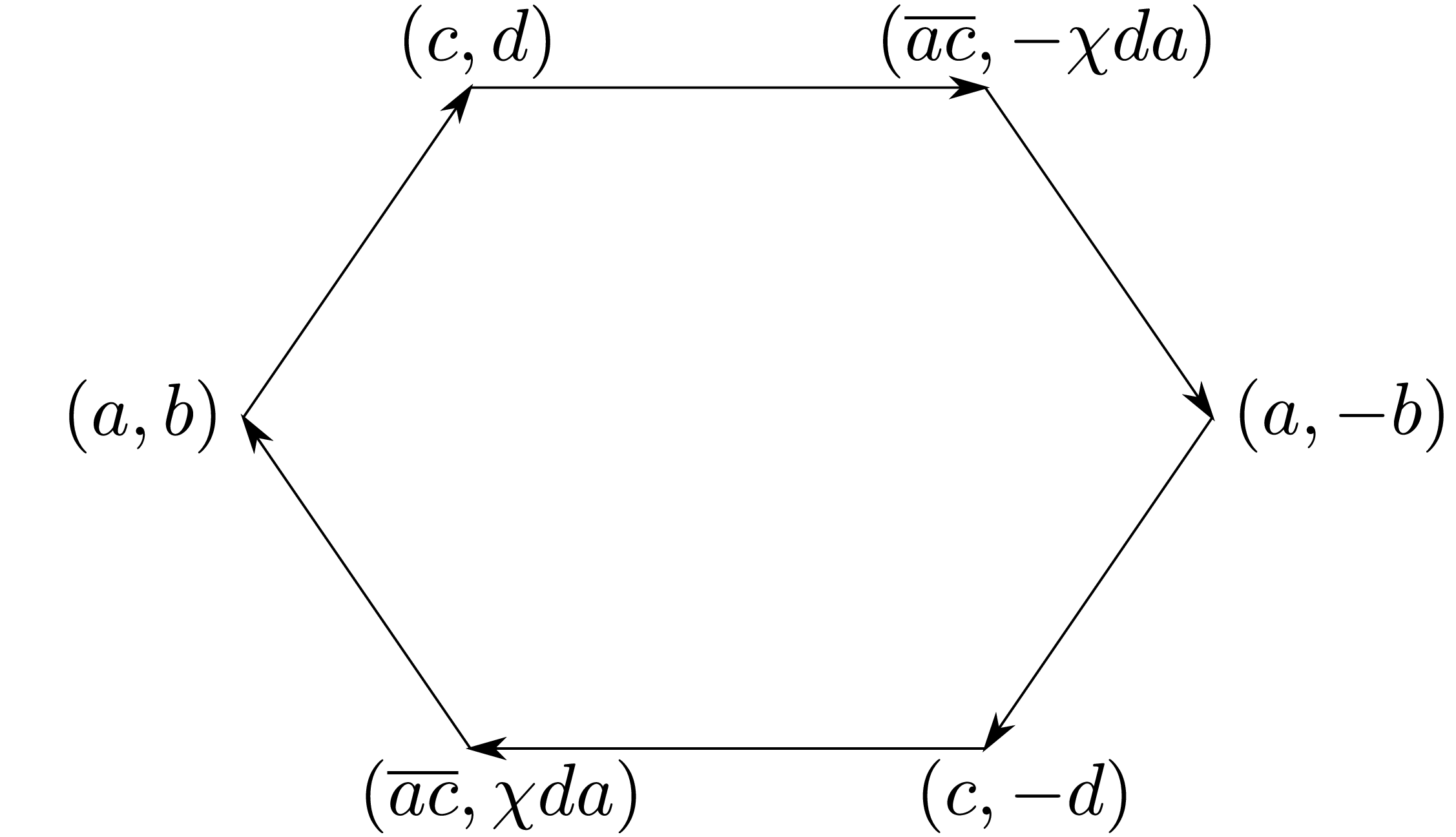}
\caption{\label{figure:directed-hexagon} A hexagon.}
\end{figure}

\subsection{Doubly alternative zero divisors}

By~\cite[Corollary~4.6]{our_split-algebras}, in case of real Cayley--Dickson algebras all zero divisors appear to be two-sided zero divisors, that is, $Z(\A_n) = Z_{LR}(\A_n)$. We now obtain a relation between orthogonality graphs and zero divisor graphs.

\begin{proposition} \label{proposition:orthogonality-condition}
Let $([a], [b])$ be an edge in $\Gamma_Z(\A_n)$. Then $([a], [b])$ is also an edge in $\Gamma_O(\A_n)$ if and only if one of the following conditions holds:
\begin{enumerate}
\item $[b] = [\bar{a}]$ and $n(a) = 0$;
\item $\Re(a) = \Re(b) = 0$.
\end{enumerate}
\end{proposition}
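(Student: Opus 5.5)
Since $([a],[b])$ is an edge in $\Gamma_Z(\A_n)$, we have $ab = 0$ with $a,b \neq 0$ and $[a] \neq [b]$. Being an edge of $\Gamma_O(\A_n)$ then amounts to the single extra condition $ba = 0$. Because $ab = 0$, this is equivalent to $ab + ba = 0$, that is, $b \in \Anc_{\A_n}(a)$. The plan is to read off the answer from Lemma~\ref{lemma:A_n-anticomm}, splitting into cases according to $\Re(a)$ and $n(a)$.

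If $\Re(a) \neq 0$ and $n(a) \neq 0$, the anticentralizer is trivial. Hence $ba = 0$ is impossible, since $b \neq 0$. Neither condition (1) nor (2) holds in this case: (1) needs $n(a)=0$ and (2) needs $\Re(a)=0$. So the equivalence holds trivially here.

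If $\Re(a) \neq 0$ and $n(a) = 0$, then $\Anc_{\A_n}(a) = \mathbb{R}\bar a$, so $ba = 0$ if and only if $[b] = [\bar a]$. Condition (2) fails because $\Re(a) \ne 0$. Thus $ba = 0$ is equivalent to condition (1). It is worth checking for consistency that $b = \bar a$ really works: $a\bar a = \bar a a = n(a) = 0$, so $\bar a$ is orthogonal to $a$. Also $[\bar a] \neq [a]$, because $\bar a = \lambda a$ would force $a$ to be real or pure, and a real $a$ with $n(a)=0$ is zero.

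If $\Re(a) = 0$, then $\Anc_{\A_n}(a)$ consists of the pure elements $b$ with $\langle a,b\rangle = 0$. It therefore suffices to show that $ab = 0$ already forces $\langle a,b\rangle = 0$. This follows from Proposition~\ref{proposition:lambda-form}: since $\bar a = -a$, we get $2\langle a,b\rangle = \bar a b + \bar b a = -ab + \bar b a$. I would rather use Lemma~\ref{lemma:inner-product-movement}, which gives $\langle a, b\rangle = \langle \bar a \cdot 1, b \rangle$. The cleanest route is $\langle \bar b, a \rangle$-type manipulations, which yield $\langle ab, e_0\rangle = \langle a, \bar b\rangle = -\langle a,b\rangle$ when $b$ is pure. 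Alternatively, $\Re(ab) = \langle a,\bar b\rangle$ holds in general, and $\Re(ab) = \Re(0) = 0$.

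So in this case $ba = 0$ if and only if $b$ is pure, because then $\langle a,b\rangle = -\langle a,\bar b\rangle = -\Re(ab) = 0$ automatically. Note that condition (1) implies $\bar a = -a$, so $[b] = [a]$, which is excluded for distinct vertices. Hence here $ba=0$ is equivalent to $\Re(b) = 0$, that is, to condition (2).

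The main point needing care is the identity $\Re(ab) = \langle a, \bar b\rangle$. It follows from Lemma~\ref{lemma:inner-product-movement}, since $\langle ab, e_0\rangle = \langle a, e_0 \bar b\rangle$, together with $\Re(x) = \langle x, e_0\rangle$. Everything else is direct bookkeeping from Lemma~\ref{lemma:A_n-anticomm}.
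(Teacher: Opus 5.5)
Your argument is correct. The forward direction is exactly the paper's: from $ab=ba=0$ you get $b\in\Anc_{\A_n}(a)$ and read off the cases from Lemma~\ref{lemma:A_n-anticomm}.

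The difference is in the converse under condition (2). The paper does not go back through the anticentralizer. It uses the involution directly: for pure $a,b$ one has $ba=\overline{\bar a\,\bar b}=\overline{ab}=0$, a one-line computation.

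You instead first show that, for pure $b$, $ab=0$ forces $\langle a,b\rangle=-\Re(ab)=0$. This uses the identity $\Re(ab)=\langle a,\bar b\rangle$, which does follow from Lemma~\ref{lemma:inner-product-movement}. Part (3) of Lemma~\ref{lemma:A_n-anticomm} then puts $b$ in the anticentralizer, so $ba=-ab=0$.

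Your detour is valid and makes two things explicit. First, $ab=0$ with $a,b$ pure already implies $\langle a,b\rangle=0$. Second, your case split on $(\Re(a),n(a))$ shows that condition (1) cannot occur for pure $a$ when the vertices are distinct. The paper's route is shorter and needs no inner-product identity.

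In a final write-up, drop the abandoned attempts in the $\Re(a)=0$ paragraph and keep only the $\Re(ab)=\langle a,\bar b\rangle$ argument.
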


\begin{proof}
Assume that $ab = ba = 0$. Then $ab + ba = 0$, that is, $b \in \Anc_{\A_n}(a)$, so we may apply Lemma~\ref{lemma:A_n-anticomm}. If $\Re(a) = 0$, then $\Re(b) = 0$ and $\langle a, b \rangle = 0$. Otherwise, $n(a) = 0$ and $b \in \mathbb{R}\bar{a}$.

We now prove the converse implication:
\begin{enumerate}
    \item If $n(a) = 0$ and $b \in \Lin(\bar{a})$, then we obtain $ab = ba = 0$ immediately;
    \item If $\Re(a) = \Re(b) = 0$, then $ba = \overline{\bar{a}\bar{b}} = \overline{ab} = \bar{0} = 0$. \qedhere
\end{enumerate}
\end{proof}

It follows from Proposition~\ref{proposition:orthogonality-condition} that any zero divisor $a \in \A_n$ with nontrivial orthogonalizer either is pure or has zero norm. If $a$ is not pure, then its connected component of $\Gamma_O(\A_n)$ consists of two vertices $[a]$ and $[\bar{a}]$. Hence, in the context of orthogonality graphs, we are interested in pure zero divisors only.

\begin{notation}
We denote the set of all pure zero divisors by
$$
Z'(\A_n) = Z(\A_n) \cap \A'_n = \{ x \in Z(\A_n) \; | \; \Re(x) = 0 \}.
$$
Then $\Gamma_O'(\A_n)$ is the subgraph of $\Gamma_O(\A_n)$ on the vertex set $P(Z'(\A_n))$.
\end{notation}

Consider now zero divisors $(a,b)\in \A_{n+1}$ such that both elements $a$ and $b$ are alternative elements in~$\A_n$.

\begin{definition}  The set of {\em doubly alternative elements\/} of $\A_{n+1}$ is
$$
DA(\A_{n+1}) = \{ (a, b) \in \A_{n+1} \; | \; \text{ both } a \text{ and }  b \; \text{are alternative elements in} \; \A_n \}.
$$
\end{definition}

\begin{lemma} \label{lemma:split-algebras-annihilators}
Let $(a,b) \in DA(\A_{n+1})$ satisfy condition~\eqref{equation:norm-condition}. Then
\begin{align*}
l.\Ann_{\A_{n+1}}((a,b)) &= \left\{ \left(c, -\dfrac{(bc)a}{n(a)} \right) \; \bigg| \; b(ca) = \chi (bc)a \right\}, \\
r.\Ann_{\A_{n+1}}((a,b)) &= \left\{ \left(c, -\dfrac{(b\bar{c})\bar{a}}{n(a)} \right) \; \bigg| \; (ac)\bar{b} = \chi a(c\bar{b}) \right\}.
\end{align*}
\end{lemma}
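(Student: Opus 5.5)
We describe the left annihilator by solving the equation $(c,d)(a,b)=0$ for $d$ in terms of $c$. Then we substitute into the remaining component equation and simplify it using the alternativity of $a$ and $b$. The right annihilator is handled in the same way.

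First we fix the scalars. Condition~\eqref{equation:norm-condition} gives $n(a)\neq 0$ and $n(b)\neq 0$, with $\chi=\gamma_n n(a)/n(b)=\pm1$ and $\gamma_n=\pm1$. Hence $n(b)=\gamma_n\chi\, n(a)$, and therefore $\gamma_n/n(a)=\chi/n(b)$. We also record the identities we need, which follow from alternativity of $a$ and $b$ in $\A_n$ together with $\bar a = 2\Re(a)-a$:
\begin{align*}
(x\bar a)a &= x(\bar a a) = n(a)x, & b(\bar b x) &= n(b)x, & (x b)\bar b &= n(b)x, & \bar b(bx) &= n(b)x,
\end{align*}
valid for all $x\in\A_n$.

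For the left annihilator, write
$$
(c,d)(a,b)=(ca+\gamma_n\bar b d,\; bc+d\bar a).
$$
The second component vanishes if and only if $d\bar a=-bc$. Right-multiplying by $a$ and using $(d\bar a)a=n(a)d$ shows this is equivalent to $d=-(bc)a/n(a)$; the converse direction uses $(x a)\bar a=n(a)x$. Substituting into the first component, it vanishes if and only if
$$
ca=\frac{\gamma_n}{n(a)}\,\bar b\big((bc)a\big)=\frac{\chi}{n(b)}\,\bar b\big((bc)a\big).
$$
Left-multiplying by $b$ and using $b(\bar b x)=n(b)x$ turns this into $b(ca)=\chi (bc)a$. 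Left-multiplying back by $\bar b$ and using $\bar b(bx)=n(b)x$, with $n(b)\neq0$, shows the two conditions are equivalent. This gives exactly the stated set.

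For the right annihilator, write
$$
(a,b)(c,d)=(ac+\gamma_n\bar d b,\; da+b\bar c).
$$
The second component gives $d=-(b\bar c)\bar a/n(a)$, hence $\bar d=-a(c\bar b)/n(a)$ (using $\overline{xy}=\bar y\bar x$). The first component then becomes
$$
ac=\frac{\chi}{n(b)}\big(a(c\bar b)\big)b .
$$
Right-multiplying by $\bar b$ turns this into $(ac)\bar b=\chi\, a(c\bar b)$, and right-multiplying by $b$ reverses the step.

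The only delicate point is justifying each cancellation identity. Each one involves only a single alternative element ($a$ or $b$), its conjugate, and one arbitrary element, so it follows from $[b,b,x]=[x,b,b]=0$ after substituting $\bar b=2\Re(b)-b$. Alternativity of $c$ is never needed. Beyond that, the remaining work is keeping the signs $\gamma_n$ and $\chi$ consistent.
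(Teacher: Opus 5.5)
Your proposal is correct and follows the paper's proof. As there, you solve the second component for $d$ by right-multiplying by $a$ (resp.\ $\bar a$), then multiply the first component by $b$ (resp.\ $\bar b$) and use $\chi n(a)=\gamma_n n(b)$; you also spell out the converse that the paper only calls ``reasoning in the opposite way'', and the right-alternativity $[x,b,b]=0$ you invoke follows from $[b,b,x]=0$ by flexibility, which the paper likewise uses tacitly.
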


\begin{proof}
We consider $l.\Ann_{\A_{n+1}}((a,b))$ first. Let $(c,d) \in \A_{n+1}$ such that $(c,d)(a,b) = (ca + \gamma_n \bar{b}d, bc + d\bar{a}) = 0$. Then $bc + d\bar{a} = 0$, so $n(a)d = d(\bar{a}a) = (d\bar{a})a = -(bc)a$. Moreover, $ca + \gamma_n\bar{b}d = 0$ and, by condition~\eqref{equation:norm-condition}, $\chi n(a) = \gamma_n n(b)$, hence $b(ca) = -\gamma_n b(\bar{b}d) = -\gamma_n (b\bar{b})d = -\gamma_n n(b)d = -\chi n(a)d = \chi (bc)a$. Reasoning in the opposite way, we may conclude that for any $c \in \A_n$ such that $b(ca) = \chi (bc)a$ we have $\left(c, -\frac{(bc)a}{n(a)} \right) \in l.\Ann_{\A_{n+1}}((a,b))$. So, the converse is also true.

We now proceed to $r.\Ann_{\A_{n+1}}((a,b))$. Let $(c,d) \in \A_{n+1}$ such that $(a,b)(c,d) = (ac + \gamma_n \bar{d}b, da + b\bar{c}) = 0$. Then $da + b\bar{c} = 0$, so $n(a)d = d(a\bar{a}) = (da)\bar{a} = -(b\bar{c})\bar{a}$. Since $ac + \gamma_n \bar{d}b = 0$ and $\chi n(a) = \gamma_n n(b)$, we have $(ac)\bar{b} = -\gamma_n (\bar{d}b)\bar{b} = -\gamma_n \bar{d}(b\bar{b}) = -\gamma_n n(b)\bar{d} = -\chi n(a) \bar{d} = \chi(\overline{-n(a)d}) = \chi \overline{(b\bar{c})\bar{a}} = \chi a(c\bar{b})$. Clearly, the converse is also true in this case, that is, for any $c \in \A_n$ such that $(ac)\bar{b} = \chi a(c\bar{b})$ we have $\left(c, -\frac{(b\bar{c})\bar{a}}{n(a)} \right) \in r.\Ann_{\A_{n+1}}((a,b))$.
\end{proof}

\begin{lemma} \label{lemma:split-algebras-orthogonalizers}
Let $(a,b) \in DA(\A_{n+1})$ be pure and satisfy condition~\eqref{equation:norm-condition}. Then
\begin{align*}
O_{\A_{n+1}}((a,b)) &= \left\{ \left(c, -\dfrac{(bc)a}{n(a)} \right) \; \bigg| \; \Re(c) = 0, \: b(ca) = \chi (bc)a \right\} =\\
&= \left\{ \left(\dfrac{(a\bar{d})b}{n(b)}, d \right) \; \bigg| \; \langle d, ba \rangle = 0, \: a(\bar{d}b) = \chi (a\bar{d})b \right\}.
\end{align*}
\end{lemma}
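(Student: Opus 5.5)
The plan is to intersect the two annihilators from Lemma~\ref{lemma:split-algebras-annihilators} with the help of Proposition~\ref{proposition:orthogonality-condition}, and then rewrite the resulting set in the variable $d$ instead of $c$.

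\textbf{Step 1: reduce orthogonality to a one-sided condition.} Since $(a,b)$ is pure and is a zero divisor, Proposition~\ref{proposition:orthogonality-condition} applies. It shows that an element $(c,d)$ with $(c,d)(a,b)=0$ is orthogonal to $(a,b)$ exactly when $\Re(c)=0$. For the forward direction I would use that an orthogonal element anticommutes with $(a,b)$, so Lemma~\ref{lemma:A_n-anticomm} forces it to be pure. For the converse, if $\Re((c,d))=\Re(c)=0$ and one product vanishes, then the other product is the conjugate of the first and also vanishes. The degenerate case $(c,d)=\mathbb{R}\overline{(a,b)}$ is already pure, so it causes no trouble.

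\textbf{Step 2: the first description.} By Step~1, $O_{\A_{n+1}}((a,b))$ equals the left annihilator from Lemma~\ref{lemma:split-algebras-annihilators} intersected with $\{\Re(c)=0\}$. This is precisely the first set in the statement.

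\textbf{Step 3: the second description.} Here I would redo the left-annihilator computation, solving for $c$ instead of $d$. The condition $(c,d)(a,b)=0$ means $ca+\gamma_n\bar b d=0$ and $bc+d\bar a=0$. The pure case requires $n(b)\neq 0$; this should follow from \eqref{equation:norm-condition} together with $(a,b)$ being a nonzero zero divisor.
\begin{itemize}
\item From the second equation, $n(b)c=\bar b(bc)=-\bar b(d\bar a)$, using that $b$ is alternative. Since $(a,b)$ is pure, $\bar a=-a$ and $\bar b=-b$, so $n(b)c=-b(da)$.
\item The target formula $c=(a\bar d)b/n(b)$ comes from conjugating: $\bar c=-c$ for pure $c$, so I would rather take $c=-\bar c$ with $\bar c=-\overline{\bar b(d\bar a)}/n(b)=-(a\bar d)b/n(b)$. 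The signs from purity need careful tracking here.
\item The remaining equation, rewritten with the identity $n(b)=\chi\gamma_n n(a)$ and the alternativity of $a$, should become $a(\bar d b)=\chi(a\bar d)b$. This mirrors the right-annihilator computation in Lemma~\ref{lemma:split-algebras-annihilators}.
\end{itemize}

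\textbf{Step 4: translate purity of $c$.} The condition $\Re(c)=0$ becomes $\langle (a\bar d)b,e_0\rangle=0$. Lemma~\ref{lemma:inner-product-movement} gives $\langle (a\bar d)b,e_0\rangle=\langle a\bar d,\bar b\rangle=\langle \bar d,\bar a\bar b\rangle=\langle d,ba\rangle$, using $\langle x,y\rangle=\langle\bar x,\bar y\rangle$.

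I expect the main obstacle to be the bookkeeping of signs and conjugations from purity in Step~3. In particular, I must verify that $a(\bar d b)=\chi(a\bar d)b$ is exactly equivalent to the first component equation, and not merely implied by it, so that the two descriptions coincide as sets.
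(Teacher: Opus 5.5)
Your Steps~1, 2 and~4 match the paper, but Step~3 takes a different route and contains one wrong line.

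\textbf{How your route differs.} The paper gets the second description from the \emph{right} product $(a,b)(c,d)=0$. There the component $da+b\bar c=0$ gives $n(b)c=-(\bar a\bar d)b=(a\bar d)b$ for every element of the right annihilator. Comparing with the other component gives $(\bar b d)a=\chi\bar b(da)$, whose conjugate is $a(\bar d b)=\chi(a\bar d)b$. Purity of $c$ then enters only as the side condition $\langle d,ba\rangle=0$.

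You use the left product instead. There $bc+d\bar a=0$ gives $n(b)c=-\bar b(d\bar a)$. Since $\overline{\bar b(d\bar a)}=(a\bar d)b$, this agrees with $(a\bar d)b$ only once $\bar c=-c$ is imposed. That is legitimate for the orthogonalizer. However, for the reverse inclusion you must first deduce $\Re(c)=0$ from $\langle d,ba\rangle=0$ (your Step~4) for $c:=(a\bar d)b/n(b)$. Only then can you use $c=-\bar c$ to check $bc+d\bar a=0$.

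\textbf{The wrong line.} You write ``since $(a,b)$ is pure, $\bar b=-b$''. Purity of $(a,b)\in\A_{n+1}$ means only $\Re(a)=0$. The lemma is in fact applied with $\Re(b)\neq 0$: Lemma~\ref{lemma:e_0-special-element} uses it for $(c,e_0)$. So $n(b)c=-b(da)$ is unjustified.

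The fix is simply to keep $\bar b$ throughout; your conjugation in the second bullet never used $\bar b=-b$ anyway. The bullet you left as ``should become'' also goes through without it, and as an exact equivalence:
\begin{itemize}
\item Right multiplication by $\bar a$ and left multiplication by $\bar b$ are injective, since $a$ and $b$ are alternative with nonzero norm; \eqref{equation:norm-condition} alone gives $n(a)n(b)\neq 0$.
\item Hence $(c,d)(a,b)=0$ is equivalent to the pair $n(a)c=-\gamma_n(\bar b d)\bar a$ and $n(b)c=-\bar b(d\bar a)$.
\item Given $d$, these two values of $c$ agree iff $\bar b(d\bar a)=\chi(\bar b d)\bar a$, using $\gamma_n n(b)=\chi n(a)$.
\item Conjugating gives precisely $(a\bar d)b=\chi a(\bar d b)$.
\end{itemize}
So the equivalence you worried about holds, and no purity of $b$ is needed. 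With these corrections your argument is a complete proof.
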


\begin{proof}
For the first part of the formula, it is sufficient to combine Lemma~\ref{lemma:split-algebras-annihilators} and Proposition~\ref{proposition:orthogonality-condition}.

The second equality is proved similarly. The condition $(a,b)(c,d) = (ac + \gamma_n \bar{d}b, da + b\bar{c}) = 0$ holds if and only if $c = -\frac{(\bar{a}\bar{d})b}{n(b)}$ and $(\bar{b}d)a = \chi \bar{b}(da)$. Since $\Re(a) = 0$, this is equivalent to $c = \frac{(a\bar{d})b}{n(b)}$ and $a(\bar{d}b) = \chi (a\bar{d})b$. Moreover, by Proposition~\ref{proposition:lambda-form} and Lemma~\ref{lemma:inner-product-movement}, we have $\Re(n(b)c) = \langle e_0, n(b)c \rangle = \langle e_0, (a\bar{d})b \rangle = \langle \bar{b}, a\bar{d} \rangle = \langle \bar{a}\bar{b}, \bar{d} \rangle = \langle \overline{ba}, \bar{d} \rangle = \langle d, ba \rangle$, so $\Re(c) = 0$ if and only if $\langle d, ba \rangle = 0$.
\end{proof}

\begin{remark} \label{remark:components-determined}
Lemma~\ref{lemma:split-algebras-orthogonalizers} can also be interpreted as follows. Let $a,b \in \A_n$ alternate strongly with $c,d \in \A_n$, $(a,b)$ and $(c,d)$ be pure, orthogonal in $\A_{n+1}$ and satisfy condition~\eqref{equation:norm-condition}. Then $b(ca) = \chi (bc)a$ and $a(\bar{d}b) = \chi (a\bar{d})b$. Moreover, the value of $d$ is uniquely determined by $a,b,c$ as $d = -\frac{(bc)a}{n(a)}$. The value of $c$ is also uniquely determined by $a,b,d$ as $c = \frac{(a\bar{d})b}{n(b)}$. Note that condition~\eqref{equation:norm-condition} implies that $(n(c))^2 = (n(d))^2 \neq 0$, and thus $c \neq 0$ and $d \neq 0$.
\end{remark}

\begin{corollary} \label{corollary:no-chords-in-hexagons}
Let $a,b,c,d,ac,ad \in \A_n$ alternate strongly pairwise, $(a,b)(c,d) = 0$ in $\A_{n+1}$, $(a,b)$ and $(c,d)$ satisfy condition~\eqref{equation:norm-condition}. Let also $\Re(a) = \Re(c) = \Re(ac) = 0$. Then the hexagon in Fig.~\ref{figure:directed-hexagon} is an undirected hexagon in $\Gamma_O(\A_{n+1})$, and there are no other edges in $\Gamma_O(\A_{n+1})$ which connect its vertices, that is, there are no chords in it.
\end{corollary}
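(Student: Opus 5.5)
Two things have to be shown: every directed edge of the hexagon of Corollary~\ref{corollary:A_n-double-hexagon} is also an edge of $\Gamma_O(\A_{n+1})$, and no other pair of the six vertices is orthogonal.

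\textbf{Step 1: the edges.} Corollary~\ref{corollary:A_n-double-hexagon} already gives six directed edges of $\Gamma_Z(\A_{n+1})$. By Proposition~\ref{proposition:orthogonality-condition}(2), each of them is an edge of $\Gamma_O(\A_{n+1})$ as soon as both endpoints are pure. Since $\Re((x,y)) = \Re(x)$, the hypotheses $\Re(a) = \Re(c) = \Re(ac) = 0$ make all six vertices pure, because $\Re(\overline{ac}) = \Re(ac) = 0$. It remains to check that the six vertices are pairwise distinct as lines. The first components lie in $[a]$, $[c]$, $[\overline{ac}]$. These three lines are distinct: $\langle a, ac\rangle = \langle \bar a a, c\rangle = n(a)\Re(c) = 0$ by Lemma~\ref{lemma:inner-product-movement}, and similarly $\langle c, ac\rangle = 0$ and $\langle a, c\rangle = 0$. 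The last one is needed for $ac$ to be pure, since $\Re(ac) = -\langle a, c\rangle$ for pure $a,c$. For the norms to be nonzero we use that condition~\eqref{equation:norm-condition} forces all the relevant norms to be nonzero (Remark~\ref{remark:condition-asterisk}). Finally, $(x,y)$ and $(x,-y)$ are not proportional because $y \neq 0$.

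\textbf{Step 2: no chords.} Up to the symmetry of the hexagon, the candidate chords are of two kinds. The first kind joins opposite vertices, for example $(a,b)$ and $(a,-b)$. The second kind joins vertices at distance two, for example $(a,b)$ and $(\overline{ac},-\chi da)$, or $(a,b)$ and $(\overline{ac},\chi da)$. The main tool is Remark~\ref{remark:components-determined}, applied with the pair of alternating elements appropriate to each candidate; the pairwise strong alternativity of $a,b,c,d,ac,ad$ is assumed precisely so that the remark applies. If a pure vertex $(x,y)$ is orthogonal to $(a,b)$, then $y = -(bx)a/n(a)$ is determined by $x$.

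\emph{Opposite vertices.} Since $(a,b)$ is orthogonal to $(c,d)$, the remark with $x = c$ gives $d = -(bc)a/n(a)$. If $(a,b)$ were orthogonal to $(a,-b)$, the same formula with $x = a$ would give $-b = -(ba)a/n(a) = b$, using alternativity and $a^2 = -n(a)$. This is impossible since $b \neq 0$.

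\emph{Distance two.} Take $x = \overline{ac}$. The formula yields one particular value of $y$, obtained from $(b\,\overline{ac})a$. I would simplify this using the hexagon relations $ac = -\gamma_n\bar d b$ and $da = -b\bar c$ together with alternativity, which should give $\pm da$ up to a scalar. That would leave at most one of the two vertices with first component $\overline{ac}$ as a possible neighbour. The other one would then have to be excluded by the sign, that is, by comparing with $\chi$.

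\textbf{Main obstacle.} I expect the real work to be this sign computation. Moving $b$ past $\overline{ac}$ and $a$ produces associator terms, and the hypothesis that $b$ alternates strongly with $ac$ is what controls them. The aim is to show that $-(b\,\overline{ac})a/n(a)$ equals $\chi\, da$. Here $\chi\,da$ is the second component of $(\overline{ac},\chi da)$, the vertex that already precedes $(a,b)$ in the hexagon. Hence the non-adjacent vertex $(\overline{ac},-\chi da)$ is not orthogonal to $(a,b)$, because its second component would have to equal $\chi\, da \neq -\chi\, da$, where $da \neq 0$. The remaining distance-two chords follow from the same computation, using the rotational symmetry of the hexagon: this symmetry comes from the proof of Corollary~\ref{corollary:A_n-double-hexagon}, where each vertex, up to scalars, is the ``next pair'' of its predecessor.
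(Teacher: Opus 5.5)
\paragraph{Verdict.} Your proposal has a genuine gap in the distance-two case, although it is easy to close with the tool you already use. Your Step 1 is fine, including the distinctness check that the paper leaves implicit. Your treatment of the opposite pair $(a,b)$, $(a,-b)$ is also fine. It is equivalent to the paper's observation that $(x,y)(x,-y)=(-n(x)-\gamma_n n(y),-2yx)$ with $n(yx)=n(x)n(y)\neq 0$. That argument works verbatim for every vertex, so no symmetry is needed there.

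\paragraph{The gap.} You do not actually exclude the distance-two chords. You leave the identification of $-(b\,\overline{ac})a/n(a)$ as an unperformed ``sign computation'' and call it the main obstacle. You then extend it to the other chords by a rotational symmetry that you have not justified: you would have to check that the rotated pair $(c,d)$, $(\overline{ac},-\chi da)$ again satisfies all the hypotheses of the corollary. There is also a slip in your list. The vertices $(a,b)$ and $(\overline{ac},\chi da)$ are adjacent, not at distance two. The two vertices at distance two from $(a,b)$ are $(\overline{ac},-\chi da)$ and $(c,-d)$.

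\paragraph{The missing idea.} No computation is needed. By Remark~\ref{remark:components-determined}, a neighbour of $(a,b)$ with first component $z$ has second component uniquely equal to $-(bz)a/n(a)$.
\begin{itemize}
\item Your Step 1 already shows that $(\overline{ac},\chi da)$ is a neighbour of $(a,b)$. This forces $-(b\,\overline{ac})a/n(a)=\chi\, da$. Orthogonality of $(a,b)$ to $(\overline{ac},-\chi da)$ would then give $\chi\, da=-\chi\, da$. This is impossible, since $n(da)=n(d)n(a)\neq 0$.
\item You wrote down $d=-(bc)a/n(a)$ in the opposite-vertex paragraph, where it plays no role. It excludes $(c,-d)$ in exactly the same way.
\end{itemize}
This is the paper's argument, and it is uniform. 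Label the cycle $V_0,\dots,V_5$ in the order of Corollary~\ref{corollary:A_n-double-hexagon}. For each $i$, the vertex $V_{i+2}$ differs from the neighbour $V_{i-1}$ of $V_i$ only by the sign of the second component. Hence every distance-two chord is ruled out without any appeal to symmetry.

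\paragraph{The explicit value.} If you want it anyway, it takes two lines and involves no associators. From $ac=-\gamma_n\bar d b$ we get $\overline{ac}=-\gamma_n\bar b d$. Since $b$ alternates with $d$, $b\,\overline{ac}=-\gamma_n b(\bar b d)=-\gamma_n n(b)d$. Therefore $-(b\,\overline{ac})a/n(a)=\gamma_n\frac{n(b)}{n(a)}\,da=\chi\, da$ by condition~\eqref{equation:norm-condition}.
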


\begin{proof}
Since $\Re(a) = \Re(c) = \Re(ac) = 0$, it follows from Proposition~\ref{proposition:orthogonality-condition} that the hexagon in Fig.~\ref{figure:directed-hexagon} is a hexagon not only in $\Gamma_Z(\A_{n+1})$ but also in $\Gamma_O(\A_{n+1})$. 

Let now $(x,y)$ and $(z,w)$ be two of its vertices, possibly equal. We first show that $(x,y)$ cannot be orthogonal both to $(z,w)$ and to $(z,-w)$. We assume otherwise. Then, by Remark~\ref{remark:components-determined}, $(x,y)(z,w) = 0$ implies $w = -\frac{(yz)x}{n(x)}$, and $(x,y)(z,-w) = 0$ implies $-w = -\frac{(yz)x}{n(x)}$, so $w = -w$. But $w \neq 0$, a contradiction.

Let $\Re(x) = 0$. It is possible in general that $(x,y)(x,-y) = (- n(x) - \gamma_n n(y), -2yx) = 0$, so $(x,y)$ and $(x,-y)$ are orthogonal. However, if $x$ alternates strongly with $y$ and $(x,y)$ satisfies condition~\eqref{equation:norm-condition}, then $(x,y)$ cannot be orthogonal to $(x,-y)$. Indeed, by Lemma~\ref{lemma:alternative-elements-are-normed}, $n(yx) = n(y)n(x) \neq 0$. Hence $yx \neq 0$, and thus $(x,y)(x,-y) \neq 0$.
\end{proof}

\section{Hexagons for algebras of the main sequence} \label{section:main-sequence}

\subsection{Main properties of zero divisors} \label{subsection:main-sequence}

We have already mentioned that $Z(\A_n) = Z_{LR}(\A_n)$. In case of the algebras of the main sequence a stronger result holds.

\begin{lemma}[{\cite[Corollary~1.6, Corollary~1.9, Corollary~1.12]{moreno}}] \label{lemma:moreno-zero-divisor-conditions} \label{lemma:moreno-doubly-pure} \label{lemma:moreno-tilde}
\leavevmode
\begin{itemize}
    \item Let $x,y \in \Main_{n+1}$. Then $xy = 0$ if and only if $yx = 0$.
    \item If $x \in Z(\Main_{n+1})$, then $x$ is doubly pure.
    \item Let $x=(x_1,x_2)$, $y=(y_1,y_2)$, $\til{y}=(-y_2,y_1) \in \Main_{n+1}$. Then $xy=0$ if and only if $x\til{y} = 0$.
\end{itemize}
\end{lemma}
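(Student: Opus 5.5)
The plan is to prove the second bullet first, since it is the real content. The first and third bullets should then follow from it by short manipulations using the facts that $\Main_{n+1}$ has a Euclidean norm (Proposition~\ref{proposition:A_n-euclidean-product}) and that pure elements satisfy $ba=\overline{ab}$. Throughout, $\gamma_n=-1$. For $x=(a,b)$ and $y=(c,d)$ the product is $xy=(ac-\bar d b,\ da+b\bar c)$.

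\emph{Second bullet (doubly pure).} Let $xy=0$ with $y\neq 0$.

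First I show $\Re(x)=0$. Write $x=\alpha+x'$ with $x'$ pure, so $x'y=-\alpha y$. By Lemma~\ref{lemma:inner-product-movement},
$$\langle x'y,y\rangle=\langle x',y\bar y\rangle=n(y)\langle x',e_0\rangle=0.$$
On the other hand $\langle -\alpha y,y\rangle=-\alpha n(y)$, and $n(y)>0$ because the norm is Euclidean. Hence $\alpha=0$. The same argument, moving $x$ to the other side, gives $\Re(y)=0$. So $a$ and $c$ are pure.

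Next I show $\Re(b)=0$, working with the component equations $ac=\bar d b$ and $da=bc$ (the latter because $\bar c=-c$). The idea is to pair these equations with suitable elements via Lemma~\ref{lemma:inner-product-movement} and then invoke positivity. The relevant pairings are:
$$\langle \bar d b,\bar d\rangle=\langle b,d\bar d\rangle=n(d)\Re(b),\qquad \langle da,d\rangle=n(d)\langle a,e_0\rangle=0.$$
Pairing $ac=\bar d b$ with $\bar d$ turns the left-hand side into $\langle a,\overline{cd}\rangle$. I then have to show this vanishes, using $da=bc$ and the purity of $a$ and $c$. This gives $n(d)\Re(b)=0$.

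If $d=0$, then $ac=0$ with $a,c$ pure in $\Main_n$. Pairing with $c$ and using Lemma~\ref{lemma:inner-product-movement} gives $a=0$, and an analogous symmetric computation with $b$ handles this case. By symmetry the same reasoning applies to $y$.

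I expect this bookkeeping of inner products — showing $\langle a,\overline{cd}\rangle=0$, i.e. choosing the right pairings to isolate $\Re(b)$ — to be the main obstacle. If it resists, I would fall back on Moreno's original argument.

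\emph{First bullet.} If $xy=0$, then $y$ is also a zero divisor, so by the second bullet both $x$ and $y$ are pure. Then
$$yx=\overline{\bar x\,\bar y}=\overline{xy}=0,$$
as in Proposition~\ref{proposition:orthogonality-condition}.

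\emph{Third bullet.} Compute
$$x\til{y}=(a,b)(-d,c)=(-ad-\bar c b,\ ca-b\bar d).$$
By the first bullet, $xy=0$ gives $yx=0$ as well, i.e. $ca=\bar b d$ and $bc=-d\bar a$. Double purity of $x$ and $y$ gives $\bar b=-b$ and $\bar d=-d$, so the second component $ca+bd$ is zero.

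For the first component, conjugate the relation $da=bc$ coming from $xy=0$. This gives $\bar a\bar d=\bar c\bar b$, hence $ad=cb$, so $-ad-\bar c b=-ad+cb=0$.

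The converse follows by applying the same argument to $\til{y}$, since $\til{\til{y}}=-y$.
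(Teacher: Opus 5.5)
The paper gives no proof of this lemma; it quotes it from Moreno. Two of your three pieces are sound. The positivity argument for $\Re(x)=\Re(y)=0$ is correct, and the first bullet needs nothing more. The sign bookkeeping in the third bullet also checks out once $x$, $y$ (and $\til{y}$, for the converse) are known to be doubly pure.

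The gap is in the second bullet, at the step you flagged, and the planned step cannot work. From $da=bc$, $\bar c=-c$ and Lemma~\ref{lemma:inner-product-movement},
\[
\langle a,\overline{cd}\rangle=-\langle a,\bar d c\rangle=-\langle da,c\rangle=-\langle bc,c\rangle=-n(c)\Re(b).
\]
So for $c\neq 0$, showing $\langle a,\overline{cd}\rangle=0$ from $da=bc$ and purity is equivalent to the goal $\Re(b)=0$. For instance, $a=i$, $c=j$, $d=k$, $b=1$ in $\mathbb{H}$ satisfy $ki=1\cdot j$ but give $\langle a,\overline{cd}\rangle=-1$. Your other pairing, $\langle da,d\rangle=0$, is true but does not help.

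The missing idea is to combine the two component equations rather than make one term vanish. The pairing you already have gives
\[
\langle a,\overline{cd}\rangle=\langle ac,\bar d\rangle=\langle \bar d b,\bar d\rangle=n(d)\Re(b).
\]
Comparing with the display above yields $(n(c)+n(d))\Re(b)=n(y)\Re(b)=0$. It is positivity of $n(y)$, not of $n(d)$, that finishes the argument, so no case distinction is needed.

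This also removes your $d=0$ branch, which is wrong as written. For $n\ge 4$, $ac=0$ with $a,c$ pure in $\Main_n$ does not force $a=0$, and pairing $ac$ with $c$ only gives $n(c)\Re(a)=0$. If you keep the split, the correct observation is that $d=0$ forces $bc=0$ with $c\neq 0$, and your first step applied in $\Main_n$ gives $\Re(b)=0$.

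Finally, make ``by symmetry'' precise for $\Re(d)$: apply the same argument to $\bar y\,\bar x=\overline{xy}=0$, whose left factor is $\bar y=(\bar c,-d)$.
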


\begin{corollary} \label{corollary:zero-divisors-symmetry}
$\Gamma_Z(\Main_{n+1})$ can be obtained from $\Gamma_O(\Main_{n+1})$ by replacing every undirected edge with a pair of directed edges.
\end{corollary}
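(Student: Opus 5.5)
The plan is to read the statement straight off the definitions, using the three parts of Lemma~\ref{lemma:moreno-zero-divisor-conditions} together with Proposition~\ref{proposition:orthogonality-condition}. The vertex sets of $\Gamma_Z(\Main_{n+1})$ and $\Gamma_O(\Main_{n+1})$ coincide. Indeed, by~\cite[Corollary~4.6]{our_split-algebras} we have $Z(\Main_{n+1}) = Z_{LR}(\Main_{n+1})$. Alternatively, this follows directly from the first item of Lemma~\ref{lemma:moreno-zero-divisor-conditions}: if $xy=0$ with $y \neq 0$, then also $yx = 0$, so $x$ is a two-sided zero divisor. Hence $P(Z(\Main_{n+1})) = P(Z_{LR}(\Main_{n+1}))$.

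Next I compare the edges. Let $[a] \neq [b]$ be vertices.
\begin{itemize}
\item If $\{[a],[b]\}$ is an edge of $\Gamma_O$, then $ab = ba = 0$. Hence both directed edges $([a],[b])$ and $([b],[a])$ lie in $\Gamma_Z$.
\item Conversely, suppose $([a],[b])$ is a directed edge of $\Gamma_Z$, so $ab = 0$. By the first item of Lemma~\ref{lemma:moreno-zero-divisor-conditions}, $ba = 0$. Thus $a$ and $b$ are orthogonal, and $\{[a],[b]\}$ is an edge of $\Gamma_O$. The reverse arc $([b],[a])$ is also present.
\end{itemize}
So the directed edges of $\Gamma_Z$ come exactly in opposite pairs, and these pairs correspond bijectively to the undirected edges of $\Gamma_O$.

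One could also argue through Proposition~\ref{proposition:orthogonality-condition}. Zero divisors are doubly pure by the second item of Lemma~\ref{lemma:moreno-zero-divisor-conditions}, in particular pure. So condition (2) of that proposition holds automatically, and every $\Gamma_Z$-edge is a $\Gamma_O$-edge. Symmetry of $\Gamma_Z$ then again comes from the first item.

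The only point needing care is that edges are defined on lines, not on elements. This causes no trouble, because $ab=0$ is invariant under rescaling $a$ and $b$ by nonzero reals. Loops are excluded in both graphs, since both require distinct vertices. I expect no real obstacle here; the substance is entirely in Moreno's first item, which we take as given.
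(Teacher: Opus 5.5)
Your proposal is correct and matches the paper's proof, which simply says the corollary follows immediately from Lemma~\ref{lemma:moreno-zero-divisor-conditions} via the symmetry $xy=0 \iff yx=0$. You additionally spell out why the vertex sets $P(Z(\Main_{n+1}))$ and $P(Z_{LR}(\Main_{n+1}))$ coincide and why directed edges pair up into undirected ones, which is exactly what the paper leaves implicit.
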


\begin{proof}
Follows immediately from Lemma~\ref{lemma:moreno-zero-divisor-conditions}.
\end{proof}

In the following lemma of~\cite{moreno}, Moreno assumes that $a$ and $b$ are alternative elements in $\Main_n$. However, the proof is valid verbatim if we merely suppose that $a,b \in \Main_n$ alternate with $c,d \in \Main_n$.

\begin{lemma}[{\cite[pp. 25-27]{moreno}}] \label{lemma:A_n-alternative-properties}
Let $a,b \in \Main_n$ alternate with $c,d \in \Main_n$, $(a,b),(c,d) \in Z(\Main_{n+1})$, $(a,b)(c,d) = 0$. Then
\begin{enumerate}
\item $\Re(a)=\Re(b)=\Re(c)=\Re(d)=0$;
\item $n(a)=n(b)$;
\item $c \perp d$;
\item $ac = -db$, $da = bc$;
\item $[a,c,b] = 2n(a)d$, $[a,d,b] = -2n(a)c$;
\item $\{ a,c,b \} = \{ a,d,b \} = 0$.
\end{enumerate}
Furthermore, if $a$ alternates strongly with $b$, then
\begin{enumerate}
\setcounter{enumi}{6}
\item $c, d \in \Lin(e_0,a,b,ab)^{\perp}$.
\end{enumerate}
\end{lemma}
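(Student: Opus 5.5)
The plan is to specialise the relation $(a,b)(c,d)=0$ to $\gamma_n=-1$, read off (1)--(4) directly, and then derive (5)--(7) from (4) by moving factors across the inner product.

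\emph{Items (1)--(4).} Since $(a,b),(c,d)\in Z(\Main_{n+1})$, Lemma~\ref{lemma:moreno-doubly-pure} says both are doubly pure, which is (1). Hence $\bar a=-a$ and similarly for $b,c,d$. With $\gamma_n=-1$ the relation $(a,b)(c,d)=(ac-\bar d b,\, da+b\bar c)=0$ then becomes $ac=-db$ and $da=bc$, which is (4).

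\emph{Item (2).} Because $a,b$ alternate with $c,d$, Lemma~\ref{lemma:alternative-elements-are-normed} gives
\[
n(a)n(c)=n(d)n(b), \qquad n(a)n(d)=n(b)n(c).
\]
The norm on $\Main_n$ is Euclidean (Proposition~\ref{proposition:A_n-euclidean-product}). Therefore $c=0$ would force $d=0$ (and conversely), so $c,d\neq 0$. Dividing the two identities yields $n(c)^2=n(d)^2$, hence $n(c)=n(d)$, and then $n(a)=n(b)$.

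\emph{Item (3).} For $c\perp d$ I will polarise norm multiplicativity. The map $x\mapsto[a,a,x]$ is linear, so $a$ also alternates with $c+d$. Together with Lemma~\ref{lemma:inner-product-movement} this gives $\langle ac,ad\rangle=\langle \bar a(ac),d\rangle=n(a)\langle c,d\rangle$. Substituting $ac=-db$ and $ad=-da-2\langle a,d\rangle=-bc-2\langle a,d\rangle$, I would rewrite $\langle ac,ad\rangle$ as a multiple of $\langle db,bc\rangle$, and then move $b$ across the form to return to $\pm n(b)\langle c,d\rangle$ with the opposite sign. Since $n(a)=n(b)\neq0$, this forces $\langle c,d\rangle=0$. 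The real parts in this computation are controlled by $\Re(xy)=-\langle x,y\rangle$ for pure $x,y$. The sign bookkeeping here is delicate, and I expect this step and (5) to be the main obstacle.

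\emph{Items (5) and (6).} I will compute each term of the associator and anti-associator separately. First, $(ac)b=-(db)b=-d(bb)=n(b)d=n(a)d$. This uses $[d,b,b]=0$, which follows from $[b,b,d]=0$ by applying the involution to pure elements together with flexibility. Second, for $a(cb)$ I write $cb=-bc-2\langle b,c\rangle$ and use $bc=da$, so that $a(da)=(ad)a$ by flexibility. Since $a$ alternates with $d$, this reduces to $\pm n(a)d$ plus terms in $\langle a,d\rangle a$. The target values are then obtained by taking sum and difference: $a(cb)=-n(a)d$ gives $[a,c,b]=2n(a)d$ and $\{a,c,b\}=0$. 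The remaining stray inner-product terms must be shown to vanish using $\langle a,c\rangle=-\langle d,b\rangle$ and $\langle d,a\rangle=\langle b,c\rangle$. These two relations come from taking real parts in (4). The symmetric computation with $d$ in place of $c$, using $ad$ and $\bar c$, gives the identities for $[a,d,b]$ and $\{a,d,b\}$.

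\emph{Item (7).} Under strong alternativity, $\langle c,e_0\rangle=0$ is already known from (1). Next, $\langle c,ab\rangle=\langle \bar a c,b\rangle=-\langle ac,b\rangle=\langle db,b\rangle=n(b)\langle d,e_0\rangle=0$. For $\langle c,a\rangle$ and $\langle c,b\rangle$ I will use multiplicativity of right multiplication by $b$ (available since $b$ alternates strongly with $a$ and with $c$): $n(b)\langle c,a\rangle=\langle cb,ab\rangle$. Then I substitute $cb=-da-2\langle b,c\rangle$ and use the relations above to obtain a homogeneous linear system in $\langle a,c\rangle,\langle b,c\rangle,\langle a,d\rangle,\langle b,d\rangle$ whose only solution is zero. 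The same argument applies to $d$.
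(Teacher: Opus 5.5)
The paper gives no proof of its own here. It imports Moreno's argument (pp.~25--27) and only remarks that it stays valid when $a,b$ merely alternate with $c,d$. Your proposal is a correct, self-contained alternative. You work from $ac=-db$, $da=bc$, their real parts $\langle a,c\rangle=-\langle b,d\rangle$ and $\langle a,d\rangle=\langle b,c\rangle$, Lemma~\ref{lemma:inner-product-movement}, and the four identities $[a,a,c]=[a,a,d]=[b,b,c]=[b,b,d]=0$ with their conjugates $[c,b,b]=[d,b,b]=0$. This buys an explicit check of the paper's ``verbatim'' claim, since each step shows exactly which alternation identity it needs. The computations go through:
\begin{itemize}
\item In (3), $\langle ac,ad\rangle=n(a)\langle c,d\rangle$ and also $\langle ac,ad\rangle=-n(b)\langle c,d\rangle+2\langle b,d\rangle(\langle b,c\rangle-\langle a,d\rangle)$. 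Hence $(n(a)+n(b))\langle c,d\rangle=0$; the polarisation via $c+d$ is unnecessary.
\item In (5)--(6), you get $(ac)b=n(a)d$, $a(cb)=-n(a)d$, $a(db)=n(a)c$ and $(ad)b=-n(a)c$, as you outline.
\end{itemize}

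Item (7) needs tightening in three places.
\begin{itemize}
\item $b$ is not assumed to alternate strongly with $c$, so that justification is wrong. You do not need it: $\langle cb,ab\rangle=\langle (cb)\bar b,a\rangle=n(b)\langle c,a\rangle$ uses only $[c,b,b]=0$.
\item Substituting $cb=-da-2\langle b,c\rangle e_0$ gives $n(b)\langle c,a\rangle=n(a)\langle d,b\rangle+2\langle a,b\rangle(\langle b,c\rangle-\langle a,d\rangle)$. The $\langle a,b\rangle$ terms genuinely appear, because $a\perp b$ is not assumed, and they cancel by $\langle a,d\rangle=\langle b,c\rangle$.
\item This single relation only gives $\langle a,c\rangle=\langle b,d\rangle=0$. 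To kill $\langle a,d\rangle=\langle b,c\rangle$ you need the companion relation $n(a)\langle c,b\rangle=\langle ac,ab\rangle=-\langle db,ab\rangle=-n(b)\langle d,a\rangle$. This is presumably what ``the same argument applies to $d$'' must mean, and you should write it out.
\end{itemize}
With these two relations plus the two real-part identities, the $4\times 4$ system is nonsingular. Note also that along this route the strong alternativity of $a$ with $b$ is never used, so your argument actually proves (7) under the hypotheses of (1)--(6) alone.
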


\begin{corollary} \label{corollary:A_n-strongly-alternative-properties}
Let $a,b \in \Main_n$ alternate strongly with $c,d \in \Main_n$, $(a,b),(c,d) \in Z(\Main_{n+1})$, $(a,b)(c,d) = 0$. Then
\begin{enumerate}
\item $\Re(a)=\Re(b)=\Re(c)=\Re(d)=0$;
\item $n(a)=n(b)$, $n(c)=n(d)$;
\item $[a,c,b] = 2n(a)d$, $[a,d,b] = -2n(a)c$, $[c,a,d] = 2n(c)b$, $[c,b,d] = -2n(c)a$;
\item $\{ a,c,b \} = \{ a,d,b \} = \{ b,c,a \} = \{ b,d,a \} = 0$,\\
$\{ c,a,d \} = \{ c,b,d \} = \{ d,a,c \} = \{ d,b,c \} = 0$.
\end{enumerate}
If $a,b,c,d$ alternate strongly pairwise, then
\begin{enumerate}
\setcounter{enumi}{4}
\item $a,b,c,d$ are pairwise orthogonal with respect to the inner product $\langle \cdot, \cdot \rangle$;
\item $ac = bd$, $ad = -bc$.
\end{enumerate}
\end{corollary}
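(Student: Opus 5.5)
The plan is to derive each item from Lemma~\ref{lemma:A_n-alternative-properties}, applied twice: once to $(a,b)(c,d)=0$ and once to the reversed product $(c,d)(a,b)=0$. The reversed product vanishes by Lemma~\ref{lemma:moreno-zero-divisor-conditions}, since in $\Main_{n+1}$ we have $xy=0$ if and only if $yx=0$. Strong alternativity is what makes the second application legitimate: $c,d$ alternate with $a,b$, exactly the hypothesis of Lemma~\ref{lemma:A_n-alternative-properties} with the roles of the pairs swapped.

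Items (1)--(4) then follow directly. The first application gives $\Re(a)=\Re(b)=\Re(c)=\Re(d)=0$, $n(a)=n(b)$, $[a,c,b]=2n(a)d$, $[a,d,b]=-2n(a)c$ and $\{a,c,b\}=\{a,d,b\}=0$. The second gives $n(c)=n(d)$, $[c,a,d]=2n(c)b$, $[c,b,d]=-2n(c)a$ and $\{c,a,d\}=\{c,b,d\}=0$. The remaining anti-associators come from reversal under conjugation. For pure $x,y,z$ we have $\overline{(xy)z}=\bar z(\bar y\bar x)=-z(yx)$ and $\overline{x(yz)}=-(zy)x$. Hence $\overline{\{x,y,z\}}=-\{z,y,x\}$, so $\{b,c,a\}=-\overline{\{a,c,b\}}=0$, and the analogous computation gives $\{b,d,a\}$, $\{d,a,c\}$ and $\{d,b,c\}$.

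For (5), assume $a,b,c,d$ alternate strongly pairwise. Item (7) of Lemma~\ref{lemma:A_n-alternative-properties}, applied to the first product (with $a$ alternating strongly with $b$), gives $c,d\perp a,b$. The same lemma yields $c\perp d$. Applying it to the reversed product gives $a\perp b$, which can also be read off from item (7) there, since $a,b\perp c,d$ already. Since $n$ and $\langle\cdot,\cdot\rangle$ are Euclidean on $\Main_n$, this is genuine pairwise orthogonality.

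For (6), the lemma gives $ac=-db$ and $da=bc$. Since $b,d$ are pure and orthogonal, Lemma~\ref{lemma:A_n-anticomm}(\ref{item:A_n-anticomm-pure}) shows they anticommute, so $db=-bd$ and $ac=bd$. Likewise $a\perp d$ gives $da=-ad$, so $ad=-da=-bc$.

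The main obstacle is checking that the reversed product really satisfies the hypotheses of Lemma~\ref{lemma:A_n-alternative-properties}. This is where strong alternativity, rather than one-sided alternation, is essential. The sign bookkeeping in the conjugation identities is routine but must be done carefully.
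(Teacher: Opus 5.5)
Your proposal is correct and follows essentially the paper's route. The paper also applies Lemma~\ref{lemma:A_n-alternative-properties} to both $(a,b)(c,d)=0$ and $(c,d)(a,b)=0$ (the latter via Lemma~\ref{lemma:moreno-zero-divisor-conditions}), obtains the remaining anti-associators from $\overline{\{x,y,z\}}=-\{z,y,x\}$, and deduces (6) from the pairwise anticommutation of orthogonal pure elements. One small correction to your aside in (5): $a\perp b$ comes from item (3) of the reversed application, not from item (7), which only gives $a,b\perp\Lin(e_0,c,d,cd)$.
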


\begin{proof}
By Lemma~\ref{lemma:moreno-zero-divisor-conditions}, $(a,b)(c,d)=0$ implies $(c,d)(a,b)=0$, so we can apply Lemma~\ref{lemma:A_n-alternative-properties} twice. Clearly, $\{ b,c,a \} = -\overline{\{ a,c,b \}} = 0$, and the rest of equalities can be proved similarly.

If $a,b,c,d$ alternate strongly pairwise, then $a,b,c,d$ are pairwise orthogonal with respect to the inner product $\langle \cdot, \cdot \rangle$, so Lemma~\ref{lemma:A_n-anticomm} implies that $a,b,c,d$ anticommute pairwise. Hence $ac = -db = bd$ and $ad = -da = -bc$.
\end{proof}

Under the conditions of Corollary~\ref{corollary:A_n-strongly-alternative-properties}, we may always assume without loss of generality that $n(a) = n(b) = n(c) = n(d) = 1$.

\subsection{Double hexagons in orthogonality graphs}

The following lemma describes a product of two elements related to a pair of zero divisors, and it holds for an arbitrary real Cayley--Dickson algebra.

\begin{lemma} \label{lemma:next-mult-pair}
Let $(a,b)(c,d) = 0$ in $\A_{n+1}$. Then
\begin{align*}
    (a,b)(c,-d) &= 2(ac,-da),\\
    (a,-b)(c,d) &= 2(ac,da).
\end{align*}
\end{lemma}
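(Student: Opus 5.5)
Expanding the products with the multiplication rule of Definition~\ref{definition:Cayley--Dickson-algebras} settles this, after which the hypothesis is used to cancel or double terms. By that rule,
\begin{align*}
(a,b)(c,-d) &= \left(ac + \gamma_n \overline{(-d)}\,b,\; (-d)a + b\bar{c}\right) = \left(ac - \gamma_n \bar{d}b,\; -da + b\bar{c}\right),\\
(a,-b)(c,d) &= \left(ac + \gamma_n \bar{d}(-b),\; da + (-b)\bar{c}\right) = \left(ac - \gamma_n \bar{d}b,\; da - b\bar{c}\right).
\end{align*}
Only bilinearity of the product and linearity of the involution are needed here, so these expansions are valid in any $\A_{n+1}$.

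The hypothesis $(a,b)(c,d) = (ac + \gamma_n \bar{d}b,\, da + b\bar{c}) = 0$ gives, componentwise, $\gamma_n \bar{d}b = -ac$ and $b\bar{c} = -da$; these are the same identities used in the proof of Lemma~\ref{lemma:strongly-alternative-system}. Substituting them, the first components become $ac - \gamma_n\bar{d}b = ac + ac = 2ac$. The second component of $(a,b)(c,-d)$ is $-da + b\bar{c} = -da - da = -2da$, and that of $(a,-b)(c,d)$ is $da - b\bar{c} = da + da = 2da$. Collecting components yields $(a,b)(c,-d) = 2(ac,-da)$ and $(a,-b)(c,d) = 2(ac,da)$.

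No real obstacle is expected. The only care needed is sign bookkeeping: $\overline{-d} = -\bar{d}$, and the conjugate $\bar{c}$ appears in the second component while $\bar{d}$ appears in the first. No alternativity or norm condition is required, in line with the remark that the lemma holds for an arbitrary real Cayley--Dickson algebra.
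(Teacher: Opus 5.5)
Your proof is correct and is essentially the paper's argument: both are direct computations with the Cayley--Dickson product rule, using the vanishing of $(a,b)(c,d)$. The only cosmetic difference is that the paper adds the zero product $(a,b)(c,d)$ to reduce to $(a,b)(2c,0)$ and $(2a,0)(c,d)$, so it needs only the second-component relation $da + b\bar{c} = 0$, whereas you substitute both componentwise relations.
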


\begin{proof}
Since $(a,b)(c,d) = 0$, we have $da + b\bar{c} = 0$. Hence
\begin{align*}
    (a,b)(c,-d) &= (a,b)(c,-d) + (a,b)(c,d) = (a,b)(2c,0) = 2(ac,b\bar{c}) = 2(ac,-da),\\
    (a,-b)(c,d) &= (a,-b)(c,d) + (a,b)(c,d) = (2a,0)(c,d) = 2(ac,da). \tag*{\qedhere}
\end{align*}
\end{proof}

In the statements~\ref{lemma:orthonormal-system}~-~\ref{remark:linear-independence} and in Fig.~\ref{figure:double-hexagon}
we assume that $a,b \in \Main_n$ alternate strongly with $c,d \in \Main_n$, $(a,b)(c,d) = 0$ in $\Main_{n+1}$, $n(a) = n(b) = n(c) = n(d) = 1$.

\begin{lemma} \label{lemma:orthonormal-system}
The elements $e_0,a,b,c,d,ac,ad$ form an orthonormal system with respect to the inner product $\langle \cdot, \cdot \rangle$.
\end{lemma}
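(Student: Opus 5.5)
We must show that $e_0,a,b,c,d,ac,ad$ all have norm $1$ and are pairwise orthogonal with respect to $\langle\cdot,\cdot\rangle$, which by Proposition~\ref{proposition:A_n-euclidean-product} is Euclidean on $\Main_n$.

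\emph{Norms.} We have $n(a)=n(b)=n(c)=n(d)=1$ by hypothesis and $n(e_0)=1$. Since $a$ alternates with $c$ and $d$, Lemma~\ref{lemma:alternative-elements-are-normed} gives $n(ac)=n(a)n(c)=1$ and $n(ad)=1$.

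\emph{Orthogonality to $e_0$.} By Corollary~\ref{corollary:A_n-strongly-alternative-properties}(1), $a,b,c,d$ are pure, so $\langle x,e_0\rangle=\Re(x)=0$ for them. For $ac$, Lemma~\ref{lemma:inner-product-movement} gives $\langle ac,e_0\rangle=\langle a,e_0\bar c\rangle=-\langle a,c\rangle$, and similarly $\langle ad,e_0\rangle=-\langle a,d\rangle$. So these reduce to orthogonalities among $a,b,c,d$.

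\emph{Orthogonality among $a,b,c,d$.} Lemma~\ref{lemma:A_n-alternative-properties}(7), applied with $a$ alternating strongly with $b$, would give $c,d\perp a,b$. However, the standing hypotheses do not require $a$ and $b$ to alternate strongly with each other, so I will argue directly.

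To get $\langle a,c\rangle=0$, write $ac=-\tfrac12[a,c]+\ldots$; more precisely,
\[2\langle a,c\rangle = -(ac+ca)\]
for pure $a,c$. I will show that $ac$ is pure. From $(a,b)(c,d)=0$ we have $ac=db$ (with $\gamma_n=-1$), so $\Re(ac)=\Re(db)$. Combining with $ac=-db$ from Lemma~\ref{lemma:A_n-alternative-properties}(4) yields $\Re(ac)=-\Re(ac)$, hence $\Re(ac)=0$. The same argument handles $ad$ and the remaining pairs. Thus $\langle a,c\rangle=\langle a,d\rangle=\langle b,c\rangle=\langle b,d\rangle=0$.

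The pair $c\perp d$ is item (3) of Lemma~\ref{lemma:A_n-alternative-properties}. Applying that lemma to $(c,d)(a,b)=0$, which holds by Lemma~\ref{lemma:moreno-zero-divisor-conditions} and uses that $c,d$ alternate with $a,b$, gives $a\perp b$. The orthogonality of $ac$ and $ad$ to $e_0$ then follows from the reduction above.

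\emph{Cross terms with $ac,ad$.} Lemma~\ref{lemma:inner-product-movement} gives:
\begin{itemize}
\item $\langle a,ac\rangle=\langle \bar a a,c\rangle=\langle c,e_0\rangle... $ more precisely $\langle a,ac\rangle=n(a)\langle e_0,c\rangle=0$;
\item $\langle c,ac\rangle=\langle c\bar c,a\rangle=\langle e_0,a\rangle=0$;
\item analogously $\langle a,ad\rangle=\langle d,ad\rangle=0$.
\end{itemize}

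For $\langle b,ac\rangle$, use $ac=-db$ to get
\[\langle b,ac\rangle=-\langle b,db\rangle=-\langle b\bar b,d\rangle=-\langle e_0,d\rangle=0.\]
For $\langle d,ac\rangle$, use $\langle d,db\rangle=\langle \bar d d,b\rangle=0$. Similarly, $ad=-da=-bc$, whose orthogonality to $b$ and $c$ follows the same way, and $\langle ad,a\rangle=0$ as above.

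Finally, for $\langle ac,ad\rangle$:
\[\langle ac,ad\rangle=\langle \bar a(ac),d\rangle=\langle n(a)c,d\rangle=0.\]
Here $\bar a(ac)=-a(ac)=-(aa)c=n(a)c$, using $[a,a,c]=0$.

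\emph{Main obstacle.} The delicate step is the orthogonality of $a,b,c,d$ without assuming that $a$ alternates strongly with $b$. This needs careful use of Lemma~\ref{lemma:A_n-alternative-properties} in both orders together with the strong-alternativity relations, and of the anticommutation of pure orthogonal elements given by Lemma~\ref{lemma:A_n-anticomm}. Everything else is a routine application of Lemma~\ref{lemma:inner-product-movement} and the identities $[x,x,y]=0$.
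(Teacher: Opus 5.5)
\textbf{Verdict: there is a genuine gap.} It is in the step you flag yourself, the orthogonality of $a,b$ to $c,d$. The orthogonalities $\langle e_0,ac\rangle=-\langle a,c\rangle$ and $\langle e_0,ad\rangle=-\langle a,d\rangle$ also depend on that step. Your norm computations, $\langle ac,ad\rangle$, and the cross terms $\langle a,ac\rangle,\langle b,ac\rangle,\langle c,ac\rangle,\langle d,ac\rangle$ via Lemma~\ref{lemma:inner-product-movement} are fine and match the paper.

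\textbf{The sign error.} With $\gamma_n=-1$, the first component of $(a,b)(c,d)=0$ reads $ac-\bar{d}b=0$. Since $d$ is pure, this gives $ac=\bar{d}b=-db$, not $ac=db$. That is exactly the relation in Lemma~\ref{lemma:A_n-alternative-properties}(4), so no equation $\Re(ac)=-\Re(ac)$ arises. Indeed, if both $ac=db$ and $ac=-db$ held, you would get $ac=0$, contradicting $n(ac)=1$, which you proved in your first step. Taking real parts of the two components of $(a,b)(c,d)=0$ only gives $\langle a,c\rangle=-\langle b,d\rangle$ and $\langle a,d\rangle=\langle b,c\rangle$, which is not enough. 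Your appeal to the same argument for the remaining pairs fails for the same reason.

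\textbf{The missing idea.} Use Lemma~\ref{lemma:A_n-alternative-properties}(6), which needs only that $a,b$ alternate with $c,d$, and pair $\{a,c,b\}=0$ with $b$. By Lemma~\ref{lemma:inner-product-movement}, $\langle b,(ac)b\rangle=n(b)\langle e_0,ac\rangle=-\langle a,c\rangle$. Also $\langle b,a(cb)\rangle=\langle b\,\overline{cb},a\rangle=\langle b(bc),a\rangle=-\langle c,a\rangle$, since $b$ alternates with $c$. Hence $0=\langle b,\{a,c,b\}\rangle=-2\langle a,c\rangle$. Likewise $\{a,d,b\}=0$, together with $[b,b,d]=0$, gives $\langle a,d\rangle=0$. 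The two real-part relations above then give $\langle b,d\rangle=\langle b,c\rangle=0$.

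\textbf{Comparison with the paper.} The paper simply quotes Corollary~\ref{corollary:A_n-strongly-alternative-properties}(5)--(6). Those items are stated under pairwise strong alternativity, which ultimately comes from Lemma~\ref{lemma:A_n-alternative-properties}(7). Your remark that this is not literally among the standing hypotheses is fair, and the argument above closes that point. As written, however, your proposal does not establish these orthogonalities.
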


\begin{proof}
By Corollary~\ref{corollary:A_n-strongly-alternative-properties}, $a,b,c,d$ are pure and form an orthonormal system, $ac = bd$, $ad = -bc$. By Lemma~\ref{lemma:alternative-elements-are-normed}, $n(ac) = n(a)n(c) = 1$ and $n(ad) = n(a)n(d) = 1$. Moreover, by Lemma~\ref{lemma:inner-product-movement}, we have
\begin{align*}
\langle ac, ad \rangle &= \langle \bar{a}(ac), d \rangle = \langle (\bar{a}a)c, d \rangle = \langle c, d \rangle = 0,\\
\langle e_0, ac \rangle &= \langle \bar{a}, c \rangle = - \langle a, c \rangle = 0,\\
\langle a, ac \rangle &= \langle \bar{a}a, c \rangle = \langle e_0, c \rangle = 0,\\
\langle b, ac \rangle &= \langle b, bd \rangle = \langle \bar{b}b, d \rangle = \langle e_0, d \rangle = 0.
\end{align*}
Reasoning in the same way, we obtain that $e_0,a,b,c,d,ac,ad$ form an orthonormal system.
\end{proof}

\begin{corollary} \label{corollary:double-hexagon}
There exists the following $6$-cycle in $\Gamma_O(\Main_{n+1})$:
$$
(a,b) \leftrightarrow (c,d) \leftrightarrow (ac,ad) \leftrightarrow (a,-b) \leftrightarrow (c,-d) \leftrightarrow (ac,-ad) \leftrightarrow (a,b).
$$
\end{corollary}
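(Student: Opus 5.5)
We obtain this 6-cycle by specializing Corollary~\ref{corollary:A_n-double-hexagon} to $\Main_{n+1}$ and then upgrading directed edges to undirected ones. In $\Main_{n+1}$ we have $\gamma_n = -1$, and by Corollary~\ref{corollary:A_n-strongly-alternative-properties} we get $n(a)=n(b)=n(c)=n(d)=1$. Hence condition~\eqref{equation:norm-condition} holds for $(a,b)$ and $(c,d)$ with $\chi = \gamma_n n(c)/n(d) = -1$. Corollary~\ref{corollary:A_n-double-hexagon} then yields the directed cycle
$$
(a,b) \rightarrow (c,d) \rightarrow (\overline{ac}, da) \rightarrow (a,-b) \rightarrow (c,-d) \rightarrow (\overline{ac}, -da) \rightarrow (a,b)
$$
in $\Gamma_Z(\Main_{n+1})$.

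Next we rewrite the third and sixth vertices. By Corollary~\ref{corollary:A_n-strongly-alternative-properties}, $a$ and $c$ are pure and orthogonal, so by Lemma~\ref{lemma:A_n-anticomm} they anticommute. Therefore $\overline{ac} = \bar{c}\bar{a} = ca = -ac$. Similarly $\Re(a)=\Re(d)=0$ and $\langle a,d\rangle = 0$, which gives $da = -ad$. Thus $(\overline{ac}, da) = -(ac, ad)$ and $(\overline{ac}, -da) = -(ac,-ad)$. Since vertices are lines, these are the vertices $(ac,ad)$ and $(ac,-ad)$ of the statement.

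Now we pass to $\Gamma_O$. By Corollary~\ref{corollary:zero-divisors-symmetry} (equivalently, the first item of Lemma~\ref{lemma:moreno-zero-divisor-conditions}), $xy=0$ implies $yx=0$ in $\Main_{n+1}$, so each directed edge gives an undirected edge of $\Gamma_O(\Main_{n+1})$. All six elements are nonzero, since each has a component of norm $1$ by Lemma~\ref{lemma:alternative-elements-are-normed}, so they lie in $Z(\Main_{n+1}) = Z_{LR}(\Main_{n+1})$.

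It remains to check that the six lines are pairwise distinct, so that we really have a $6$-cycle. This is the only step that needs care, and Lemma~\ref{lemma:orthonormal-system} settles it. The elements $a,b,c,d,ac,ad$ are orthonormal, so the first components $a$, $c$, $ac$ are linearly independent. Hence vertices from different pairs are not proportional. Within a pair, $(x,y)$ and $(x,-y)$ span the same line only if $y=0$, which is impossible because $y$ is a unit vector.
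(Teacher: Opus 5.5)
Your proof is correct and follows the paper's route: specialize Corollary~\ref{corollary:A_n-double-hexagon} with $\chi=-1$, rewrite $\overline{ac}=-ac$ and $da=-ad$ using purity and orthogonality (Lemma~\ref{lemma:orthonormal-system}), and turn directed edges into undirected ones via Corollary~\ref{corollary:zero-divisors-symmetry}. Your check that the six lines are pairwise distinct is a useful detail the paper leaves implicit until Remark~\ref{remark:linear-independence}; note also that $n(a)=n(b)=n(c)=n(d)=1$ is the standing normalization, not a consequence of Corollary~\ref{corollary:A_n-strongly-alternative-properties}, which only gives $n(a)=n(b)$ and $n(c)=n(d)$.
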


\begin{proof}
We use Corollary~\ref{corollary:A_n-double-hexagon} for $\chi = \gamma_n \frac{n(a)}{n(b)} = -1$. By Lemma~\ref{lemma:orthonormal-system}, we have $\overline{ac} = -ac$ and $da = -ad$. Finally, Corollary~\ref{corollary:zero-divisors-symmetry} implies that directed edges of the hexagon in $\Gamma_Z(\Main_{n+1})$ correspond to undirected ones in $\Gamma_O(\Main_{n+1})$.
\end{proof}

\begin{descript} \label{description:double-hexagon}
By using Lemma~\ref{lemma:moreno-tilde} and Corollary~\ref{corollary:double-hexagon} we obtain a subgraph of $\Gamma_O(\Main_{n+1})$ which we call a {\em double hexagon}. It is depicted in Fig.~\ref{figure:double-hexagon}. A double hexagon consists of six bipartite graphs $K_{2,2}$ glued together.
\end{descript}

\begin{figure}[H]
\centering
\includegraphics[width=0.57\linewidth]{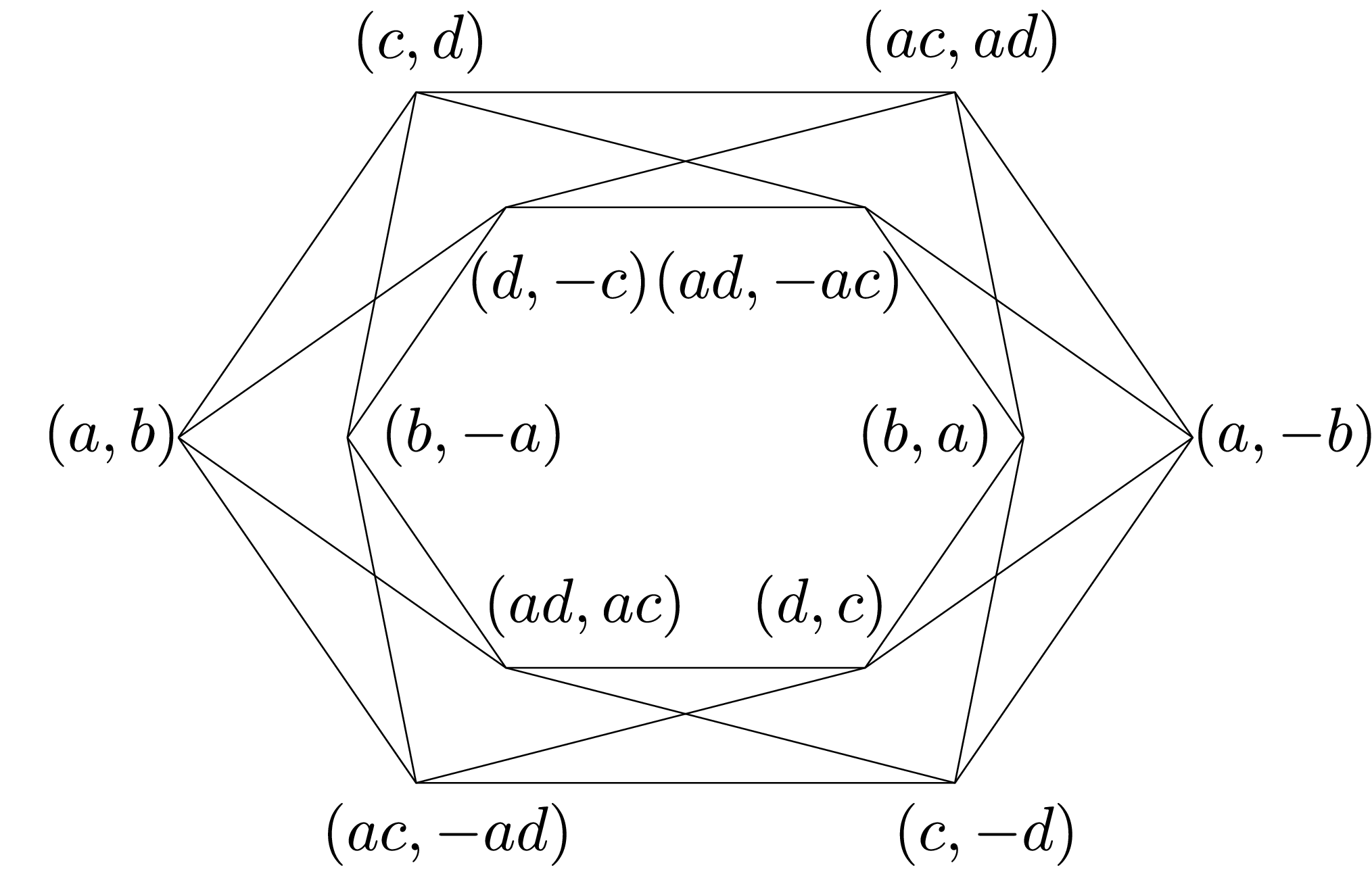}
\caption{\label{figure:double-hexagon} A double hexagon.}
\end{figure}

\begin{remark}
Note that we could start with $(c,d)(a,b) = 0$ instead of $(a,b)(c,d) = 0$ in Corollary~\ref{corollary:double-hexagon}, and we would obtain the same double hexagon but in the counterclockwise direction.
\end{remark}


Our next goal is to find the multiplication table of the vertices of a double hexagon.

\begin{lemma} \label{lemma:products-with-double-hexagon}
The products of $(a,b)$ with the vertices of a double hexagon in Fig.~\ref{figure:double-hexagon} are as follows:
\begin{align*}
    &(a,b)(c,d) = 0, & &(a,b)(c,-d) = 2(ac,ad),\\
    &(a,b)(d,-c) = 0, & &(a,b)(d,c) = 2(ad,-ac),\\
    &(a,b)(ad,ac) = 0, & &(a,b)(ad,-ac) = -2(d,c),\\
    &(a,b)(ac,-ad) = 0, & &(a,b)(ac,ad) = -2(c,-d),\\
    &(a,b)(a,b) = -2(e_0,0), & &(a,b)(a,-b) = 2(0,ab),\\
    &(a,b)(b,-a) = 2(0,e_0), & &(a,b)(b,a) = 2(ab,0).
\end{align*}
\end{lemma}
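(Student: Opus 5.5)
The plan is to verify the twelve products directly from the multiplication rule $(x,y)(z,w) = (xz - \bar{w}y,\, wx + y\bar{z})$ in $\Main_{n+1}$ (here $\gamma_n = -1$). The algebraic input comes from Corollary~\ref{corollary:A_n-strongly-alternative-properties} and Lemma~\ref{lemma:orthonormal-system}. From these, $a,b,c,d,ac,ad$ are pure and of norm $1$, and $a,b,c,d$ anticommute pairwise. We also have $ac = bd$ and $ad = -bc$, together with their conjugated forms $ca = -ac$, $db = -bd = -ac$, $da = -ad$ and $cb = -bc = ad$.

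I will organize the work in three groups, as follows.

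\textbf{The left column (the zeros).} The entry $(a,b)(c,d) = 0$ is the hypothesis. The entries $(a,b)(d,-c) = 0$ and $(a,b)(ad,ac) = 0$ follow from Lemma~\ref{lemma:moreno-tilde}, since $(d,-c) = -\widetilde{(c,d)}$ and $(ad,ac) = \widetilde{(ac,-ad)}$. Hence it suffices to check $(a,b)(ac,-ad) = 0$. This is exactly the edge $(ac,-ad) \leftrightarrow (a,b)$ of Corollary~\ref{corollary:double-hexagon}, combined with the symmetry from Lemma~\ref{lemma:moreno-zero-divisor-conditions}.

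\textbf{The right column, first four rows.} Each of these products has its second factor differing from a zero-product partner by the sign of the second component. I will therefore apply Lemma~\ref{lemma:next-mult-pair}, which gives $(a,b)(x,-y) = 2(ax,-ya)$ whenever $(a,b)(x,y) = 0$.
\begin{itemize}
\item For $(c,-d)$: we get $2(ac,-da) = 2(ac,ad)$.
\item For $(d,c) = (d,-(-c))$: we get $2(ad,ca) = 2(ad,-ac)$.
\item For $(ad,-ac)$: we get $2(a(ad), (ac)a)$. Here alternativity of $a$ with $d$ and $c$ gives $a(ad) = (aa)d = -d$. Likewise $(ac)a = a(ca) = -a(ac) = c$, using flexibility and $[a,a,c] = 0$. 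So the product is $2(-d,c) = -2(d,-c)$.
\end{itemize}
This last value disagrees with the stated $-2(d,c)$ by the sign of the second component. So I must recheck the signs carefully, in particular whether $(ac)a$ equals $c$ or $-c$ and the convention $ca = -ac$. Resolving this is what I expect to be the main obstacle: bookkeeping of signs in the nonassociative products. Here I would lean on Lemma~\ref{lemma:strongly-alternative-system}, which says $ac$ and $da$ alternate strongly with $a,b,c,d$, so that expressions like $(ac)a$, $a(ad)$, $(ac)b$ and $b(ac)$ can be reduced via $ac = bd$ with only associators of alternating pairs. The $(ac,ad)$ entry is handled the same way, starting from the zero $(a,b)(ac,-ad)$.

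\textbf{The last two rows.} These are direct computations.
\begin{itemize}
\item $(a,b)(a,b) = (a^2 - \bar b b,\, ba + ba) = (-1-1,\, 2ba) = (-2,\, -2ab)$. The stated value $-2(e_0,0)$ would require the second component to vanish, which conflicts with the anticommutation $ba = -ab \neq 0$.
\item $(a,b)(a,-b) = (a^2 + \bar b b,\, -ba + ba) = (0,\,0)$, again conflicting with the stated $2(0,ab)$.
\end{itemize}
So here too the sign conventions must be re-examined against the paper's rule $(a,b)(c,d) = (ac + \gamma_n \bar d b,\, da + b\bar c)$. Expanding with $\bar b = -b$ and $\bar a = -a$ gives $(a,b)(a,b) = (a^2 - b^2 \cdot(-1)\ldots)$; I will redo this expansion carefully, entry by entry, and adjust. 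The remaining entries $(b,\mp a)$ reduce to $ab$, $ba$, $a^2 = -1$ and $b^2 = -1$ in the same way.
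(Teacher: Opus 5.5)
Your strategy is the same as the paper's. The four zeros come from the double hexagon (Corollary~\ref{corollary:double-hexagon} with Lemma~\ref{lemma:moreno-tilde}), the first four entries of the right column come from Lemma~\ref{lemma:next-mult-pair}, and the last two rows are direct expansion. The left column and the entries for $(c,-d)$ and $(d,c)$ are handled correctly. As written, however, the proposal does not prove the lemma. For $(ad,-ac)$, $(a,b)$ and $(a,-b)$ you get values that contradict the statement and leave the discrepancy open. The entries for $(ac,ad)$ and $(b,\pm a)$ are only announced, not computed. Each of the three ``conflicts'' is a slip in your own computation, not a problem with the statement. No extra tool such as Lemma~\ref{lemma:strongly-alternative-system} is needed.

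\textbf{Entries $(ad,-ac)$ and $(ac,ad)$.} Lemma~\ref{lemma:next-mult-pair} applied to $(a,b)(ad,ac)=0$ gives $(a,b)(ad,-ac)=2\bigl(a(ad),\,-(ac)a\bigr)$; you dropped the minus sign in the second component. We have $a(ad)=a^2d=-d$. By flexibility and anticommutation, $(ac)a=a(ca)=-a(ac)=c$. So the product is $2(-d,-c)=-2(d,c)$, as stated. Likewise $(a,b)(ac,ad)=2\bigl(a(ac),\,(ad)a\bigr)=2(-c,d)=-2(c,-d)$, since $(ad)a=a(da)=-a(ad)=d$.

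\textbf{Last two rows.} You forgot the conjugation in the second component $wx+y\bar z$ of $(x,y)(z,w)$. With $\bar a=-a$ one gets $(a,b)(a,b)=(a^2-\bar b b,\;ba+b\bar a)=(-2,0)$. Equivalently, $(a,b)$ is pure, so its square is $-n((a,b))=-(n(a)+n(b))=-2$. Next, $(a,b)(a,-b)=(a^2+\bar b b,\;-ba+b\bar a)=(0,-2ba)=2(0,ab)$, because $ba=-ab$. Similarly $(a,b)(b,-a)=(ab+\bar a b,\;-a^2+b\bar b)=(0,2)$ and $(a,b)(b,a)=(ab-\bar a b,\;a^2+b\bar b)=(2ab,0)$.

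With these corrections your argument coincides with the paper's proof.
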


\begin{proof}
By Lemma~\ref{lemma:orthonormal-system}, $a$ and $b$ are pure, so $a^2 = b^2 = -e_0$, and $a,b,c,d,ac,ad$ anticommute pairwise. It follows from Fig.~\ref{figure:double-hexagon} that $(a,b)(c,d) = (a,b)(d,-c) = (a,b)(ad,ac) = (a,b)(ac,-ad) = 0$. We apply Lemma~\ref{lemma:next-mult-pair} to each of these four equalities and obtain the first four identities in the right column. The four remaining equalities can be verified directly.
\end{proof}

\begin{theorem} \label{theorem:neat-basis}
Let
\begin{align*}
    f_0 &= (e_0,0) = e_0, & f_5 &= (c,d), & \til{f_0} &= (0,e_0), & \til{f_5} &= (-d,c),\\
    f_1 &= (a,b), & f_6 &= (a,-b), & \til{f_1} &= (-b,a), & \til{f_6} &= (b,a),\\
    f_2 &= (c,-d), & f_7 &= (0,ab), & \til{f_2} &= (d,c), & \til{f_7} &= (-ab,0),\\
    f_3 &= (ac,ad), & f_8 &= (0,dc), & \til{f_3} &= (-ad,ac), & \til{f_8} &= (-dc,0),\\
    f_4 &= (ac,-ad), & f_9 &= (0,(ac)(ad)), & \til{f_4} &= (ad,ac), & \til{f_9} &= (-(ac)(ad),0).
\end{align*}
Then $F = \{ f_m, \til{f_m} \: | \: m = 0, \dots, 6 \}$ is an orthogonal system in $\Main_{n+1}$ with respect to the inner product $\langle \cdot, \cdot \rangle$. Its multiplication table is given in Table~\ref{table:neat-mult}. The elements $f_7,f_8,f_9,\til{f_7},\til{f_8},\til{f_9}$ are auxiliary here.
\end{theorem}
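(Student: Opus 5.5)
We need two things: that $F$ is an orthogonal system, and the entries of Table~\ref{table:neat-mult}. Throughout we use the hypotheses fixed before Lemma~\ref{lemma:orthonormal-system}: $a,b$ alternate strongly with $c,d$, $(a,b)(c,d)=0$, and all four norms equal $1$.

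\textbf{Orthogonality.} Take the standing setup and apply Lemma~\ref{lemma:orthonormal-system}, which says that $e_0,a,b,c,d,ac,ad$ is orthonormal in $\Main_n$. Since $\Main_{n+1}$ has parameter $-1$, Proposition~\ref{proposition:A_n-euclidean-product} gives
\[
\langle (x,y),(z,w)\rangle=\langle x,z\rangle+\langle y,w\rangle .
\]
Orthogonality of $F$ then reduces to checking pairwise inner products of the components, and each check uses only the orthonormal list above. For example,
\[
\langle f_1,f_6\rangle=n(a)-n(b)=0,\qquad \langle f_1,\til{f_1}\rangle=-\langle a,b\rangle+\langle b,a\rangle=0,
\]
and $\langle f_5,f_2\rangle=n(c)-n(d)=0$. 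Pairs such as $f_3$ versus $f_1$ vanish because $ac,ad$ are orthogonal to $a,b$. Every pair is either a symmetric cancellation of this kind or a sum of inner products of distinct members of the orthonormal list.

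\textbf{Multiplication table.} The aim is to reduce the whole table to Lemma~\ref{lemma:products-with-double-hexagon} by exploiting the symmetry of the double hexagon. Each vertex $f_m$, $m=1,\dots,6$, can play the role of $(a,b)$ in the standing hypotheses: by Corollary~\ref{corollary:double-hexagon}, $f_m$ is orthogonal to its successor. The components of each vertex alternate strongly pairwise by Lemma~\ref{lemma:strongly-alternative-system}, which covers $ac,ad$ (with $da=-ad$). All norms equal $1$ by Lemma~\ref{lemma:alternative-elements-are-normed}.

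Applying Lemma~\ref{lemma:products-with-double-hexagon} to the cyclically relabelled hexagon then gives all products $f_if_j$. For instance, relabelling $(a,b)\mapsto(c,d)$ and $(c,d)\mapsto(ac,ad)$ requires re-expressing products such as $c(ac)$ and $(ac)(ad)$ in terms of the original elements. We do this using alternativity, which holds because the relevant elements alternate strongly pairwise:
\[
c(ac)=-c(ca)=-(cc)a=a .
\]
We also use the identities $ac=bd$ and $ad=-bc$ from Corollary~\ref{corollary:A_n-strongly-alternative-properties}, together with pairwise anticommutation.

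Products involving $\til{f_m}$ follow from Lemma~\ref{lemma:moreno-tilde}, which gives $x\til y=0$ iff $xy=0$. For the nonzero entries we compute directly via $(x,y)(-w,z)$, using the product formula with $\gamma_n=-1$. Products with $f_0,\til{f_0}$ are immediate.

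\textbf{Main obstacle.} The hard step is identifying the auxiliary elements $f_7,f_8,f_9$, i.e.\ $(0,ab)$, $(0,dc)$, $(0,(ac)(ad))$, which appear as the diagonal entries $f_mf_{m+5}$ in the relabelled versions of $(a,b)(a,-b)=2(0,ab)$. Under relabelling these produce products like $dc$ and $(ac)(ad)$. We must show that the products arising from opposite vertices coincide, e.g.\ that $(ad)(ac)$-type terms match $dc$ up to sign.

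The plan here is to use the associator identities of Corollary~\ref{corollary:A_n-strongly-alternative-properties}, namely $[a,c,b]=2d$ and the vanishing anti-associators, to rewrite $(ac)(ad)$ and $dc$ in normal form. A useful check is
\[
(ac)(ad)=-(ac)(da),
\]
which can be evaluated via the Moufang-type consequences of strong alternativity on $\Lin(a,c,d)$.

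I expect this sign bookkeeping to be the most error-prone part. I would handle it by verifying each entry twice: once by the relabelling argument and once by direct component computation.
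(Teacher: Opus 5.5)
Your proposal is essentially the paper's proof. Orthogonality of $F$ comes from Lemma~\ref{lemma:orthonormal-system} together with the Euclidean inner product on $\Main_{n+1}$. The table comes from Lemma~\ref{lemma:products-with-double-hexagon} applied to the cyclically relabelled double hexagon. Your re-expressions such as $c(ac)=a$ and $c(ad)=-b$ are exactly what make the relabelled hexagon coincide with the original one. For the tilde entries, the paper uses $(x,y)(0,e_0)=(-y,x)$ together with the same relabelling. You instead use Lemma~\ref{lemma:moreno-tilde} for the zero entries and direct component computation for the rest; both work.

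The step you call the main obstacle, however, is not needed, and you should drop it. The theorem deliberately keeps $f_7=(0,ab)$, $f_8=(0,dc)$, $f_9=(0,(ac)(ad))$ and their tilde versions as three separate auxiliary elements. Each comes from exactly one pair of opposite vertices, namely $\{f_1,f_6\}$, $\{f_2,f_5\}$, $\{f_3,f_4\}$ (not $f_m,f_{m+5}$): $f_1f_6=2f_7$, $f_5f_2=2(0,cd)=-2f_8$, $f_3f_4=2f_9$. The only sign adjustments needed are $cd=-dc$ and $(ad)(ac)=-(ac)(ad)$, and both follow from anticommutation of orthogonal pure elements. No entry of the table requires $ab=\pm dc$ or $(ac)(ad)=\pm dc$.

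Your proposed justification for that identification would also not be available. Pairwise strong alternativity of $a,c,d$ does not in general make the subalgebra they generate alternative in $\Main_n$ for $n\geq 4$, so you cannot invoke Moufang identities on $\Lin(a,c,d)$. Section~\ref{section:alternative-subalgebras} obtains alternative subalgebras only in the special configuration involving $\til{e}_0$. Once this step is removed, your argument is complete at the same level of detail as the paper's.
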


\begin{proof}
The element $f_0$ is the unity of $\Main_{n+1}$. By Lemma~\ref{lemma:orthonormal-system}, $e_0,a,b,c,d,ac,ad$ form an orthonormal system in $\Main_n$ with respect to the inner product $\langle \cdot, \cdot \rangle$. Hence $F$ is an orthogonal system in $\Main_{n+1}$. It follows from Lemma~\ref{lemma:A_n-anticomm} that the elements of $F \setminus \{ f_0 \}$ anticommute pairwise.

By~\cite[p.~13]{moreno}, for any $(x,y) \in \mathbb{S}$ we have $(x,y)\til{f_0} = (x,y)(0,e_0) = (-y,x)$. Hence $f_m \til{f_0} = \til{f_m}$ and $\til{f_m} \til{f_0} = -f_m$ for all $m = 0, \dots, 9$.

For the rest of the table, we use Lemma~\ref{lemma:products-with-double-hexagon}. The elements $(a,b)$ and $(c,d)$ can be replaced with any other pair of adjacent vertices of a double hexagon, since they represent an arbitrary pair of zero divisors in $\Main_{n+1}$ with the condition of strong alternativity.
\end{proof}

\begin{remark}
According to Lemma~\ref{lemma:moreno-tilde}, the elements $f_m$ and $-\til{f_m}$ belong to the same corner of a double hexagon, $m = 1, \dots, 6$. The element $f_m$ belongs to the outer hexagon, while $-\til{f_m}$ belongs to the inner hexagon.

The elements $f_1$, $f_2$ and $f_3$ form a set of non-adjacent vertices of the outer hexagon, as well as $f_4$, $f_5$ and $f_6$ do.
\end{remark}

\begin{remark} \label{remark:linear-independence}
It follows from Theorem~\ref{theorem:neat-basis} that all elements in the vertices of the double hexagon in Fig.~\ref{figure:double-hexagon} are linearly independent.
\end{remark}

One can see that Table~\ref{table:neat-mult} has a block structure, the blocks being of two types. Some of them are antidiagonal, and the others resemble the multiplication table of the unit quaternions. However, we cannot scale the elements in such a way that one of the blocks becomes a true quaternion table. In particular, the subalgebra $\Lin(f_0,f_1,f_2,f_3)$ is not associative, and thus not isomorphic to $\mathbb{H}$, see~\cite[p.~496]{chan}.

\begin{landscape}
\begin{table}[H]
\centering
$
\begin{array}{|c||c:ccc:ccc:c:ccc:ccc|}
\hline
\vphantom{\Big|} \times & \hphantom{.} f_0 \hphantom{.} & f_1 & f_2 & f_3 & f_4 & f_5 & f_6 & \til{f_0} & \til{f_1} & \til{f_2} & \til{f_3} & \til{f_4} & \til{f_5} & \til{f_6}\\
\hline\hline
\vphantom{\Big|} \hphantom{.} f_0 \hphantom{.} & f_0 & f_1 & f_2 & f_3 & f_4 & f_5 & f_6 & \til{f_0} & \til{f_1} & \til{f_2} & \til{f_3} & \til{f_4} & \til{f_5} & \til{f_6}\\
\hdashline
\vphantom{\Big|} f_1 & f_1 & -2 f_0 & 2 f_3 & -2 f_2 & 0 & 0 & 2 f_7 & \til{f_1} & -2 \til{f_0} & -2 \til{f_3} & 2 \til{f_2} & 0 & 0 & -2 \til{f_7}\\
\vphantom{\Big|} f_2 & f_2 & -2 f_3 & -2 f_0 & 2 f_1 & 0 & 2 f_8 & 0 & \til{f_2} & 2 \til{f_3} & -2 \til{f_0} & -2 \til{f_1} & 0 & -2 \til{f_8} & 0\\
\vphantom{\Big|} f_3 & f_3 & 2 f_2 & -2 f_1 & -2 f_0 & 2 f_9 & 0 & 0 & \til{f_3} & -2 \til{f_2} & 2 \til{f_1} & -2 \til{f_0} & -2 \til{f_9} & 0 & 0\\
\hdashline
\vphantom{\Big|} f_4 & f_4 & 0 & 0 & -2 f_9 & -2 f_0 & -2 f_6 & 2 f_5 & \til{f_4} & 0 & 0 & 2 \til{f_9} & -2 \til{f_0} & 2 \til{f_6} & -2 \til{f_5}\\
\vphantom{\Big|} f_5 & f_5 & 0 & -2 f_8 & 0 & 2 f_6 & -2 f_0 & -2 f_4 & \til{f_5} & 0 & 2 \til{f_8} & 0 & -2 \til{f_6} & -2 \til{f_0} & 2 \til{f_4}\\
\vphantom{\Big|} f_6 & f_6 & -2 f_7 & 0 & 0 & -2 f_5 & 2 f_4 & -2 f_0 & \til{f_6} & 2 \til{f_7} & 0 & 0 & 2 \til{f_5} & -2 \til{f_4} & -2 \til{f_0}\\
\hdashline
\vphantom{\Big|} \til{f_0} & \til{f_0} & -\til{f_1} & -\til{f_2} & -\til{f_3} & -\til{f_4} & -\til{f_5} & -\til{f_6} & -f_0 & f_1 & f_2 & f_3 & f_4 & f_5 & f_6\\
\hdashline
\vphantom{\Big|} \til{f_1} & \til{f_1} & 2 \til{f_0} & -2 \til{f_3} & 2 \til{f_2} & 0 & 0 & -2 \til{f_7} & -f_1 & -2 f_0 & -2 f_3 & 2 f_2 & 0 & 0 & -2 f_7\\
\vphantom{\Big|} \til{f_2} & \til{f_2} & 2 \til{f_3} & 2 \til{f_0} & -2 \til{f_1} & 0 & -2 \til{f_8} & 0 & -f_2 & 2 f_3 & -2 f_0 & -2 f_1 & 0 & -2 f_8 & 0\\
\vphantom{\Big|} \til{f_3} & \til{f_3} & -2 \til{f_2} & 2 \til{f_1} & 2 \til{f_0} & -2 \til{f_9} & 0 & 0 & -f_3 & -2 f_2 & 2 f_1 & -2 f_0 & -2 f_9 & 0 & 0\\
\hdashline
\vphantom{\Big|} \til{f_4} & \til{f_4} & 0 & 0 & 2 \til{f_9} & 2 \til{f_0} & 2 \til{f_6} & -2 \til{f_5} & -f_4 & 0 & 0 & 2 f_9 & -2 f_0 & 2 f_6 & -2 f_5\\
\vphantom{\Big|} \til{f_5} & \til{f_5} & 0 & 2 \til{f_8} & 0 & -2 \til{f_6} & 2 \til{f_0} & 2 \til{f_4} & -f_5 & 0 & 2 f_8 & 0 & -2 f_6 & -2 f_0 & 2 f_4\\
\vphantom{\Big|} \til{f_6} & \til{f_6} & 2 \til{f_7} & 0 & 0 & 2 \til{f_5} & -2 \til{f_4} & 2 \til{f_0} & -f_6 & 2 f_7 & 0 & 0 & 2 f_5 & -2 f_4 & -2 f_0\\
\hline
\end{array}
$
\caption{\label{table:neat-mult} Multiplication table of the vertices of a double hexagon.}
\end{table}
\end{landscape}

\section{Alternative subalgebras} \label{section:alternative-subalgebras}

Similarly to~\cite{moreno, moreno_alternative, moreno_constructing}, we denote $\til{e}_0 = (0,e_0) \in \A_n$ and $\til{a} = a \til{e}_0$ for all $a \in \A_n$. We have already used a particular case of this notation in Lemma~\ref{lemma:moreno-tilde}. We also denote $\Lin(x_1, \dots, x_k) = \mathbb{R} x_1 + \dots + \mathbb{R} x_k$.

\begin{lemma} \label{lemma:tilde-properties}
Let $a,b \in \A_n$, and $b$ be doubly pure. Then
\begin{enumerate}
    \item $\til{\til{a}} = \gamma_{n-1}a$;
    \item $\til{a}b = -\til{ab}$;
\end{enumerate}
If $a$ is also doubly pure, then
\begin{enumerate}
\setcounter{enumi}{2}
    \item $\til{a} \perp a$;
    \item $\til{a}b + \til{b}a = 0$ if and only if $a \perp b$;
    \item $\gamma_{n-1}ab + \til{b}\til{a} = 0$ if and only if $\til{a} \perp b$.
\end{enumerate}
\end{lemma}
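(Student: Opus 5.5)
The plan is to work in coordinates. Write $\A_n = \A_{n-1}\{\gamma_{n-1}\}$, $a = (a_1,a_2)$, $b = (b_1,b_2)$ with $a_i, b_i \in \A_{n-1}$, and $\til{e}_0 = (0,e_0)$.

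\emph{Step 1: a formula for the tilde map.} Directly from the multiplication rule,
$$
\til{a} = (a_1,a_2)(0,e_0) = (\gamma_{n-1}a_2,\ a_1).
$$
Applying this formula twice gives $\til{\til{a}} = \gamma_{n-1}(a_1,a_2) = \gamma_{n-1}a$, which is (1). In particular, $x \mapsto \til{x}$ is a linear bijection, and $\til{e_0} = \til{e}_0$.

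\emph{Step 2: item (2).} We expand both sides, using $\bar b_1 = -b_1$ and $\bar b_2 = -b_2$ since $b$ is doubly pure. For the left side,
$$
\til{a}b = (\gamma_{n-1}a_2,a_1)(b_1,b_2) = \bigl(\gamma_{n-1}(a_2b_1 - b_2a_1),\ \gamma_{n-1}b_2a_2 - a_1b_1\bigr).
$$
For the right side, $ab = (a_1b_1 - \gamma_{n-1}b_2a_2,\ b_2a_1 - a_2b_1)$. Applying the formula of Step 1 to $ab$ and negating gives exactly the same pair. This is a routine check.

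\emph{Step 3: item (3).} Polarizing $n((x,y)) = n(x) - \gamma_{n-1}n(y)$ gives
$$
\langle (x,y),(z,w)\rangle = \langle x,z\rangle - \gamma_{n-1}\langle y,w\rangle .
$$
Hence $\langle \til{a},a\rangle = \gamma_{n-1}\langle a_2,a_1\rangle - \gamma_{n-1}\langle a_1,a_2\rangle = 0$. This does not even use that $a$ is doubly pure.

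\emph{Step 4: item (4).} By (2), applied once to the pair $(a,b)$ and once to the pair $(b,a)$, we get $\til{a}b + \til{b}a = -\til{(ab+ba)}$. Since the tilde map is injective by (1), this vanishes if and only if $ab+ba = 0$. Both $a$ and $b$ are pure, so Lemma~\ref{lemma:A_n-anticomm}\ref{item:A_n-anticomm-pure} says this holds exactly when $\langle a,b\rangle = 0$. The case $a = 0$ is trivial.

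\emph{Step 5: item (5).} This is the only step needing an extra idea. First, $\til{a} = (\gamma_{n-1}a_2,a_1)$ is again doubly pure. So (2), applied with $b$ in the first slot and $\til{a}$ in the second, gives $\til{b}\til{a} = -\til{(b\til{a})}$.

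Next we use that for pure $x,y$ we have $xy + yx = -2\langle x,y\rangle e_0$, which follows from Proposition~\ref{proposition:lambda-form}. Combined with $\til{a}b = -\til{ab}$ from (2), this yields
$$
b\til{a} = \til{ab} - 2\langle b,\til{a}\rangle e_0 .
$$
Applying the tilde map and (1), we obtain
$$
\til{b}\til{a} = -\gamma_{n-1}ab + 2\langle \til{a},b\rangle\,\til{e}_0 .
$$
Therefore $\gamma_{n-1}ab + \til{b}\til{a} = 2\langle\til{a},b\rangle\til{e}_0$, which is zero exactly when $\til{a}\perp b$.

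The main obstacle is this last manipulation: one has to route the computation through (2) and the pure-element anticommutator identity, rather than expanding in coordinates, which would be messy.
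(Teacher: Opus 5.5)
Your proof is correct. Items (1), (2) and (4) follow the paper's argument exactly: the coordinate formula $\til{a} = (\gamma_{n-1}a_2, a_1)$, direct expansion for (2), and injectivity of the tilde map together with Lemma~\ref{lemma:A_n-anticomm} for (4).

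Item (5) is also essentially the paper's route. The paper writes $\til{a}\perp b \iff \til{a}b = -b\til{a}$ via Lemma~\ref{lemma:A_n-anticomm}, then applies the tilde map together with (1) and (2). You instead use $xy+yx = -2\langle x,y\rangle e_0$ for pure $x,y$, which gives the explicit identity $\gamma_{n-1}ab + \til{b}\til{a} = 2\langle \til{a}, b\rangle \til{e}_0$. This is a slightly sharper statement. The same trick turns (4) into $\til{a}b + \til{b}a = 2\langle a,b\rangle \til{e}_0$.

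The genuine difference is item (3). The paper first notes $\til{a} = a\til{e}_0 = -\til{e}_0 a$, which needs $a$ doubly pure. Flexibility then gives $\til{a}a + a\til{a} = [a,\til{e}_0,a] = 0$, and Lemma~\ref{lemma:A_n-anticomm} converts this anticommutation into orthogonality. You instead polarize $n((x,y)) = n(x) - \gamma_{n-1}n(y)$. Your route is more elementary, avoids flexibility, and shows $\til{a}\perp a$ for every $a\in\A_n$, not only doubly pure ones. The paper's route has the small advantage of producing the anticommutation $a\til{a} = -\til{a}a$ directly, which is what is used later (e.g.\ in Lemma~\ref{lemma:quaternionic-subalgebra}). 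For doubly pure $a$, however, that follows from your orthogonality by the same anticentralizer lemma, since $\til{a}$ is then pure.
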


\begin{proof}
Let $a=(a_1,a_2)$, $b=(b_1,b_2)$. By definition, $\til{a} = (a_1,a_2)(0,e_0) = (\gamma_{n-1}a_2,a_1)$.
\begin{enumerate}
\item It holds $\til{\til{a}} = \til{(\gamma_{n-1}a_2,a_1)} = (\gamma_{n-1}a_1, \gamma_{n-1}a_2) = \gamma_{n-1}a$.
\item Since $b$ is doubly pure, we have 
    \begin{multline*}
    \til{a}b = (\gamma_{n-1}a_2,a_1)(b_1,b_2) = (\gamma_{n-1}a_2b_1 + \gamma_{n-1}\bar{b}_2a_1, \gamma_{n-1}b_2a_2 + a_1\bar{b}_1) =\\
    = -(\gamma_{n-1}(b_2a_1 + a_2\bar{b}_1), a_1b_1 + \gamma_{n-1}\bar{b}_2a_2) = -\til{ab}.
    \end{multline*}
\item Since $a$ is doubly pure, $\til{a} = a\til{e}_0 = -\til{e}_0a$, so $\til{a}a + a\til{a} = (a\til{e}_0)a - a(\til{e}_0 a) = [a,\til{e}_0,a] = 0$ by flexibility.
\item By Lemma~\ref{lemma:A_n-anticomm}, $a \perp b$ if and only if $ab = -ba$, which is equivalent to $-\til{a}b = \til{ab} = -\til{ba} = \til{b}a$.
\item By Lemma~\ref{lemma:A_n-anticomm}, $\til{a} \perp b$ if and only if $\til{a}b = -b\til{a}$ or, equivalently, $-\gamma_{n-1}ab = - \til{\til{ab}} = \til{\til{a}b} = -\til{b\til{a}} = \til{b}\til{a}$. \qedhere
\end{enumerate}
\end{proof}

\begin{corollary}
It follows from the explicit form of $\til{a}$ that for $e_j \in \A_n$ we have
$$
\til{e_j} = 
\begin{cases}
e_{j+2^{n-1}}, & 0 \leq j \leq 2^{n-1}-1,\\
\gamma_{n-1}e_{j-2^{n-1}}, & 2^{n-1} \leq j \leq 2^n-1.
\end{cases}
$$
\end{corollary}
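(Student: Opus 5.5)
We will read off the formula from the explicit description of $\til{a}$ established at the start of the proof of Lemma~\ref{lemma:tilde-properties}: for $a = (a_1,a_2) \in \A_n$ with $a_1,a_2 \in \A_{n-1}$ we have $\til{a} = a\til{e}_0 = (a_1,a_2)(0,e_0) = (\gamma_{n-1}a_2, a_1)$. This identity follows directly from the multiplication rule of Definition~\ref{definition:Cayley--Dickson-algebras}. Indeed, $(a_1,a_2)(0,e_0) = (a_1\cdot 0 + \gamma_{n-1}\bar{e}_0 a_2,\ e_0 a_1 + a_2 \bar{0}) = (\gamma_{n-1}a_2, a_1)$, using $\bar{e}_0 = e_0$ and that $e_0$ is the unity of $\A_{n-1}$.

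We then apply this to a standard basis element $e_j = e^{(n)}_j$, using the indexing of Definition~\ref{definition:A_n}. There are two cases.

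\emph{First case, $0 \leq j \leq 2^{n-1}-1$.} Here $e^{(n)}_j = (e^{(n-1)}_j, 0)$. Taking $a_1 = e^{(n-1)}_j$ and $a_2 = 0$ gives $\til{e_j} = (0, e^{(n-1)}_j)$. By Definition~\ref{definition:A_n} this is $e^{(n)}_{j+2^{n-1}}$, and the index lies in the correct range $[2^{n-1}, 2^n-1]$.

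\emph{Second case, $2^{n-1} \leq j \leq 2^n-1$.} Here $e^{(n)}_j = (0, e^{(n-1)}_{j-2^{n-1}})$. The formula gives $\til{e_j} = (\gamma_{n-1}e^{(n-1)}_{j-2^{n-1}}, 0) = \gamma_{n-1}e^{(n)}_{j-2^{n-1}}$, since $0 \leq j-2^{n-1} \leq 2^{n-1}-1$.

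There is no genuine obstacle here. The only point needing care is the index bookkeeping in the embedding of Definition~\ref{definition:A_n}, and remembering that the parameter entering the product at level $n$ is $\gamma_{n-1}$. As a consistency check, the two cases together agree with item (1) of Lemma~\ref{lemma:tilde-properties}, namely $\til{\til{e_j}} = \gamma_{n-1}e_j$.
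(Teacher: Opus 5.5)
Your proof is correct and follows the same route as the paper: you take the explicit formula $\til{a} = (a_1,a_2)(0,e_0) = (\gamma_{n-1}a_2, a_1)$ from the proof of Lemma~\ref{lemma:tilde-properties} and combine it with the indexing of basis elements in Definition~\ref{definition:A_n}. Your two-case index bookkeeping, and the check that the level-$n$ parameter is $\gamma_{n-1}$, are exactly the details the paper leaves implicit.
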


Note that in Lemmas~\ref{lemma:quaternionic-subalgebra},~\ref{lemma:nonspecial-quaternionic-subalgebra}, and~\ref{lemma:octonionic-subalgebra} we allow $n(a)$ and $n(b)$ to be equal to zero, in contrast to the usual definition of Cayley--Dickson algebras.

\begin{lemma} \label{lemma:quaternionic-subalgebra}
Let $a \in \A_n$ be doubly pure. Consider $\mathbb{H}_{a} = \Lin(e_0,a,\til{e}_0,\til{a})$. Then there exists a surjective homomorphism $\phi_a: \A_2 \{ -n(a), \gamma_{n-1}\} \to \mathbb{H}_{a}$, so $\mathbb{H}_{a}$ is associative.
\end{lemma}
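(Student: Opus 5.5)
We define $\phi_a$ on the standard basis of $\A_2\{-n(a),\gamma_{n-1}\}$ by $e^{(2)}_0 \mapsto e_0$, $e^{(2)}_1 \mapsto a$, $e^{(2)}_2 \mapsto \til{e}_0$, $e^{(2)}_3 \mapsto \til{a}$, and extend linearly. Surjectivity is then immediate. Since $\A_2\{-n(a),\gamma_{n-1}\}$ is a generalized quaternion algebra, it is associative. The image of an associative algebra under an algebra homomorphism is associative, so $\mathbb{H}_a$ is associative once $\phi_a$ is shown to be multiplicative. This holds even when $n(a)=0$, because the Cayley--Dickson construction with any nonzero parameters $\gamma_0,\gamma_1$ produces an associative algebra. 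If $n(a)=0$, the parameter $-n(a)$ is zero, and associativity of the formal quaternion multiplication is still a polynomial identity valid for any parameter values. So the whole proof reduces to checking a $4\times 4$ multiplication table.

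The table in $\A_2\{\gamma_0,\gamma_1\}$ reads: $e_1^2=\gamma_0$, $e_2^2=\gamma_1$, $e_3^2=-\gamma_0\gamma_1$, $e_1e_2=e_3=-e_2e_1$, $e_1e_3=\gamma_0e_2=-e_3e_1$, $e_2e_3=-\gamma_1e_1=-e_3e_2$. With $\gamma_0=-n(a)$ and $\gamma_1=\gamma_{n-1}$, we verify the corresponding relations for $a,\til{e}_0,\til{a}$ as follows.
\begin{itemize}
\item Since $a$ is pure, $a^2=-a\bar a=-n(a)e_0$.
\item We have $\til{e}_0^2=(0,e_0)(0,e_0)=(\gamma_{n-1}e_0,0)$, which is also $\til{\til{e}_0}=\gamma_{n-1}e_0$ by Lemma~\ref{lemma:tilde-properties}(1).
\item By definition $a\til{e}_0=\til{a}$.
\item Since $a$ is doubly pure and $\til{e}_0$ is pure with $\langle a,\til{e}_0\rangle=0$, Lemma~\ref{lemma:A_n-anticomm} gives $\til{e}_0a=-\til{a}$.
\item For the products involving $\til{a}$ we use Lemma~\ref{lemma:tilde-properties}(2) with $b=a$: $\til{a}a=-\til{a^2}=n(a)\til{e}_0$. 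Lemma~\ref{lemma:tilde-properties}(3) gives $\til{a}\perp a$, so by Lemma~\ref{lemma:A_n-anticomm} (both are pure) $a\til{a}=-n(a)\til{e}_0=\gamma_0\til{e}_0$, as required.
\item For $\til{e}_0\til{a}$ we write $\til{a}=(\gamma_{n-1}a_2,a_1)$ and compute directly with the doubling formula, or apply Lemma~\ref{lemma:tilde-properties}(2) with $e_0$ in place of $a$ (which needs only $b=\til a$ doubly pure). Here $\til a$ is doubly pure because $a_1,a_2$ are pure. This gives $\til{e}_0\til{a}=-\til{\til{a}}=-\gamma_{n-1}a$, and anticommutativity gives $\til{a}\til{e}_0=\gamma_{n-1}a$. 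These match $e_2e_3=-\gamma_1e_1$ and $e_3e_2=\gamma_1 e_1$.
\item Finally $\til{a}^2=-n(\til{a})e_0$, and $n(\til a)=n((\gamma_{n-1}a_2,a_1))=n(a_2)-\gamma_{n-1}n(a_1)=-\gamma_{n-1}n(a)$, so $\til a^2=\gamma_{n-1}n(a)e_0=-\gamma_0\gamma_1e_0$.
\end{itemize}

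The main obstacle is keeping the signs consistent with the chosen convention for $\A_2$ and confirming that each tilde identity is used only under its doubly-pure hypothesis. In particular, the step needing $\til a$ to be doubly pure must be checked explicitly from $\til a=(\gamma_{n-1}a_2,a_1)$. If a sign mismatches, we would rescale the image of $e^{(2)}_3$ to $\pm\til a$, which changes neither surjectivity nor associativity.
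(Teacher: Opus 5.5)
Your proof is correct and follows essentially the same route as the paper. You verify that $e_0, a, \til{e}_0, \til{a}$ satisfy exactly the multiplication table of the standard basis of $\A_2\{-n(a),\gamma_{n-1}\}$, via Lemma~\ref{lemma:tilde-properties} and anticommutativity from Lemma~\ref{lemma:A_n-anticomm}, then extend the basis map linearly. Your variants for $\til{e}_0\til{a}$ and $\til{a}^2$ are only cosmetic, and your remark that associativity survives the degenerate parameter $-n(a)=0$ makes explicit a point the paper leaves implicit.
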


\begin{proof}
We denote $\mu_1 = n(a)$, $\mu_2 = n(\til{e}_0) = -\gamma_{n-1}$. Since $a$ and $\til{e}_0$ are pure, we have $a^2 = -n(a) = -\mu_1$ and $\til{e}_0^2 = -n(\til{e}_0) = -\mu_2$. The condition $a \in \Lin(e_0,\til{e}_0)^{\perp}$ implies $\til{a} \in \Lin(e_0,\til{e}_0)^{\perp}$. By Lemma~\ref{lemma:tilde-properties}(3), $\til{a} \perp a$, so $a, \til{e}_0, \til{a}$ anticommute pairwise. It remains to show that $\til{a}a = -\til{aa} = \til{\mu_1 e_0} = \mu_1 \til{e}_0$ by Lemma~\ref{lemma:tilde-properties}(2), $\til{a}\til{e}_0 = \til{\til{a}} = \gamma_{n-1} a = -\mu_2 a$ by Lemma~\ref{lemma:tilde-properties}(1), and $\til{a}\til{a} = -\til{a\til{a}} = \til{\mu_1 \til{e}_0} = \mu_1 \gamma_{n-1} = - \mu_1\mu_2$ by Lemma~\ref{lemma:tilde-properties}(1-2). Hence the multiplication table in $\mathbb{H}_{a}$ is given by Table~\ref{table:quaternionic-subalgebra}.

Now we may define $\phi_a: \A_2 \{ -\mu_1, -\mu_2\} \to \mathbb{H}_a$ by $\phi_a(e_0) = e_0$, $\phi_a(e_1) = a$, $\phi_a(e_2) = \til{e}_0$, $\phi_a(e_3) = \til{a}$. The multiplication table in Fig.~\ref{table:quaternionic-subalgebra} coincides with the multiplication table of $\A_2\{ -\mu_1, -\mu_2\}$, and $e_0,e_1,e_2,e_3$ form a basis in $\A_2\{ -\mu_1, -\mu_2\}$. Hence every nontrivial relation in $\A_2 \{ -\mu_1, -\mu_2\}$ is preserved under $\phi_a$, so $\phi_a$ is indeed a homomorphism. Clearly, $\phi_a$ is surjective, since $\mathbb{H}_a = \Lin(e_0,a,\til{e}_0,\til{a})$.
\end{proof}

\begin{remark}
Note that $\phi_a$ in Lemma~\ref{lemma:quaternionic-subalgebra} might have nontrivial kernel even for $a \neq 0$, since it is possible that $a = \til{a}$.
\end{remark}

\begin{corollary}
The element $\til{e}_0$ is strongly alternative in $\A_n$.
\end{corollary}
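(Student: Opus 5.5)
We need to show that $\til{e}_0$ alternates strongly with every $x \in \A_n$, that is, $[\til{e}_0,\til{e}_0,x] = 0$ and $[x,x,\til{e}_0] = 0$ for all $x$. The plan is to reduce the second condition to a lemma already proved, handling the doubly pure part of $x$ through the associative subalgebra $\mathbb{H}_a$ of Lemma~\ref{lemma:quaternionic-subalgebra}.

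\emph{First identity.} Since $\til{e}_0$ is pure with $n(\til{e}_0) = -\gamma_{n-1}$, we have $\til{e}_0^2 = \gamma_{n-1}e_0$, a scalar. Hence $(\til{e}_0\til{e}_0)x = \gamma_{n-1}x$. By definition of the tilde notation and Lemma~\ref{lemma:tilde-properties}(1), $\til{e}_0(\til{e}_0 x) = \til{\til{x}}$-type computations give the same value. To avoid any reliance on $x$ being doubly pure, it is cleanest to compute directly: for $x=(x_1,x_2)$,
$$
(0,e_0)(x_1,x_2) = (\gamma_{n-1}\bar{x}_2,\bar{x}_1),
$$
and applying $(0,e_0)$ once more gives $(\gamma_{n-1}x_1,\gamma_{n-1}x_2) = \gamma_{n-1}x$. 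Thus $[\til{e}_0,\til{e}_0,x]=0$.

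\emph{Second identity.} Write $x = \alpha e_0 + \beta\til{e}_0 + y$, where $y \in \Lin(e_0,\til{e}_0)^{\perp}$, i.e.\ $y$ is doubly pure. The associator is trilinear and vanishes whenever an argument is $e_0$. I would expand $[x,x,\til{e}_0]$ and drop every term containing $e_0$. Because $\til{e}_0$ lies in $\mathbb{H}_y = \Lin(e_0,y,\til{e}_0,\til{y})$, and so do $x$ and the remaining summands, every surviving term is an associator of elements of $\mathbb{H}_y$. Lemma~\ref{lemma:quaternionic-subalgebra} states that $\mathbb{H}_y$ is associative, so each such term vanishes, and $[x,x,\til{e}_0]=0$.

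The only point needing care is that Lemma~\ref{lemma:quaternionic-subalgebra} requires $y$ to be doubly pure, which holds by construction, and allows $n(y)=0$. This is where the general (possibly degenerate) setting could cause trouble, but the lemma explicitly covers it. The step I expect to be the main obstacle is confirming that the first identity holds for arbitrary $x$, not only doubly pure $x$; the direct componentwise computation above settles it.
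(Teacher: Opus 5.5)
Your proof is correct and is essentially the paper's argument. The paper takes the orthogonal projection $x'$ of $x$ onto $\Lin(e_0,\til{e}_0)^{\perp}$, notes that $x,\til{e}_0 \in \mathbb{H}_{x'}$, and gets both $[x,x,\til{e}_0]=0$ and $[\til{e}_0,\til{e}_0,x]=0$ at once from the associativity of $\mathbb{H}_{x'}$ (Lemma~\ref{lemma:quaternionic-subalgebra}). Your separate componentwise check of the first identity and your trilinear expansion are valid but unnecessary, since $x$ itself already lies in the associative subalgebra $\mathbb{H}_y$.
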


\begin{proof}
Let $a \in \A_n$, $a'$ be the orthogonal projection of $a$ onto $\Lin(e_0, \til{e}_0)^{\perp}$. By Lemma~\ref{lemma:quaternionic-subalgebra}, $a'$ and $\til{e}_0$ generate an associative subalgebra $\mathbb{H}_{a'} \subset \A_n$. Clearly, $a \in \mathbb{H}_{a'}$, so $[a,a,\til{e}_0] = [\til{e}_0,\til{e}_0,a] = 0$.
\end{proof}

\begin{table}[H]
\centering
\begin{minipage}{0.5\textwidth}
\centering
$
\begin{array}{|c|cccc|}
\hline
\times          & e_0             & a                     & \til{e}_0 & \til{a}          \\\hline
e_0             & e_0             & a                     & \til{e}_0 & \til{a}          \\
a               & a               & -\mu_1                & \til{a}   & -\mu_1 \til{e}_0 \\
\hphantom{.} \til{e}_0 \hphantom{.} & \hphantom{.} \til{e}_0 \hphantom{.} & -\til{a}        & -\mu_2          & \mu_2 a                \\
\til{a}   & \til{a}   & \hphantom{.} \mu_1 \til{e}_0 \hphantom{.} & \hphantom{.} -\mu_2 a \hphantom{.} & \hphantom{.} -\mu_1 \mu_2 \hphantom{.} \\\hline
\end{array}
$
\caption{\label{table:quaternionic-subalgebra} Multiplication table in $\mathbb{H}_a$.}
\end{minipage}%
\begin{minipage}{0.5\textwidth}
\centering
$
\begin{array}{|c|cccc|}
\hline
\times & e_0 & a        & b        & ab           \\\hline
e_0    & e_0 & a        & b        & ab           \\
a      & a   & -\mu_1   & ab       & -\mu_1 b     \\
b      & b   & \hphantom{.} -ab \hphantom{.} & -\mu_2   & \mu_2 a      \\
\hphantom{.} ab \hphantom{.} & \hphantom{.} ab \hphantom{.} & \mu_1 b  & \hphantom{.} -\mu_2 a \hphantom{.} & \hphantom{.} -\mu_1 \mu_2 \hphantom{.} \\\hline
\end{array}
$
\caption{\label{table:nonspecial-quaternionic-subalgebra} Multiplication table in $\mathbb{H}_{a,b}$.}
\end{minipage}
\end{table}

\begin{lemma} \label{lemma:nonspecial-quaternionic-subalgebra}
Let $a,b \in \A_n$ be pure, $b \perp \Lin(e_0,a)$. Let also $a$ alternate strongly with~$b$. We denote $\mathbb{H}_{a,b} = \Lin(e_0,a,b,ab)$. Then there exists a surjective homomorphism $\psi_{a,b}: \A_2 \{ -n(a), -n(b) \} \to \mathbb{H}_{a,b}$, and thus $\mathbb{H}_{a,b}$ is associative.
\end{lemma}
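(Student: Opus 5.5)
We will follow the proof of Lemma~\ref{lemma:quaternionic-subalgebra}. Set $\mu_1 = n(a)$ and $\mu_2 = n(b)$. The plan is to verify that the products of the spanning elements $e_0, a, b, ab$ reproduce Table~\ref{table:nonspecial-quaternionic-subalgebra}, which is the multiplication table of $\A_2\{-\mu_1,-\mu_2\}$ in the basis $e_0,e_1,e_2,e_3$. We then define $\psi_{a,b}$ by $e_0\mapsto e_0$, $e_1\mapsto a$, $e_2\mapsto b$, $e_3\mapsto ab$ and extend linearly. Since the tables coincide on a basis of the source algebra, $\psi_{a,b}$ is multiplicative, and it is surjective by definition of $\mathbb{H}_{a,b}$. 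Associativity of $\mathbb{H}_{a,b}$ then follows because it is a homomorphic image of the associative algebra $\A_2\{-\mu_1,-\mu_2\}$.

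\textbf{Entries involving only $a$ and $b$.} Since $a$ and $b$ are pure, $a^2=-a\bar a=-\mu_1$ and $b^2=-\mu_2$. Since $b\perp\Lin(e_0,a)$, Lemma~\ref{lemma:A_n-anticomm}\eqref{item:A_n-anticomm-pure} gives $ba=-ab$.

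\textbf{Entries involving $ab$.} Strong alternativity gives $[a,a,b]=0$ and $[b,b,a]=0$. Combined with flexibility, $[a,b,a]=0$ and $[b,a,b]=0$. From these:
\[
a(ab)=a^2b=-\mu_1 b,\qquad (ab)b=ab^2=-\mu_2 a.
\]
For $(ab)a$ we use flexibility and anticommutativity: $(ab)a=a(ba)=-a(ab)=\mu_1 b$. Similarly, $b(ab)=-(ba)b=-b(ab)$ is not directly usable, so instead we write $b(ab)=-b(ba)=-(bb)a=\mu_2 a$, using $[b,b,a]=0$. The right alternative law $[a,b,b]=0$ also holds, because $[a,b,b]=-[b,b,a]$ by flexibility. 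Hence $(ba)b=\ldots$, but the route through $b(ba)$ is the one we will use.

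\textbf{Norm of $ab$ and the square of $ab$.} By Lemma~\ref{lemma:alternative-elements-are-normed}, $n(ab)=\mu_1\mu_2$. By Lemma~\ref{lemma:inner-product-movement}, $\langle e_0,ab\rangle=\langle\bar a,b\rangle=0$, so $ab$ is pure and $(ab)^2=-\mu_1\mu_2$. Alternatively, $(ab)(ab)=-(ab)(ba)$, and expanding via the identities above (a Moufang-type step) should give the same value; the norm argument avoids this. Purity of $ab$ also shows that $\Re$ is consistent with the table, though the homomorphism argument itself only requires the products.

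\textbf{Main obstacle.} The hardest part is justifying each step without full alternativity. Only the relations $[a,a,b]=[b,b,a]=0$, their flexible consequences, and the right alternative versions obtained via conjugation are available. I would check carefully that every rebracketing uses only an associator of the form $[x,x,y]$, $[y,x,x]$ or $[x,y,x]$ with $\{x,y\}=\{a,b\}$. For $[y,x,x]$ this follows from $[x,x,y]$ by conjugation, since $\overline{[x,x,y]}=-[\bar y,\bar x,\bar x]$ and $a,b$ are pure.
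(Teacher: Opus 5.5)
Your proposal is correct and is essentially the paper's proof. You compute the entries of Table~\ref{table:nonspecial-quaternionic-subalgebra} from three ingredients: purity together with anticommutativity via Lemma~\ref{lemma:A_n-anticomm}; the identities $[a,a,b]=[b,b,a]=0$ and their flexible/conjugate consequences, which the paper uses implicitly; and $n(ab)=n(a)n(b)$. You then transfer associativity to $\mathbb{H}_{a,b}$ through the basis-defined surjection from $\A_2\{-n(a),-n(b)\}$.

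Do delete the discarded aside ``$b(ab)=-(ba)b=-b(ab)$'', which is false: flexibility gives $b(ab)=(ba)b$, and both equal $n(b)\,a$. Also delete the unfinished sentence about $(ba)b$.
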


\begin{proof}
We denote $\mu_1 = n(a)$, $\mu_2 = n(b)$. Since $a$ and $b$ are pure, we have $a^2 = -n(a) = -\mu_1$ and $b^2 = -n(b) = -\mu_2$. It follows from Lemma~\ref{lemma:inner-product-movement} that $e_0,a,b,ab$ form an orthogonal system. Then, by Lemma~\ref{lemma:A_n-anticomm}, $a,b,ab$ anticommute pairwise, so $ab$ is also pure.

The element $a$ alternates strongly with $b$, so $a(ab) = a^2b = -\mu_1 b$, $b(ab) = -b(ba) = -b^2a = \mu_2 a$, $(ab)a = -(ba)a = -ba^2 = \mu_1 b$, $(ab)b = ab^2 = -\mu_2 a$. Finally, it follows from Lemma~\ref{lemma:alternative-elements-are-normed} that $(ab)^2 = -n(ab) = -n(a)n(b) = -\mu_1 \mu_2$. Hence the multiplication table in $\mathbb{H}_{a,b}$ is given by Table~\ref{table:nonspecial-quaternionic-subalgebra}.

Then we may define $\psi_{a,b}$ by $\psi_{a,b}(e_0) = e_0$, $\psi_{a,b}(e_1) = a$, $\psi_{a,b}(e_2) = b$, $\psi_{a,b}(e_3) = ab$. The rest of the proof is similar to that of Lemma~\ref{lemma:quaternionic-subalgebra}.
\end{proof}

\begin{remark}
The mapping $\psi_{a,b}$ in Lemma~\ref{lemma:nonspecial-quaternionic-subalgebra} might have nontrivial kernel even for $a \neq 0$ and $b \neq 0$, since it is possible that $ab = 0$ or $a,b,ab$ are linearly dependent.
\end{remark}

The next lemma generalizes~\cite[Theorem 5.1]{moreno_alternative}.

\begin{lemma} \label{lemma:octonionic-subalgebra}
Let $a,b \in \A_n$ be doubly pure, $b \perp \Lin(e_0,a,\til{e}_0,\til{a})$. Let also $a$ alternate strongly with $b$. We denote $\mathbb{O}_{a,b} = \Lin(e_0,a,b,ab,\til{e}_0,\til{a},\til{b},\til{ab})$. Then there exists a surjective homomorphism $\phi_{a,b}: \A_3 \{ -n(a), -n(b), \gamma_{n-1}\} \to \mathbb{O}_{a,b}$, so $\mathbb{O}_{a,b}$ is alternative.
\end{lemma}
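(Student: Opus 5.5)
The plan follows the proofs of Lemma~\ref{lemma:quaternionic-subalgebra} and Lemma~\ref{lemma:nonspecial-quaternionic-subalgebra}. We compute the full $8\times 8$ multiplication table of the spanning elements $e_0,a,b,ab,\til{e}_0,\til{a},\til{b},\til{ab}$. Then we check that it coincides with the table of the standard basis $e_0,\dots,e_7$ of $\A_3\{-\mu_1,-\mu_2,-\mu_3\}$, where $\mu_1=n(a)$, $\mu_2=n(b)$, $\mu_3=-\gamma_{n-1}=n(\til{e}_0)$. The assignment is $e_1\mapsto a$, $e_2\mapsto b$, $e_3\mapsto ab$, $e_4\mapsto \til{e}_0$, $e_5\mapsto\til{a}$, $e_6\mapsto\til{b}$, $e_7\mapsto\til{ab}$. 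The table of $\A_3\{-\mu_1,-\mu_2,-\mu_3\}$ is itself obtained from Table~\ref{table:nonspecial-quaternionic-subalgebra} by one Cayley--Dickson doubling with parameter $-\mu_3$. Once the two tables agree, $\phi_{a,b}$ is defined on the basis and extended linearly, and it is a surjective homomorphism. Alternativity of $\mathbb{O}_{a,b}$ then follows because a homomorphic image of the alternative algebra $\A_3$ is alternative.

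First I establish the quaternionic block. By Lemma~\ref{lemma:nonspecial-quaternionic-subalgebra}, applied to $a,b$ (pure, $b\perp\Lin(e_0,a)$, strongly alternating), $\Lin(e_0,a,b,ab)$ has Table~\ref{table:nonspecial-quaternionic-subalgebra}. Next I show that $ab$ is doubly pure and orthogonal to $\til{e}_0,\til{a},\til{b}$, so that Lemma~\ref{lemma:tilde-properties} applies to all of $a,b,ab$. These orthogonalities come from Lemma~\ref{lemma:inner-product-movement} together with the hypothesis $b\perp\Lin(e_0,a,\til{e}_0,\til{a})$. For example, $\langle ab,\til{e}_0\rangle=\langle b,\bar{a}\til{e}_0\rangle=-\langle b,\til{a}\rangle=0$. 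Similarly, $\langle ab,\til{a}\rangle=\langle b,\bar a\til a\rangle$ reduces via Lemma~\ref{lemma:tilde-properties}(2) to a multiple of $\langle b,\til e_0\rangle=0$. The remaining pairings, such as $\langle \til{b},\til{ab}\rangle$, follow in the same way. I will also need to check that $\til{x}\mapsto$ preserves the inner product up to the factor $-\gamma_{n-1}$, which is immediate from the coordinate formula $\til{x}=(\gamma_{n-1}x_2,x_1)$.

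Second, I fill in the mixed products using Lemma~\ref{lemma:tilde-properties}. Item (2) gives $\til{x}y=-\til{xy}$ for doubly pure $y$. Item (1) gives $\til{\til{x}}=\gamma_{n-1}x$. Items (4) and (5), together with the pairwise orthogonalities, give the products $x\til{y}$ and $\til{x}\til{y}$. Concretely, $x\til{y}=-\til{y}x=\til{yx}$, and $\til{x}\til{y}=-\gamma_{n-1}yx$ whenever $\til{x}\perp y$. The products $\til{x}\til{x}=-\mu_3 n(x)$ come as in Lemma~\ref{lemma:quaternionic-subalgebra}. Each entry is then compared with the doubled quaternion table $(p,q)(r,s)=(pr-\mu_3\bar{s}q,\,sp+q\bar{r})$.

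The main obstacle is sign bookkeeping in the block $\til{x}\til{y}$ and $x\til{y}$, especially entries such as $\til{a}\,\til{ab}$ or $b\,\til{ab}$. There one must confirm that the orthogonality conditions needed in Lemma~\ref{lemma:tilde-properties}(4)--(5) really hold, e.g.\ $\til{a}\perp ab$ and $\til{b}\perp ab$. I would verify these first as a preliminary list of inner products, since the whole table hinges on them. I would also double-check that no associativity beyond the strong alternation of $a$ with $b$ is used, since only Table~\ref{table:nonspecial-quaternionic-subalgebra} and Lemma~\ref{lemma:tilde-properties} are invoked.
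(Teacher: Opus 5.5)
Your proposal is correct and is essentially the paper's proof. The paper also establishes, via Lemma~\ref{lemma:inner-product-movement}, that $e_0,a,b,ab,\til{e}_0,\til{a},\til{b},\til{ab}$ form an orthogonal system. It then combines Table~\ref{table:nonspecial-quaternionic-subalgebra} with Lemma~\ref{lemma:tilde-properties}, using Lemma~\ref{lemma:quaternionic-subalgebra} for the blocks of $a$, $b$ and $ab$, to obtain Table~\ref{table:octonionic-subalgebra}, and defines $\phi_{a,b}$ by your assignment. The only cosmetic difference is that you read the $\til{x}\til{y}$ entries off item~(5) of Lemma~\ref{lemma:tilde-properties}, whereas the paper chains items~(1)--(2), e.g.\ $\til{b}\til{a}=-\til{b\til{a}}=-\til{\til{ab}}=-\gamma_{n-1}ab$; both give the same signs.
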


\begin{proof}
We denote $\mu_1 = n(a)$, $\mu_2 = n(b)$, $\mu_3 = n(\til{e}_0) = -\gamma_{n-1}$. Since $a, b$ and $\til{e}_0$ are pure, we have $a^2 = -n(a) = -\mu_1$, $b^2 = -n(b) = -\mu_2$, and $\til{e}_0^2 = -n(\til{e}_0) = -\mu_3$. We may use Lemma~\ref{lemma:inner-product-movement} to show that $a \in \Lin(e_0,\til{e}_0)^{\perp}$ and $b \perp \Lin(e_0,a,\til{e}_0,\til{a})$ imply that $e_0,a,b,ab,\til{e}_0,\til{a},\til{b},\til{ab}$ form an orthogonal system. Then, by Lemma~\ref{lemma:A_n-anticomm}, $a,b,ab,\til{e}_0,\til{a},\til{b},\til{ab}$ anticommute pairwise. Note that $ab$ is also doubly pure.

By Lemma~\ref{lemma:nonspecial-quaternionic-subalgebra}, there exists a surjective homomorphism $\psi_{a,b}: \A_2 \{ -\mu_1, -\mu_2 \} \to \mathbb{H}_{a,b}$. We now extend it to $\phi_{a,b}: \A_3 \{ -\mu_1, -\mu_2, -\mu_3 \} \to \mathbb{O}_{a,b}$. We may apply Lemma~\ref{lemma:quaternionic-subalgebra} to $a,b$ and $ab$ independently. We then use Lemma~\ref{lemma:tilde-properties}(2) to obtain that $\til{a}b = -\til{ab}$, $\til{a}(ab) = -\til{a(ab)} = \mu_1 \til{b}$, $\til{b}a = -\til{ba} = \til{ab}$, $\til{b}(ab) = -\til{b(ab)} = -\mu_2 \til{a}$, $\til{ab} \cdot a = -\til{(ab)a} = -\mu_1\til{b}$, $\til{ab} \cdot b = -\til{(ab)b} = \mu_2\til{a}$. We use Lemma~\ref{lemma:tilde-properties}(1-2) to get that $\til{b}\til{a} = -\til{b\til{a}} = -\til{\til{ab}} = -\gamma_{n-1}ab = \mu_3 ab$, $\til{ab} \cdot \til{a} = -\til{(ab)\til{a}} = \mu_1 \til{\til{b}} = \mu_1 \gamma_{n-1} b = -\mu_1 \mu_3 b$, and $\til{ab} \cdot \til{b} = -\til{(ab)\til{b}} = -\mu_2 \til{\til{a}} = -\mu_2 \gamma_{n-1} a = \mu_2 \mu_3 a$. Therefore, the multiplication table in $\mathbb{O}_{a,b}$ is given by Table~\ref{table:octonionic-subalgebra}.

Hence we may define $\phi_{a,b}$ by $\phi_{a,b}((e_j,0)) = \psi_{a,b}(e_j)$ and $\phi_{a,b}((0,e_j)) = \til{\psi_{a,b}(e_j)}$ for $0 \leq j \leq 3$. The rest of the proof is similar to that of Lemma~\ref{lemma:quaternionic-subalgebra}.
\end{proof}

\begin{table}[ht]
\centering
$
\begin{array}{|c|cccccccc|}
\hline
\times          & e_0             & a                     & b                     & ab                          & \til{e}_0  & \til{a}          & \til{b}          & \til{ab}               \\\hline
e_0             & e_0             & a                     & b                     & ab                          & \til{e}_0  & \til{a}          & \til{b}          & \til{ab}               \\
a               & a               & -\mu_1                & ab                    & -\mu_1 b                    & \til{a}    & -\mu_1 \til{e}_0 & -\til{ab}        & \mu_1 \til{b}          \\
b               & b               & -ab                   & -\mu_2                & \mu_2 a                     & \til{b}    & \til{ab}         & -\mu_2 \til{e}_0 & -\mu_2 \til{a}         \\
ab              & ab              & \mu_1 b               & \hphantom{.} -\mu_2 a \hphantom{.} & -\mu_1 \mu_2                & \til{ab}   & -\mu_1 \til{b}   & \mu_2 \til{a}    & -\mu_1 \mu_2 \til{e}_0 \\
\til{e}_0 & \til{e}_0 & -\til{a}        & -\til{b}        & -\til{ab}             & -\mu_3           & \mu_3 a                & \mu_3 b                & \mu_3 ab                     \\
\til{a}   & \til{a}   & \mu_1 \til{e}_0 & -\til{ab}       & \mu_1 \til{b}         & -\mu_3 a         & -\mu_1 \mu_3           & \hphantom{.} -\mu_3 ab \hphantom{.} & \mu_1 \mu_3 b                \\
\til{b}   & \til{b}   & \til{ab}        & \mu_2 \til{e}_0 & -\mu_2 \til{a}        & -\mu_3 b         & \mu_3 ab               & -\mu_2 \mu_3           & -\mu_2 \mu_3 a               \\
\hphantom{.} \til{ab} \hphantom{.} & \hphantom{.} \til{ab} \hphantom{.} & \hphantom{.} -\mu_1 \til{b} \hphantom{.} & \mu_2 \til{a}   & \hphantom{.} \mu_1 \mu_2 \til{e}_0 \hphantom{.} & \hphantom{.} -\mu_3 ab \hphantom{.} & \hphantom{.} -\mu_1 \mu_3 b \hphantom{.} & \mu_2 \mu_3 a          & \hphantom{.} -\mu_1 \mu_2 \mu_3 \hphantom{.} \\\hline
\end{array}
$
\caption{\label{table:octonionic-subalgebra} Multiplication table in $\mathbb{O}_{a,b}$.}
\end{table}

\begin{corollary} \label{corollary:alternative-subalgebras}
Let $x,y \in \A_{n-1}$ alternate strongly. Then in $\A_n$ we have
$$
x\til{y} = \til{yx}, \quad \til{x}y = \til{x\bar{y}}, \quad \til{x}\til{y} = \gamma_{n-1}\bar{y}x.
$$
\end{corollary}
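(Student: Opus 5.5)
The plan is to prove all three identities by direct computation from the multiplication rule of Definition~\ref{definition:Cayley--Dickson-algebras}, viewing $\A_n = \A_{n-1}\{\gamma_{n-1}\}$. The strong alternativity hypothesis should only be needed to place the result within the framework of Lemma~\ref{lemma:octonionic-subalgebra}, not for the identities themselves.

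First I identify the objects. An element $x \in \A_{n-1}$ is regarded in $\A_n$ as $(x,0)$, and $\til{e}_0 = (0,e_0)$. Applying $(a,b)(c,d) = (ac + \gamma_{n-1}\bar{d}b,\, da + b\bar{c})$ with $a = x$, $b = 0$, $c = 0$, $d = e_0$ gives
$$
\til{x} = (x,0)(0,e_0) = (0, e_0 x) = (0,x).
$$

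Next I compute each product from the same formula:
\begin{align*}
x\til{y} &= (x,0)(0,y) = (0 + \gamma_{n-1}\bar{y}\cdot 0,\; yx + 0) = (0,yx) = \til{yx},\\
\til{x}y &= (0,x)(y,0) = (0 + 0,\; 0 + x\bar{y}) = (0,x\bar{y}) = \til{x\bar{y}},\\
\til{x}\til{y} &= (0,x)(0,y) = (0 + \gamma_{n-1}\bar{y}x,\; 0 + x\cdot\bar{0}) = (\gamma_{n-1}\bar{y}x, 0) = \gamma_{n-1}\bar{y}x.
\end{align*}
Each line only requires checking which of the four terms in the product formula vanish.

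The only point needing care is consistency with the tilde conventions of Lemma~\ref{lemma:tilde-properties}, where the tilde acts on an element of $\A_n$, so that $\til{\,\cdot\,}$ of a pair means right multiplication by $\til{e}_0$. I would check that for elements embedded from $\A_{n-1}$ this matches the formula $\til{(a_1,a_2)} = (\gamma_{n-1}a_2, a_1)$ from the proof of Lemma~\ref{lemma:tilde-properties}, which for $(x,0)$ indeed gives $(0,x)$.

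As a cross-check I would compare these identities with the entries of Table~\ref{table:octonionic-subalgebra}, taking $x,y \in \{a,b,ab\}$. For instance, with $\mu_3 = -\gamma_{n-1}$, the table gives $\til{b}\til{a} = \mu_3\, ab$, while the third identity gives $\til{b}\til{a} = \gamma_{n-1}\bar{a}b = -\gamma_{n-1}ab = \mu_3\, ab$ for pure orthogonal $a,b$. This explains why the statement is phrased for strongly alternating $x,y$, namely as the general form of the octonionic table. No step here is a real obstacle.
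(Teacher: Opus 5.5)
Your computation is correct, and it takes a genuinely different route from the paper.

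\textbf{The paper's route.} The paper never computes in coordinates. It passes to $x'=\Im(x)$ and to the orthogonal projection $y'$ of $\Im(y)$ onto $\Lin(x')^{\perp}$. It then uses Lemma~\ref{lemma:octonionic-subalgebra} to obtain an alternative subalgebra $\mathbb{O}_{x',y'}$ containing $x$, $y$ and $\til{e}_0$. Each identity is derived from one of the Moufang identities, combined with $\til{z}=z\til{e}_0=\til{e}_0\bar{z}$ and $\til{e}_0^2=\gamma_{n-1}$.

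\textbf{Your route.} You apply $(a,b)(c,d)=(ac+\gamma_{n-1}\bar{d}b,\,da+b\bar{c})$ directly, identifying $x$ with $(x,0)$ and using $\til{x}=(0,x)$. This shows the three identities are just the doubling formulas and hold for all $x,y\in\A_{n-1}$. So the strong alternativity hypothesis is not needed at all. Your route also sidesteps the projection step. When the norm on $\A_{n-1}$ is indefinite, that projection is only well defined if $x'=0$ or $n(x')\neq 0$.

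\textbf{What each approach buys.} The paper's argument is more abstract. It uses only that $x,y,\til{e}_0$ lie in a common alternative subalgebra, that $\til{e}_0^2$ is a nonzero scalar, and that $z\til{e}_0=\til{e}_0\bar{z}$. It would therefore transfer to other elements playing the role of $\til{e}_0$. For the statement as given, your route is shorter and more general.

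Your cross-check against Table~\ref{table:octonionic-subalgebra} is consistent but not needed for the proof.
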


\begin{proof}
Let $x' = \Im(x)$ and $y'$ be the orthogonal projection of $\Im(y)$ onto $\Lin(x')^{\perp}$. Clearly, $x'$ and $y'$ are doubly pure in $\A_n$ and alternate strongly, and also $y' \perp \Lin(e_0,x',\til{e}_0,\\\til{x'})$. By Lemma~\ref{lemma:octonionic-subalgebra}, $x'$, $y'$ and $\til{e}_0$ generate an alternative subalgebra $\mathbb{O}_{x',y'} \subset \A_n$. Hence, by~\cite[pp.~419-420]{moufang}, $\mathbb{O}_{x',y'}$ satisfies the Moufang identities, that is, for any $a,b,c \in \mathbb{O}_{x',y'}$ we have
\begin{align}
c(a(cb)) &= ((ca)c)b; \label{equation:moufang1}\\
a(c(bc)) &= ((ac)b)c; \label{equation:moufang2}\\
(ca)(bc) &= (c(ab))c; \label{equation:moufang3}\\
(ca)(bc) &= c((ab)c). \label{equation:moufang4}
\end{align}
Note that $x, y \in \mathbb{O}_{x',y'}$. For any $z \in \A_{n-1}$ we have $\til{z} = z\til{e}_0 = \til{e}_0\bar{z}$. Then
\begin{align*}
    x\til{y} &= x(\til{e}_0\bar{y}) = \dfrac{1}{\til{e}_0^2}\til{e}_0(\til{e}_0(x(\til{e}_0\bar{y}))) \numeq{\ref{equation:moufang1}} \dfrac{1}{\til{e}_0^2}\til{e}_0(((\til{e}_0x)\til{e}_0)\bar{y}) =\\
    &\qquad\qquad\, = \dfrac{1}{\til{e}_0^2}\til{e}_0(((\bar{x}\til{e}_0)\til{e}_0)\bar{y}) = \dfrac{\til{e}_0^2}{\til{e}_0^2}\til{e}_0(\bar{x}\bar{y}) = \til{e}_0(\overline{yx}) = \til{yx},\\
    \til{x}y &= (x\til{e}_0)y = \dfrac{1}{\til{e}_0^2}(((x\til{e}_0)y)\til{e}_0)\til{e}_0 \numeq{\ref{equation:moufang2}} \dfrac{1}{\til{e}_0^2}(x(\til{e}_0(y\til{e}_0)))\til{e}_0 =\\
    &\qquad\qquad\, = \dfrac{1}{\til{e}_0^2}(x(\til{e}_0(\til{e}_0\bar{y})))\til{e}_0 = \dfrac{\til{e}_0^2}{\til{e}_0^2}(x\bar{y})\til{e}_0 = \til{x\bar{y}},\\
    \til{x}\til{y} &= (x\til{e}_0)(y\til{e}_0) = (\til{e}_0\bar{x})(y\til{e}_0) \numeq{\ref{equation:moufang3}} (\til{e}_0(\bar{x}y))\til{e}_0 = ((\overline{\bar{x}y})\til{e}_0)\til{e}_0 = (\bar{y}x)\til{e}_0^2 = \gamma_{n-1}\bar{y}x. \tag*{\qedhere}
\end{align*}
\end{proof}

\section{Zero divisors of the form $\left[\left(e_i^{(n)}, \pm e_j^{(n)}\right)\right]$} \label{section:basis-pairs}

\subsection{Zero divisor classification}

\begin{notation}
We will need the following subsets of $Z(\A_{n+1})$:
\begin{align*}
Z_e(\A_{n+1}) &= \left\{ \left(e_i^{(n)}, \pm e_j^{(n)}\right) \in Z(\A_{n+1}) \right\} = (\E_n \times (\pm \E_n)) \cap Z(\A_{n+1});\\
Z_e'(\A_{n+1}) &= \left\{ \left(e_i^{(n)}, \pm e_j^{(n)}\right) \in Z(\A_{n+1}) \; \big| \; i \neq 0 \right\} = (\E'_n \times (\pm \E_n)) \cap Z(\A_{n+1}).
\end{align*}

Then $\Gamma_e(\A_{n+1})$ denotes a subgraph of $\Gamma_O'(\A_{n+1})$ on the vertex set $P(Z_e'(\A_{n+1}))$.
\end{notation}

For simplicity of notation, below we define zero divisors of $\A_{n+1}$ up to multiplication by $\pm 1$.
By Proposition~\ref{proposition:orthogonality-condition}, in order to prove that $a,b \in Z_e'(\A_{n+1})$ are orthogonal, it is sufficient to show that $ab = 0$, that is, we verify that only one product is equal to zero.

\begin{lemma} \label{lemma:basis-pairs-zero-divisors-relation}
Let $(a,b), (c,d) \in Z_e(\A_{n+1})$, $(a,b)(c,d) = 0$. Then $ab = \pm cd$.
\end{lemma}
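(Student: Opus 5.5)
We work directly with the zero-divisor equations. Write $(a,b)(c,d) = (ac + \gamma_n \bar{d}b,\ da + b\bar{c}) = 0$, so that
$$
ac = -\gamma_n \bar{d}b, \qquad da = -b\bar{c}.
$$
Here $a,c \in \E_n$ and $b,d \in \pm\E_n$ are basis elements up to sign.

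The key input is a pair of standard facts about products of basis elements in $\A_n$. First, $e_ie_j = \pm\lambda e_{i\oplus j}$ with $\lambda$ a product of parameters $\gamma_k$. Since we reduced to $\gamma_k \in \{\pm 1\}$, the product of two basis elements is again a basis element up to sign. Second, any two basis elements (or their conjugates) generate an associative subalgebra, namely a copy of some $\mathbb{H}_{a,b}$ or a quaternion-type algebra. In particular each basis element is invertible with $x^{-1} = \bar{x}/n(x)$ and $n(x) = \pm 1$. Moreover, for basis elements $x,y$ we have $\bar{x}(xy) = n(x)y$ and $(yx)\bar{x} = n(x)y$; these are alternativity relations of the same kind used in Lemma~\ref{lemma:alternative-elements-are-normed}.

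From $da = -b\bar{c}$, multiply on the right by $c$. Since $b$ and $c$ are basis elements, $(b\bar{c})c = n(c)b$, which gives
$$
(da)c = -n(c)b.
$$
Similarly, from $ac = -\gamma_n\bar{d}b$, multiply on the left by $d$ and use $d(\bar{d}b) = n(d)b$, which gives
$$
d(ac) = -\gamma_n n(d)b.
$$
The three elements $d,a,c$ are basis elements up to sign, so $(da)c = \pm d(ac)$: the associator of three basis elements is either zero or twice the product, a standard sign property of Cayley--Dickson basis triples. In either case the two expressions above are consistent only when $b = \pm d(ac)$ up to a nonzero scalar. Hence $b$ is, up to sign, the basis element with index $i_d \oplus i_a \oplus i_c$, where $\oplus$ denotes XOR of indices.

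Now compute $ab$ and $cd$ at the level of indices. The index of $ab$ is $i_a \oplus i_b = i_d \oplus i_c$, which is exactly the index of $cd$. Since both products are $\pm$ a basis element (using $\gamma_k = \pm1$), and they share the same index, we get $ab = \pm cd$.

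The main obstacle is justifying the identities $(b\bar c)c=n(c)b$ and $(da)c=\pm d(ac)$ for signed basis elements in an arbitrary $\A_n$. For the first, I would cite the fact that basis elements are alternative in every $\A_n$, via the corollary that $\til{e}_0$ is strongly alternative, applied inductively. For the second, I would argue by induction on $n$, using the explicit multiplication formula. If this becomes awkward, an alternative is to bypass associativity entirely: since $ab$ and $cd$ are both $\pm$ basis elements, only the index identity $i_a\oplus i_b = i_c\oplus i_d$ is needed. That identity follows directly from comparing indices in $ac = -\gamma_n\bar{d}b$, since the index of $ac$ is $i_a\oplus i_c$ and the index of $\bar{d}b$ is $i_d\oplus i_b$. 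Because this index comparison is immediate, I would in fact present the proof that way.
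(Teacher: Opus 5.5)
Your final argument, the index comparison, is correct, and it takes a genuinely different and more elementary route than the paper.

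The paper argues multiplicatively. It rewrites the first component as $\pm ca \pm db = 0$, using that basis elements commute or anticommute. It then invokes the quasi-associativity $(ca)b = \pm c(ab)$ of basis triples (after Brown). Finally it multiplies by $\bar b$ on the right and by $\bar c$ on the left, using alternativity and $n(e_j)=\pm 1$, to reach $\pm ab \pm cd = 0$.

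You use instead the $\mathbb{Z}_2^n$-grading of the standard basis, $e_ie_j = \pm e_{i\oplus j}$ when $\gamma_k = \pm 1$. The equality of the nonzero elements $ac$ and $-\gamma_n \bar d b$ then forces $i_a \oplus i_c = i_d \oplus i_b$. Hence $ab$ and $cd$ are signed copies of the same basis element. This needs neither the associator fact nor alternativity.

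The one thing you should write out rather than call standard is the grading itself. It is a one-line induction from the four products $(x,0)(y,0)=(xy,0)$, $(x,0)(0,y)=(0,yx)$, $(0,x)(y,0)=(0,x\bar y)$, $(0,x)(0,y)=(\gamma_n\bar y x,0)$, together with $\overline{e_j}=\pm e_j$.

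What the paper's route buys is that it is phrased through structural identities (alternativity, norms, quasi-associativity) rather than indices. That is why it parallels the sedenion identity $n(c)ab=-n(a)cd$ and could adapt to non-basis elements satisfying those identities. Your argument, by contrast, is tied to the standard basis. Both proofs use only the first component of $(a,b)(c,d)=0$.

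In the write-up, drop your first route. The claim that an associator of basis elements is ``zero or twice the product'' is unnecessary, since $(da)c=-n(c)b$ alone already fixes the index of $b$. The ``consistent only when'' step there is also muddled.
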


\begin{proof}
By~\cite[Lemma~4]{schafer}, $e_j^{(n)}$ is alternative in $\A_n$. Moreover, for any elements $e_j^{(n)}$ and $e_k^{(n)}$ we always have $\overline{e_j^{(n)}} = \pm e_j^{(n)}$, $n(e_j^{(n)}) = \pm 1$ and $e_j^{(n)} e_k^{(n)} = \pm e_k^{(n)} e_j^{(n)}$.

The condition $(a,b)(c,d) = (ac + \gamma_n \bar{d}b, da + b\bar{c}) = 0$ implies $0 = ac + \gamma_n \bar{d}b = \pm ca \pm db$. Similarly to~\cite[Theorem 6.14]{brown}, we have $(ca)b = \pm c(ab)$. Then $0 = (\pm ca \pm db)\bar{b} = \pm (ca)\bar{b} \pm (db)\bar{b} = \pm (ca)b \pm n(b) d = \pm c(ab) \pm d$. Hence $0 = \bar{c}(\pm c(ab) \pm d) = \pm n(c)(ab) \pm \bar{c}d = \pm ab \pm cd$, so $ab = \pm cd$.
\end{proof}

\begin{corollary} \label{corollary:component-product-path}
Let $(a,b), (c,d) \in Z_e(\A_{n+1})$ belong to the same connected component of $\Gamma_e(\A_{n+1})$. Then $ab = \pm cd$.
\end{corollary}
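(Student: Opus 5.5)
The plan is to induct on the distance in $\Gamma_e(\A_{n+1})$, using Lemma~\ref{lemma:basis-pairs-zero-divisors-relation} as the single-edge step. Since $(a,b)$ and $(c,d)$ lie in the same connected component of $\Gamma_e(\A_{n+1})$, there is a path
$$
(a,b) = (x_0,y_0) - (x_1,y_1) - \dots - (x_k,y_k) = (c,d)
$$
whose vertices all lie in $Z_e'(\A_{n+1})$, and consecutive vertices are orthogonal. Vertices are lines, so each $(x_m,y_m)$ is determined only up to a nonzero real scalar. Because we work up to sign with elements of $\E_n \times (\pm\E_n)$, I will fix representatives of the form $(e_i, \pm e_j)$; the conclusion $ab = \pm cd$ is insensitive to these sign choices.

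For each edge, orthogonality gives in particular $(x_m,y_m)(x_{m+1},y_{m+1}) = 0$. Both elements belong to $Z_e(\A_{n+1})$, so Lemma~\ref{lemma:basis-pairs-zero-divisors-relation} applies directly and yields $x_m y_m = \pm x_{m+1} y_{m+1}$ for every $m = 0, \dots, k-1$. Chaining these $k$ equalities gives $x_0 y_0 = \pm x_k y_k$, that is, $ab = \pm cd$. The case $k = 0$ is trivial, since then $(a,b) = \pm(c,d)$ and $ab = cd$.

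I expect no real obstacle here; the only care needed is bookkeeping. First, edges of $\Gamma_e(\A_{n+1})$ are edges of $\Gamma_O$, so the product is zero in the order needed; in fact either order suffices, because the lemma is symmetric in its conclusion. Second, replacing $(c,d)$ by $-(c,d)$ changes $cd$ to $(-c)(-d) = cd$, and replacing it by $(c,-d)$ only flips a sign, so the statement is well defined on lines and with respect to the $\pm$ normalization.
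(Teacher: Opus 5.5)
Your proof is correct and is essentially the paper's argument: take a path in $\Gamma_e(\A_{n+1})$ from $(a,b)$ to $(c,d)$ and chain the single-edge relation of Lemma~\ref{lemma:basis-pairs-zero-divisors-relation} by induction on the path length. Your extra remarks on choosing representatives $(e_i,\pm e_j)$ of the lines make explicit what the paper leaves implicit, and they do not change the argument.
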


\begin{proof}
Let $P$ be a path of length $k$ in $\Gamma_e(\A_{n+1})$ from $(a,b)$ to $(c,d)$. Then the statement follows immediately from Lemma~\ref{lemma:basis-pairs-zero-divisors-relation} by induction on $k$.
\end{proof}

\begin{notation}
We denote by $C(e_j^{(n)})$ the subgraph of $\Gamma_e(\A_{n+1})$ on the set of vertices $(a,b)$ such that $ab = \pm e_j^{(n)}$.
\end{notation}

It follows from Corollary~\ref{corollary:component-product-path} that $C(e_j^{(n)})$ and $C(e_k^{(n)})$, $j \neq k$, are not connected to each other in $\Gamma_e(\A_{n+1})$.

By~\cite[Lemma~4]{schafer}, $e^{(n)}_j$ is always alternative in $\A_n$, so $Z_e(\A_{n+1}) \subseteq DA(\A_{n+1})$. Moreover, for $(a,b) \in Z_e(\A_{n+1})$ we always have $n(a) = \pm 1$ and $n(b) = \pm 1$, and thus $(a,b)$ satisfies condition~\eqref{equation:norm-condition} for some $\chi = \pm 1$, see Remark~\ref{remark:norm-condition}. The values of $\chi$ can be different for different elements of $Z_e(\A_{n+1})$, so it becomes a function of a zero divisor: $\chi = \chi((a,b))$. For simplicity, we will omit the argument of $\chi$ if it is clear of the context.

As follows from Remark~\ref{remark:condition-asterisk}, one may prove by induction on the length of path that $\chi$ is constant on any connected component of $\Gamma_e(\A_{n+1})$. The value $\chi = \chi(C)$ is called the {\em characteristic} of the connected component $C \subseteq \Gamma_e(\A_{n+1})$. Below we determine the value of $\chi$ for every vertex of $C(e^{(n)}_j)$ even in the case when it is not connected.

\begin{proposition} \label{proposition:chi-value}
For any $x \in \E_n$ we have $\chi(C(x)) = \gamma_n n(x)$.
\end{proposition}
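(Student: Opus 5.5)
Fix a vertex $(a,b)$ of $C(x)$, so $(a,b)\in Z_e(\A_{n+1})$ and $ab=\pm x$. I will show that $\chi((a,b))=\gamma_n n(x)$ directly, without going through connectivity; this also covers the case when $C(x)$ is not connected. By Remark~\ref{remark:norm-condition}, since $n(a),n(b)\in\{\pm1\}$, we have
\[
\chi((a,b))=\gamma_n\frac{n(a)}{n(b)}=\gamma_n n(a)n(b),
\]
because $n(b)^2=1$. By Lemma~\ref{lemma:alternative-elements-are-normed}, and since $a=e_i^{(n)}$ is alternative by~\cite[Lemma~4]{schafer}, we get $n(ab)=n(a)n(b)$. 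The norm is quadratic, so $n(\pm x)=n(x)$, and therefore $n(x)=n(a)n(b)$. Hence $\chi((a,b))=\gamma_n n(x)$.

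Two points need care. The first is that $\chi$ is defined through condition~\eqref{equation:norm-condition} for the pair $(a,b)$ itself. So I will restate Remark~\ref{remark:norm-condition} with $(a,b)$ in place of $(c,d)$: there $n(a)^2=n(b)^2=1\neq 0$, and $\chi=\gamma_n n(a)/n(b)$. The second is that an element of $C(x)$ is defined up to sign, so $ab$ equals $x$ or $-x$. The norm removes the sign.

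I do not expect a real obstacle. The only subtle point is to make sure that Lemma~\ref{lemma:alternative-elements-are-normed} applies. It requires only that $a$ alternates with $b$, and this holds because basis elements are alternative. Finally, I will note that the value $\gamma_n n(x)$ does not depend on the chosen vertex, which confirms that $\chi$ is constant on $C(x)$ and so justifies the notation $\chi(C(x))$.
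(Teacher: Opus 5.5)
Your proposal is correct and follows essentially the same argument as the paper. You fix a vertex with $ab=\pm x$, use $n(b)=\pm 1$ to rewrite $\chi=\gamma_n n(a)/n(b)$ as $\gamma_n n(a)n(b)$, and then apply Lemma~\ref{lemma:alternative-elements-are-normed} (valid because basis elements are alternative) to obtain $\gamma_n n(ab)=\gamma_n n(\pm x)=\gamma_n n(x)$.
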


\begin{proof}
Let $(a,b) \in V(C(x))$, that is, $ab = \pm x$. Since $n(b) = \pm 1$ and both $a$ and $b$ are alternative, we obtain by Lemma~\ref{lemma:alternative-elements-are-normed} that
\[
\chi = \gamma_n \frac{n(a)}{n(b)} = \gamma_n n(a)n(b) = \gamma_n n(ab) = \gamma_n n(\pm x) = \gamma_n n(x). \tag*{\qedhere}
\]
\end{proof}

\begin{remark} \label{remark:component-types}
We consider four groups of elements $(x,y) \in Z_e'(\A_{n+1})$, depending on the type of $xy$. In this classification we assume that $e_0,a,b,c$ are linearly independent elements in $\pm \E_{n-1}$, and $ab = \pm c$.
\begin{enumerate}[label=(\Roman*)]
    \item If $xy \in \Lin(e_0)$, then $(x,y)$ is of the form $(a, \pm a)$, $(\til{a}, \pm \til{a})$, or $(\til{e}_0, \pm \til{e}_0)$.\label{item:component-type-1}
    \item If $xy \in \Lin(\til{e}_0)$, then $(x,y)$ is of the form $(a, \pm \til{a})$, $(\til{a}, \pm a)$, or $(\til{e}_0, \pm e_0)$.\label{item:component-type-2}
    \item If $xy \in \Lin(c)$, then $(x,y)$ is of the form $(c, e_0)$, $(\til{c}, \til{e}_0)$, $(\til{e}_0, \til{c})$, $(a, b)$, or $(\til{a}, \til{b})$.\label{item:component-type-3}
    \item If $xy \in \Lin(\til{c})$, then $(x,y)$ is of the form $(\til{c}, e_0)$, $(c, \til{e}_0)$, $(\til{e}_0, c)$, $(a, \til{b})$, or $(\til{a}, b)$.\label{item:component-type-4}
\end{enumerate}
By Corollary~\ref{corollary:component-product-path}, elements of distinct types (or of the same type but with linearly independent values of $c$) belong to distinct connected components of $\Gamma_e(\A_{n+1})$.
\end{remark}

\subsection{Establishing orthogonality conditions}

We now determine the conditions under which two elements of $Z_e'(\A_{n+1})$ are orthogonal.

Two following important propositions establish the ``almost equivalence'' of $(a,b)$, $(b, \gamma_n a)$, $(\til{a}, \til{b})$ and $(\til{b}, \gamma_n a)$ where $a,b,\til{a},\til{b}$ are linearly independent elements in $\pm \E''_n$. These elements are not properly equivalent in the sense that the sets of their neighbours do not coincide. However, all four elements have the same neighbours whose both components do not belong to $\mathbb{O}_{a,b}$, see Lemma~\ref{lemma:distinct-nonspecial-element}.

\begin{proposition} \label{proposition:shift-equivalence}
Let $a,b,c,d \in \A_n$ be pure. Then $(a,b)$ is orthogonal to $(c,d)$ if and only if it is orthogonal to $(d, \gamma_n c)$.
\end{proposition}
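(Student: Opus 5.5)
By Proposition~\ref{proposition:orthogonality-condition}, two pure elements are orthogonal exactly when both products vanish. I will therefore write out the four products $(a,b)(c,d)$, $(c,d)(a,b)$, $(a,b)(d,\gamma_n c)$, $(d,\gamma_n c)(a,b)$ componentwise. Then I will check that the two systems of equations describing orthogonality coincide, using only the purity of $a,b,c,d$ and conjugation. As noted in Section~\ref{subsection:A_n-properties}, we may assume $\gamma_n = \pm 1$, so $\gamma_n^2 = 1$. Since $a,b,c,d$ are pure, $\bar x = -x$ for each of them.

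\textbf{Step 1 (the pair $(a,b)$, $(c,d)$).} The multiplication rule gives
\begin{align*}
(a,b)(c,d) &= (ac - \gamma_n db,\; da - bc),\\
(c,d)(a,b) &= (ca - \gamma_n bd,\; bc - da).
\end{align*}
Hence $(a,b)$ and $(c,d)$ are orthogonal if and only if
$$
ac = \gamma_n db, \quad ca = \gamma_n bd, \quad da = bc.
$$

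\textbf{Step 2 (the pair $(a,b)$, $(d,\gamma_n c)$).} Using $\overline{\gamma_n c} = -\gamma_n c$ and $\gamma_n^2 = 1$,
\begin{align*}
(a,b)(d,\gamma_n c) &= (ad - cb,\; \gamma_n ca - bd),\\
(d,\gamma_n c)(a,b) &= (da - bc,\; bd - \gamma_n ca).
\end{align*}
Hence $(a,b)$ and $(d,\gamma_n c)$ are orthogonal if and only if
$$
ad = cb, \quad ca = \gamma_n bd, \quad da = bc.
$$

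\textbf{Step 3 (comparing the systems).} Both systems contain $ca = \gamma_n bd$ and $da = bc$. It remains to recover the single differing equation of each system from these two common ones, and conjugation does this. Conjugating $ca = \gamma_n bd$ gives $\bar a\bar c = \gamma_n \bar d \bar b$, and purity turns this into $ac = \gamma_n db$. Likewise, conjugating $da = bc$ gives $\bar a \bar d = \bar c \bar b$, that is, $ad = cb$. Thus each system is equivalent to the pair $\{ca = \gamma_n bd,\ da = bc\}$, so the two systems are equivalent, which proves the statement.

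There is no real obstacle here. The only care needed is in the signs when expanding $\gamma_n \overline{(\gamma_n c)}\,b$ and $\overline{(\gamma_n c)}$ and when conjugating products of pure elements. The normalization $\gamma_n = \pm 1$ is what makes $\gamma_n^2 = 1$ and lets $\gamma_n ca = bd$ be rewritten as $ca = \gamma_n bd$.
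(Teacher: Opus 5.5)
Your proof is correct and uses the same idea as the paper: expand the products componentwise, then use the fact that conjugating products of pure elements turns $da = bc$ into $ad = cb$ and $ca = \gamma_n bd$ into $ac = \gamma_n db$. The paper is more economical. It computes only $(a,b)(c,d)$ and $(a,b)(d,\gamma_n c)$, noting that the components of the latter are $\overline{da-bc}$ and $\gamma_n\overline{ac-\gamma_n db}$, and it relies on the fact that for pure elements a single vanishing product already gives orthogonality. You instead write out all four products, which makes your version self-contained at the cost of some redundancy.
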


\begin{proof}
Since $\gamma_n^2 = 1$, we have $(a,b)(c,d) = (ac + \gamma_n \bar{d}b, da + b\bar{c}) = (ac - \gamma_n db, da - bc)$
\\
and $(a,b)(d,\gamma_n c) = (ad + \gamma_n \overline{(\gamma_n c)}b, \gamma_n ca + b\bar{d}) = (ad - cb, \gamma_n(ca - \gamma_n bd)) =
\\
(\overline{(da - bc)}, \gamma_n \overline{(ac - \gamma_n db)})$. Then $(a,b)(c,d) = 0$ if and only if $(a,b)(d,\gamma_n c) = 0$.
\end{proof}

\begin{proposition} \label{proposition:tilde-equivalence}
Let $a,\til{a},b,\til{b},c,\til{c},d,\til{d}$ be linearly independent elements in $\pm \E''_{n}$. Then $(a,b)$ is orthogonal to $(c,d)$ if and only if it is orthogonal to $(\til{c},\til{d})$.

Consequently, $(a,b)$ is orthogonal to $(c,d)$ if and only if $(\til{a},\til{b})$ is orthogonal to $(\til{c},\til{d})$.
\end{proposition}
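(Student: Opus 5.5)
The plan is a direct computation. I will show that the product $(a,b)(\til{c},\til{d})$ is obtained from $(a,b)(c,d)$ by applying the injective map $x \mapsto \til{x}$ to each component, up to sign. Since all elements are pure, Proposition~\ref{proposition:orthogonality-condition} says orthogonality is equivalent to a single product vanishing, so it suffices to compare these two products.

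\textbf{Preliminary facts.} Since $a,\til{a},b,\til{b},c,\til{c},d,\til{d}$ are linearly independent elements of $\pm\E''_n$, they are doubly pure, hence pure, so $\bar{x}=-x$ for each of them. Distinct standard basis elements are orthogonal with respect to $\langle\cdot,\cdot\rangle$ in every $\A_n$ (the form is diagonal in the standard basis, cf.\ Proposition~\ref{proposition:A_n-euclidean-product} and its analogue for arbitrary $\gamma_k$). Lemma~\ref{lemma:A_n-anticomm}\ref{item:A_n-anticomm-pure} then shows that any two of these elements anticommute. The tool is Lemma~\ref{lemma:tilde-properties}(2): $\til{x}y=-\til{xy}$ for doubly pure $y$.

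\textbf{First component.} The old first component is $ac+\gamma_n\bar{d}b = ac-\gamma_n db$. For the new one, I use anticommutativity and then Lemma~\ref{lemma:tilde-properties}(2):
$$
a\til{c}=-\til{c}a=\til{ca}=-\til{ac}, \qquad \gamma_n\overline{\til{d}}\,b=-\gamma_n\til{d}b=\gamma_n\til{db}.
$$
Hence the new first component is $-\til{(ac-\gamma_n db)}$.

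\textbf{Second component.} The old second component is $da+b\bar{c}=da-bc$. For the new one,
$$
\til{d}a=-\til{da}, \qquad b\overline{\til{c}}=-b\til{c}=\til{c}b=-\til{cb}=\til{bc}.
$$
Hence the new second component is $-\til{(da-bc)}$.

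\textbf{Conclusion of the first part.} Combining the two components,
$$
(a,b)(\til{c},\til{d}) = -\bigl(\til{u},\til{v}\bigr), \qquad \text{where } (u,v)=(a,b)(c,d).
$$
By Lemma~\ref{lemma:tilde-properties}(1), $\til{\til{x}}=\gamma_{n-1}x$, so the tilde map is injective. Therefore $(a,b)(c,d)=0$ if and only if $(a,b)(\til{c},\til{d})=0$, which proves the first claim.

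\textbf{The ``consequently'' part.} Orthogonality is a symmetric relation. Apply the first part with the roles of the pairs exchanged, to the pair $(\til{c},\til{d})$ and the elements $\til{a},\til{b}$, whose tildes are $\gamma_{n-1}a,\gamma_{n-1}b$:
\begin{itemize}
\item $(\til{a},\til{b})\perp(\til{c},\til{d})$ holds if and only if $(\til{c},\til{d})\perp(\til{a},\til{b})$, by symmetry;
\item this holds if and only if $(\til{c},\til{d})\perp\gamma_{n-1}(a,b)$, by the first part;
\item this holds if and only if $(a,b)\perp(\til{c},\til{d})$, since $\gamma_{n-1}=\pm1$ and by symmetry;
\item this holds if and only if $(a,b)\perp(c,d)$, by the first part again.
\end{itemize}

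\textbf{Main obstacle.} The only delicate point is the sign bookkeeping. In particular, Lemma~\ref{lemma:tilde-properties}(2) requires the right-hand factor to be doubly pure, so each product must first be rewritten with the tilde element on the left, using anticommutativity, before the lemma is applied. The linear-independence hypothesis is exactly what guarantees the orthogonality, and hence the anticommutativity, needed for these rewritings.
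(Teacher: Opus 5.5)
Your proof is correct and follows the same route as the paper. You compute $(a,b)(\til{c},\til{d})$ componentwise, recognize it as $-\bigl(\til{u},\til{v}\bigr)$ with $(u,v)=(a,b)(c,d)$, and apply the first part twice for the consequence. The only difference is that the paper reads identities such as $a\til{c}=-\til{ac}$ and $\til{d}b=-\til{db}$ off the octonionic table of Lemma~\ref{lemma:octonionic-subalgebra}, whereas you derive them directly from anticommutativity and Lemma~\ref{lemma:tilde-properties}(2), which is how those table entries arise anyway and avoids checking that lemma's strong-alternativity hypothesis.
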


\begin{proof}
By Lemma~\ref{lemma:octonionic-subalgebra}, any two elements of $a,b,c,d$ together with $\til{e}_0$ generate an octonionic subalgebra in~$\A_n$. It remains to see that $(a,b)(c,d) = (ac + \gamma_n \bar{d}b, da + b\bar{c}) = (ac - \gamma_n db, da - bc)$ and $(a,b)(\til{c},\til{d}) = (a\til{c} + \gamma_n \bar{\til{d}}\,b, \til{d}a + b\bar{\til{c}}) = (a\til{c} - \gamma_n \til{d}b, \til{d}a - b\til{c}) = (-\til{ac} + \gamma_n \til{da}, -\til{da} + \til{bc}) = (-\til{(ac - \gamma_n db)}, -\til{(da - bc)})$. Clearly, $(a,b)(c,d) = 0$ if and only if $(a,b)(\til{c},\til{d}) = 0$.

For the second part of the statement, we apply the proposition twice. First we say that $(a,b)$ is orthogonal to $(c,d)$ if and only $(a,b)$ it is orthogonal to $(\til{c},\til{d})$, and then that $(\til{c},\til{d})$ is orthogonal to $(a,b)$ if and only if $(\til{c},\til{d})$ is orthogonal to $(\til{a},\til{b})$.
\end{proof}

We first consider elements of type~\eqref{item:component-type-1} from Remark~\ref{remark:component-types}.

\begin{lemma} \label{lemma:type-1-element}
Consider $(a, \pm a) \in \A_{n+1}$ with $a \in \E'_n$. Note that $\chi = \gamma_n \frac{n(a)}{n(\pm a)} = \gamma_n$. Then $(a, \pm a) \in Z'_e(\A_{n+1})$ if and only if $\chi = \gamma_n = 1$, and in this case it is orthogonal to itself and to all $(b, \mp b)$ such that $b \in \E'_n \setminus \{ a \}$.
\end{lemma}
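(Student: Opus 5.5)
Since $(a,\pm a)$ consists of basis elements, it lies in $DA(\A_{n+1})$ and satisfies condition~\eqref{equation:norm-condition} with $\chi=\gamma_n n(a)/n(\pm a)=\gamma_n$. The plan is to settle the zero-divisor question by a direct norm computation, and then to verify the orthogonality claims by multiplying out the relevant pairs.

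\emph{Step 1: the case $\gamma_n=-1$.} Since $a$ is pure, $n((a,\pm a))=n(a)-\gamma_n n(a)=2n(a)\neq 0$, because $n(a)=\pm1$. For a pure element $x$ we have $x\bar{x}=n(x)$, so $x\cdot(\bar{x}/n(x))=e_0$. I would combine this with the fact that the components $a$ and $\pm a$ are alternative. Then $x=(a,\pm a)$ generates an associative subalgebra together with $\bar x$, namely $\Lin(e_0,(a,0),(0,a),(0,e_0)\cdot\ldots)$. The cleanest way to exploit this is through Lemma~\ref{lemma:split-algebras-annihilators}. Suppose $(c,d)\in l.\Ann((a,b))$ with $b=\pm a$. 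Then $d=-(bc)a/n(a)$ and $b(ca)=\chi(bc)a$. Since $a$ alternates with everything, $[a,c,a]=0$ by flexibility, so $a(ca)=(ac)a$. The condition therefore becomes $(1-\chi)(ac)a=0$, that is, $2n(a)\,n(c)$-type nondegeneracy forces $c=0$ when $\chi=-1$. To see this, multiply by $a$ on the right and use alternativity: $((ac)a)a=(ac)(a^2)=-n(a)\,ac$, and then left-multiply by $\bar a$. Hence $c=0$, so $d=0$, and $(a,\pm a)$ is not a zero divisor. Thus being a zero divisor forces $\gamma_n=1$.

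\emph{Step 2: the case $\gamma_n=1$.} Here $\chi=1$ and $(a,\pm a)$ is pure, so Proposition~\ref{proposition:self-orthogonality} gives self-orthogonality. In particular $(a,\pm a)\in Z'_e(\A_{n+1})$, which establishes the equivalence.

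\emph{Step 3: orthogonality to $(b,\mp b)$.} Take $b\in\E'_n\setminus\{a\}$. Both elements are pure, so by Proposition~\ref{proposition:orthogonality-condition} it suffices to check a single product. With sign $s=\pm1$ and $\gamma_n=1$,
\[
(a,sa)(b,-sb)=\bigl(ab+\overline{(-sb)}(sa),\,-sba+sa\bar b\bigr)=\bigl(ab+ba,\,-s(ba+ab)\bigr),
\]
using $\bar b=-b$. Distinct pure basis elements are orthogonal with respect to $\langle\cdot,\cdot\rangle$, so they anticommute by Lemma~\ref{lemma:A_n-anticomm}. Hence the product is $0$.

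The only delicate point is Step~1, that is, ruling out zero-divisor status when $\gamma_n=-1$. If the annihilator route feels fragile, a more direct argument works. By Lemma~\ref{lemma:alternative-elements-are-normed}, $(c,d)(a,sa)=0$ gives $d\bar a=-s\,ac$ and $ca=-\gamma_n s\,\bar a d$. Taking norms yields $n(d)=n(c)$. Substituting $d=-s(ac)a/n(a)$ into the first coordinate and using flexibility gives $ca+\gamma_n ca=0$, so when $\gamma_n=-1$ one needs to recheck the sign. I expect this sign bookkeeping to be the main obstacle, and it should confirm that $\chi=-1$ forces $c=d=0$.
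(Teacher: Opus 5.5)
Your argument is correct and follows the paper's proof. Lemma~\ref{lemma:split-algebras-annihilators} plus flexibility reduces the annihilator condition to $(1-\chi)(ac)a=0$, injectivity of multiplication by $a$ forces $c=0$ when $\chi=-1$, Proposition~\ref{proposition:self-orthogonality} handles $\chi=1$, and anticommutation of distinct pure basis elements gives the edges to $(b,\mp b)$. The only difference is that you multiply out $(a,\pm a)(b,\mp b)$ directly, where the paper reads $d=\mp(ab)a/n(a)=\mp b$ off the orthogonalizer formula.

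Three small fixes:
\begin{itemize}
\item You only treated the left annihilator. Either repeat the identical computation for the right one, or invoke $Z(\A_n)=Z_{LR}(\A_n)$.
\item Your fallback in the last paragraph has a sign slip. The first coordinate actually equals $(1-\gamma_n)ca$, which forces $c=0$ for $\gamma_n=-1$, so there is no obstacle there and the fallback is unnecessary.
\item The unfinished sentence about the subalgebra generated by $x$ and $\bar x$ can simply be deleted.
\end{itemize}
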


\begin{proof}
By Lemma~\ref{lemma:split-algebras-orthogonalizers},
$$
O_{\A_{n+1}}((a, \pm a)) = \left\{ \left(b, \mp \dfrac{(ab)a}{n(a)} \right) \; \bigg| \; \Re(b) = 0, \: a(ba) = \chi (ab)a \right\}.
$$
The explicit form of annihilators is described by Lemma~\ref{lemma:split-algebras-annihilators}. By flexibility, we always have $a(ba) = (ab)a$, and $a(ba) \neq 0$ for $b \neq 0$. Therefore, for $\chi = -1$ we have $l.\Ann_{\A_{n+1}}((a, \pm a)) = r.\Ann_{\A_{n+1}}((a, \pm a)) = O_{\A_{n+1}}((a, \pm a)) = \{ 0 \}$, and thus $(a, \pm a) \notin Z'_e(\A_{n+1})$. Assume now that $\chi = 1$. By Proposition~\ref{proposition:self-orthogonality}, $(a, \pm a)$ is orthogonal to itself. Consider now $b \in \E'_n$, $b \neq a$. Then $b \perp a$, and thus $ab + ba = 0$. Therefore,
$$
\mp \dfrac{(ab)a}{n(a)} = \pm \dfrac{(ba)a}{n(a)} = \mp \dfrac{b(a\bar{a})}{n(a)} = \mp b,
$$
so $(a, \pm a)$ is orthogonal to $(b, \mp b)$.
\end{proof}

We proceed to doubly pure elements of type~\eqref{item:component-type-2}. Those elements of type~\eqref{item:component-type-2} which have $e_0$ as the second component are described in Lemma~\ref{lemma:e_0-special-element}.

\begin{lemma} \label{lemma:type-2-element}
Consider $(\til{a}, \pm a) \in \A_{n+1}$ with $a \in \E''_n = \E_n \setminus \{ e_0, \til{e}_0 \}$. Note that $\chi = \gamma_n \frac{n(\til{a})}{n(\pm a)} = \gamma_n \frac{n(\til{e}_0)n(a)}{n(a)} = - \gamma_{n-1} \gamma_n$.
\begin{enumerate}
    \item If $\chi = 1$, then $(\til{a}, \pm a)$ is orthogonal to itself, $(\til{e}_0, \pm e_0)$, and $(a, \pm \gamma_n \til{a})$.
    \item If $\chi = -1$, then $(\til{a}, \pm a)$ is connected in $\Gamma_e(\A_{n+1})$ to all $(\til{b}, \mp b)$ such that $b \in \E''_n \setminus \{ a, \til{a} \}$, and only to them. Besides, in this case $(\til{a}, \pm a) \in Z'_e(\A_{n+1})$ if and only if $n \geq 3$.
\end{enumerate}
\end{lemma}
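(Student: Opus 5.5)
The natural tool is the explicit orthogonalizer from Lemma~\ref{lemma:split-algebras-orthogonalizers}. The element $(\til{a},\pm a)$ is pure, lies in $DA(\A_{n+1})$ (basis elements are alternative by \cite[Lemma~4]{schafer}), and satisfies \eqref{equation:norm-condition} with $\chi=-\gamma_{n-1}\gamma_n$. Hence
$$
O_{\A_{n+1}}((\til{a},\pm a))=\left\{\left(c,\mp\frac{(ac)\til{a}}{n(\til{a})}\right)\;\bigg|\;\Re(c)=0,\ a(c\til{a})=\chi(ac)\til{a}\right\}.
$$
Neighbours in $\Gamma_e(\A_{n+1})$ are basis pairs, so we can restrict to $c=\pm e_k$ with $k\neq0$. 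For each such $c$ we must decide whether $a(c\til{a})=\pm(ac)\til{a}$ holds with the sign $\chi$, and then read off the second component.

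We compute $a(c\til{a})$ and $(ac)\til{a}$ case by case, using Lemma~\ref{lemma:tilde-properties} and Corollary~\ref{corollary:alternative-subalgebras}. The cases are determined by the position of $c$ relative to the octonionic subalgebra $\mathbb{O}$ spanned by $a$, $\til{e}_0$ and $c$.

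\textbf{Case (i): $c\in\{\til{e}_0,a,\til{a}\}$.} Here everything lies in the associative $\mathbb{H}_a$ of Lemma~\ref{lemma:quaternionic-subalgebra}. Thus $a(c\til{a})=(ac)\til{a}$, and the condition holds exactly when $\chi=1$.
\begin{itemize}
\item For $c=\til{a}$, Lemma~\ref{lemma:quaternionic-subalgebra} gives second component $\pm a$. This is self-orthogonality, which also follows from Proposition~\ref{proposition:self-orthogonality}.
\item For $c=\til{e}_0$, the second component comes out as $\pm e_0$, up to the sign conventions of the paper.
\item For $c=a$, it comes out as $\pm\gamma_n\til{a}$, via $\til{\til{a}}=\gamma_{n-1}a$ and $\chi=1$.
\end{itemize}
This yields part (1).

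\textbf{Case (ii): $c=b$ or $c=\til{b}$ with $b\in\E''_n\setminus\{a,\til{a}\}$.} Now $a,b,\til{e}_0$ generate $\mathbb{O}_{a,b}$ of Lemma~\ref{lemma:octonionic-subalgebra}, with table~\ref{table:octonionic-subalgebra}.
\begin{itemize}
\item For $c=b$ the table gives $a(b\til{a})=-(ab)\til{a}$. So the condition holds iff $\chi=-1$, and the resulting neighbour $(b,\dots)$ has second component a multiple of $\til{b}$.
\item Applying Proposition~\ref{proposition:shift-equivalence}, or simply rerunning the computation with $c=\til{b}$, gives the neighbour $(\til{b},\mp b)$ in the stated normalisation.
\end{itemize}
Case (i) shows that for $\chi=-1$ none of $\til{e}_0,a,\til{a}$ give neighbours. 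Together these give the ``only to them'' clause of part (2). The count of such $b$ is $2^n-4$, so neighbours exist iff $2^n>4$, i.e.\ $n\ge3$. For $n\le2$ the annihilator of $(\til{a},\pm a)$ is therefore trivial, so it is not a zero divisor. This gives the final equivalence.

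\textbf{Main obstacle.} The sign bookkeeping is the hard part: matching the second component $\mp(ac)\til{a}/n(\til{a})$ exactly to $\mp b$ and to $\pm\gamma_n\til{a}$ requires carefully tracking $n(\til{a})=-\gamma_{n-1}n(a)$ and the conjugation signs. For part (1), the claim that these three are the only neighbours is not asserted, so only existence is needed there. For part (2), exhaustiveness needs the observation that every $k\ne0$ falls into case (i) or case (ii).
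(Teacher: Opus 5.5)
Your treatment of part (1), of the neighbour computation and the ``only to them'' clause in part (2), and of the direction ``$n\ge 3\Rightarrow(\til{a},\pm a)\in Z'_e(\A_{n+1})$'' is correct and follows the paper's route. Like the paper, you use the orthogonalizer formula of Lemma~\ref{lemma:split-algebras-orthogonalizers}, split basis elements into $c\in\mathbb{H}_a$ versus $c\perp\mathbb{H}_a$, and read products off Lemmas~\ref{lemma:quaternionic-subalgebra} and~\ref{lemma:octonionic-subalgebra}. Your signs check out: $c=\til{e}_0$ gives $\pm e_0$, $c=a$ gives $\mp\gamma_{n-1}\til{a}=\pm\gamma_n\til{a}$, and $c=\til{b}$ gives $\mp b$. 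The paper gets $(a,\pm\gamma_n\til{a})$ from self-orthogonality via Proposition~\ref{proposition:shift-equivalence} rather than computing it, which makes no difference.

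The gap is the final step, ``for $n\le 2$ the annihilator of $(\til{a},\pm a)$ is therefore trivial''. You have only shown that no \emph{basis pair} with pure first component is orthogonal to $(\til{a},\pm a)$. That does not imply $(\til{a},\pm a)\notin Z(\A_{n+1})$, for two reasons. First, the left and right annihilators are linear subspaces and could a priori contain non-basis elements. Second, the orthogonalizer formula only parametrizes pure $c$, whereas being a zero divisor concerns the full left and right annihilators.

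The paper closes this with Lemma~\ref{lemma:split-algebras-annihilators}. For $n\le 2$ one has $\A_n=\mathbb{H}_a$, which is associative, so $a(c\til{a})=(ac)\til{a}$ and $(\til{a}c)a=\til{a}(ca)$ for \emph{every} $c\in\A_n$. With $\chi=-1$ the two annihilator conditions then force $(ac)\til{a}=0$ and $(\til{a}c)a=0$ respectively. Hence $c=0$, since $a$ and $\til{a}$ have nonzero norm, and both annihilators are $\{0\}$.

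Alternatively, you could rescue your counting argument by observing that $c\mapsto a(c\til{a})-\chi(ac)\til{a}$ is linear. On the basis $\E_n$ (including $e_0$) it acts as $e_k\mapsto(\epsilon_k-\chi)(ae_k)\til{a}$ with $\epsilon_k=\pm1$, and $k\mapsto(ae_k)\til{a}$ is a signed permutation. So its kernel is spanned by basis elements. But this has to be stated explicitly, and the same argument has to be repeated for the right-annihilator condition.
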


\begin{proof}
By Lemma~\ref{lemma:split-algebras-orthogonalizers},
$$
O_{\A_{n+1}}((\til{a}, \pm a)) = \left\{ \left(b, \mp \dfrac{(ab)\til{a}}{n(\til{a})} \right) \; \bigg| \; \Re(b) = 0, \: a(b\til{a}) = \chi (ab)\til{a} \right\}.
$$
Since $a \in \pm \E''_n$, we may apply Lemma~\ref{lemma:quaternionic-subalgebra} to it. Thus for $b \in \mathbb{H}_a = \Lin(1,a,\til{e}_0,\til{a})$ we have $a(b\til{a}) = (ab)\til{a}$. On the other hand, if $b \perp \mathbb{H}_a$, then we may apply Lemma~\ref{lemma:octonionic-subalgebra} to $a, b$ and obtain that $a(b\til{a}) = -(ab)\til{a}$. It remains to note that for any $b \in \E'_n$ we have either $b \in \mathbb{H}_a$ or $b \perp \mathbb{H}_a$.
\begin{enumerate}
    \item Assume that $\chi = 1$. By Proposition~\ref{proposition:self-orthogonality}, $(\til{a}, \pm a)$ is orthogonal to itself. Then Proposition~\ref{proposition:shift-equivalence} implies that $(\til{a}, \pm a)$ is also orthogonal to $(\pm a, \gamma_n \til{a})$. Since $b \in \mathbb{H}_a$ and $b \in \pm \E'_n$, it remains to consider $b = \til{e}_0$. Clearly,
    $$
    \mp \dfrac{(a\til{e}_0)\til{a}}{n(\til{a})} = \mp \dfrac{\til{a}\til{a}}{n(\til{a})} = \pm e_0.
    $$
    Hence $(\til{a}, \pm a)$ is orthogonal to $(\til{e}_0, \pm e_0)$.
    \item If $\chi = -1$, then for $b \in \E'_n$ we have $b \perp \mathbb{H}_a$, so we may replace $b$ with $\til{b}$ for some $b \in \E''_n$ and $b \neq a$, $b \neq \til{a}$. We use Lemma~\ref{lemma:octonionic-subalgebra} to compute
    $$
    \mp \dfrac{(a\til{b})\til{a}}{n(\til{a})} = \pm \dfrac{(\til{ab})\til{a}}{n(\til{a})} = \mp \dfrac{n(a)n(\til{e}_0)b}{n(a)n(\til{e}_0)} = \mp b.
    $$
    Hence $(\til{a}, \pm a)$ is orthogonal to $(\til{b}, \mp b)$.
    
    Note that if $n \leq 2$, then $\A_n = \mathbb{H}_a$, so $a(b\til{a}) = (ab)\til{a}$ for all $b \in \A_n$, and thus it follows from Lemma~\ref{lemma:split-algebras-annihilators} that  $l.\Ann_{\A_{n+1}}((\til{a}, \pm a)) = r.\Ann_{\A_{n+1}}((\til{a}, \pm a)) = O_{\A_{n+1}}((\til{a}, \pm a)) = \{ 0 \}$. However, if $n \geq 3$, then $O_{\A_{n+1}}((\til{a}, \pm a)) \neq \{ 0 \}$ and $(\til{a}, \pm a) \in Z'_e(\A_{n+1})$. \qedhere
\end{enumerate}
\end{proof}

Let us now explore elements of types~\eqref{item:component-type-3} and~\eqref{item:component-type-4} from Remark~\ref{remark:component-types}. We begin with those of them which are not four times pure, see Definition~\ref{definition:real-imaginary-part}. That is, one of their components is equal either to $\pm e_0$ or to $\pm \til{e}_0$. Clearly, $(e_0, c) \notin Z'_e(\A_{n+1})$ for any $c \in \pm \E_n$, so we will consider only elements of the form $(c, e_0)$.

\begin{lemma} \label{lemma:e_0-special-element}
Consider $(c, e_0) \in \A_{n+1}$, $c \in \pm \E'_n$. Then $(c, e_0) \in Z'_e(\A_{n+1})$ if and only if its $\chi = 1$, and in this case $(c, e_0)$ is orthogonal to $(a,b) \in Z'_e(\A_{n+1})$ if and only if $ab = n(b)c$.
\end{lemma}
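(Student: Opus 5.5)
The plan is to apply Lemma~\ref{lemma:split-algebras-orthogonalizers} to $(c,e_0)$, in the same way as in Lemmas~\ref{lemma:type-1-element} and~\ref{lemma:type-2-element}. Since $c\in\pm\E'_n$ and $e_0$ are basis elements up to sign, they are alternative, so $(c,e_0)\in DA(\A_{n+1})$. The pair is pure, and it satisfies condition~\eqref{equation:norm-condition} with $\chi=\gamma_n\frac{n(c)}{n(e_0)}=\gamma_n n(c)$. Taking $a=c$, $b=e_0$ in Lemma~\ref{lemma:split-algebras-orthogonalizers}, the side condition $b(ca)=\chi(bc)a$ becomes $x c=\chi\, x c$ for the first component $x$ of a candidate neighbour. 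The second component is $-\frac{xc}{n(c)}$. We therefore get
$$
O_{\A_{n+1}}((c,e_0))=\left\{\left(x,-\tfrac{xc}{n(c)}\right)\;\Big|\;\Re(x)=0,\ xc=\chi\,xc\right\}.
$$

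\textbf{Case $\chi=-1$.} The condition forces $xc=0$. By Lemma~\ref{lemma:alternative-elements-are-normed}, $n(xc)=n(x)n(c)$, but this alone does not give $x=0$, since $n$ may be indefinite. Instead we argue through annihilators, as in Lemma~\ref{lemma:type-1-element}. By Lemma~\ref{lemma:split-algebras-annihilators}, an element $(x,-xc/n(c))$ of the left annihilator with $xc=0$ has second component $0$. The first component then satisfies $x c=0$, where $c$ is an invertible basis element with $c^{-1}=\bar c/n(c)$. Because $c$ is alternative, $(xc)\bar c=x(c\bar c)=n(c)x$, so $xc=0$ gives $x=0$. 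The right annihilator is handled in the same way, so both annihilators are trivial and $(c,e_0)\notin Z'_e(\A_{n+1})$.

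\textbf{Case $\chi=1$.} The side condition is vacuous, so every pure $x$ yields a neighbour $(x,-xc/n(c))$; for example $x=c$ gives $(c,e_0)$ itself up to sign. Hence $(c,e_0)\in Z'_e(\A_{n+1})$.

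For the orthogonality criterion, take $(a,b)\in Z'_e(\A_{n+1})$. By the description above, it is orthogonal to $(c,e_0)$ if and only if $\Re(a)=0$ and $b=-\frac{ac}{n(c)}$. The first condition is automatic because $a\in\E'_n$. For the second, I would convert $b=-ac/n(c)$ into $ab=n(b)c$ by left multiplication by $\bar a$ or $a$, using alternativity of the basis element $a$ together with $\bar a=-a$. If $b=-ac/n(c)$, then $ab=-\frac{a(ac)}{n(c)}=-\frac{a^2c}{n(c)}=\frac{n(a)}{n(c)}c$. Lemma~\ref{lemma:alternative-elements-are-normed} gives $n(b)=n(a)n(c)/n(c)^2=n(a)/n(c)$, so $ab=n(b)c$. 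Conversely, if $ab=n(b)c$, then $a(ab)=-n(a)b$, hence $b=-\frac{n(b)}{n(a)}ac$. Taking norms, $n(a)n(b)=n(ab)=n(b)^2n(c)$ forces $n(a)/n(b)=n(b)n(c)$. Since all norms are $\pm1$, this gives $\frac{n(b)}{n(a)}=\frac{1}{n(c)}$, and the two conditions agree.

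The main obstacle I expect is the exclusion step for $\chi=-1$. Because the norm is indefinite, I must make sure that the triviality of both annihilators (not only the orthogonalizer) follows, so I would rely on invertibility of $c$ via alternativity rather than on norms. The remaining work is sign bookkeeping with $\pm$ conventions.
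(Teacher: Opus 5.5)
Your proposal is correct and follows the paper's proof essentially step for step. You use Lemmas~\ref{lemma:split-algebras-annihilators}--\ref{lemma:split-algebras-orthogonalizers} applied to $(c,e_0)$, show both annihilators are trivial when $\chi=-1$, and compute $ab=n(a)c/n(c)=n(b)c$ when $\chi=1$; you also justify two points the paper only asserts, namely $xc=0\Rightarrow x=0$ via $(xc)\bar c=n(c)x$ and the converse direction. One small slip: $n(a)n(b)=n(b)^2n(c)$ gives $n(a)=n(b)n(c)$, i.e. $n(a)/n(b)=n(c)$ rather than $n(b)n(c)$, and it is this corrected form that yields your (correct) conclusion $n(b)/n(a)=1/n(c)$.
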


\begin{proof}
By Lemma~\ref{lemma:split-algebras-orthogonalizers},
$$
O_{\A_{n+1}}((c, e_0)) = \left\{ \left(a, -\dfrac{(e_0a)c}{n(c)} \right) \; \bigg| \; \Re(a) = 0, \: e_0(ac) = \chi (e_0a)c \right\}.
$$
The explicit form of annihilators is described in Lemma~\ref{lemma:split-algebras-annihilators}. Clearly, we always have $e_0(ac) = ac = (e_0a)c$, and $ac \neq 0$ for $a \neq 0$. Therefore, for $\chi = -1$ we have $l.\Ann_{\A_{n+1}}((c, e_0)) = r.\Ann_{\A_{n+1}}((c, e_0)) = O_{\A_{n+1}}((c, e_0)) = \{ 0 \}$, and thus $(c, e_0) \notin Z'_e(\A_{n+1})$. Assume now that $\chi = 1$. Let $a \in \E'_n$. We denote $b = -\dfrac{ac}{n(c)} \in \pm \E_n$. Since $a$ is alternative, by Lemma~\ref{lemma:alternative-elements-are-normed},
$$
n(b) = \frac{n(ac)}{(n(c))^2} = \frac{n(a)n(c)}{(n(c))^2} = \frac{n(a)}{n(c)},
$$
and thus
$$
ab = -\dfrac{a(ac)}{n(c)} = \dfrac{(a\bar{a})c}{n(c)} = \dfrac{n(a)}{n(c)}c = n(b)c.
$$
Conversely, if $ab = n(b)c$, then $b = -\dfrac{ac}{n(c)} \in \pm \E_n$.
\end{proof}

We now investigate the elements with one of the components being equal to $\til{e}_0$.

\begin{lemma} \label{lemma:til-e_0-special-element}
Consider $(c, \til{e}_0), (\til{e}_0, \gamma_n c) \in \A_{n+1}$, $c \in \pm \E''_n$. Clearly, their $\chi$ are equal.
\begin{enumerate}
    \item If $\chi = 1$, then $(c, \til{e}_0)$ and $(\til{e}_0, \gamma_n c)$ are orthogonal to themselves, and all edges in $\Gamma_e(\A_{n+1})$ containing them can be depicted as
    $$
    (\til{c}, -\gamma_{n-1} e_0) \longleftrightarrow (c, \til{e}_0) \longleftrightarrow (\til{e}_0, \gamma_n c) \longleftrightarrow (\til{c}, \gamma_{n-1} e_0).
    $$
    \item Let now $\chi = -1$. Then $(c, \til{e}_0)$ and $(\til{e}_0, \gamma_n c)$ are connected in $\Gamma_e(\A_{n+1})$ to all $(a,b) \in Z'_e(\A_{n+1})$ such that $a \perp \mathbb{H}_c = \Lin(e_0, c, \til{e}_0, \til{c})$ and $ab = \gamma_{n-1}n(b) \til{c}$, and only to them. Besides, in this case $(c, \til{e}_0), (\til{e}_0, \gamma_n c) \in Z'_e(\A_{n+1})$ if and only if $n \geq 3$.
\end{enumerate}
\end{lemma}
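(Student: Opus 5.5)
We follow the pattern of Lemmas~\ref{lemma:type-1-element} and~\ref{lemma:type-2-element}. Take $(x,y) = (c,\til{e}_0)$ and write its orthogonalizer via Lemma~\ref{lemma:split-algebras-orthogonalizers}:
$$
O_{\A_{n+1}}((c,\til{e}_0)) = \left\{ \left(b, -\dfrac{(\til{e}_0 b)c}{n(c)}\right) \; \bigg| \; \Re(b)=0,\ \til{e}_0(bc) = \chi(\til{e}_0 b)c \right\},
$$
with $\chi = \gamma_n n(c)/n(\til{e}_0) = -\gamma_{n-1}\gamma_n n(c)$. The same value arises for $(\til{e}_0,\gamma_n c)$. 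By Proposition~\ref{proposition:shift-equivalence}, $(c,\til{e}_0)$ and $(\til{e}_0,\gamma_n c)$ have the same neighbours among pure elements, so it suffices to treat $(c,\til{e}_0)$. The two things to settle are its neighbours and its self-orthogonality.

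For a basis element $b \in \E'_n$ there are two cases: either $b \in \mathbb{H}_c$ or $b \perp \mathbb{H}_c$. If $b \in \mathbb{H}_c$, Lemma~\ref{lemma:quaternionic-subalgebra} gives associativity, so $\til{e}_0(bc) = (\til{e}_0 b)c$. If $b \perp \mathbb{H}_c$, we apply Lemma~\ref{lemma:octonionic-subalgebra} to the pair $c, b$, and the octonion table (Table~\ref{table:octonionic-subalgebra}) gives $\til{e}_0(bc) = -(\til{e}_0 b)c$. This is the same mechanism as in Lemma~\ref{lemma:type-2-element}.

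When $\chi = 1$, Proposition~\ref{proposition:self-orthogonality} gives self-orthogonality for both elements. The admissible $b$ are then the pure elements of $\mathbb{H}_c$, namely $b \in \{c, \til{e}_0, \til{c}\}$. We compute the second component $-(\til{e}_0 b)c/n(c)$ from Table~\ref{table:quaternionic-subalgebra}:
\begin{itemize}
\item $b = c$ gives $(c,\til{e}_0)$ itself;
\item $b = \til{e}_0$ gives $(\til{e}_0, \gamma_n c)$, after the sign is checked with $\chi = 1$;
\item $b = \til{c}$ gives $(\til{c}, \pm\gamma_{n-1} e_0)$.
\end{itemize}
Repeating this for $(\til{e}_0,\gamma_n c)$, whose orthogonalizer we write in the same way, yields the other end of the displayed path and confirms the signs.

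When $\chi = -1$, only $b \perp \mathbb{H}_c$ qualifies. The partner is then $d = -(\til{e}_0 b)c/n(c) = (\til{b}c)/n(c)$, and by Lemma~\ref{lemma:tilde-properties}(2) this equals $-\til{bc}/n(c)$. Using the octonion table, we compute $bd$ and show $bd = \gamma_{n-1} n(d)\til{c}$; the norms $n(d)$ are evaluated with Lemma~\ref{lemma:alternative-elements-are-normed}. For the converse, suppose $(a,b')$ satisfies $a \perp \mathbb{H}_c$ and $ab' = \gamma_{n-1}n(b')\til{c}$. Multiplying on the left by $\bar a$ (alternativity of $a$) recovers $b'$ uniquely, and it matches the formula from Lemma~\ref{lemma:split-algebras-orthogonalizers}.

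For the statement that these elements lie in $Z'_e(\A_{n+1})$ exactly when $n \geq 3$: if $n \leq 2$ then $\A_n = \mathbb{H}_c$, so no admissible $b$ exists and both annihilators vanish by Lemma~\ref{lemma:split-algebras-annihilators}. If $n \geq 3$, some basis element is orthogonal to $\mathbb{H}_c$, and it supplies a nonzero neighbour.

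The main obstacle is sign bookkeeping. We must check that the $\chi = 1$ neighbours come out exactly as $(\til{c}, \mp\gamma_{n-1} e_0)$ attached to the correct ends of the path. We must also check that the $\chi = -1$ condition reduces to exactly $ab = \gamma_{n-1}n(b)\til{c}$. We will do both by careful use of Tables~\ref{table:quaternionic-subalgebra} and~\ref{table:octonionic-subalgebra} with $\mu_3 = -\gamma_{n-1}$.
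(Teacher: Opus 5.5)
Your proposal is correct and follows the paper's proof essentially step for step: the orthogonalizer from Lemma~\ref{lemma:split-algebras-orthogonalizers}, the dichotomy $b \in \mathbb{H}_c$ (associativity by Lemma~\ref{lemma:quaternionic-subalgebra}) versus $b \perp \mathbb{H}_c$ (sign flip by Lemma~\ref{lemma:octonionic-subalgebra}), the three pure basis elements of $\mathbb{H}_c$ when $\chi = 1$, the computation of $bd$ via $\til{b}c = -\til{bc}$ when $\chi = -1$, and $\A_n = \mathbb{H}_c$ for $n \leq 2$.

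One small precision concerns Proposition~\ref{proposition:shift-equivalence}, which needs all four components to be pure. So $(c,\til{e}_0)$ and $(\til{e}_0,\gamma_n c)$ share only their doubly pure neighbours; the neighbours $(\til{c}, \mp\gamma_{n-1}e_0)$ are not shared, which is why you rightly redo the $\chi = 1$ case for the second element. In the $\chi = -1$ case you should likewise note, as the paper does, that every neighbour of either element is doubly pure before reducing to $(c,\til{e}_0)$.
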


\begin{proof}
By Lemma~\ref{lemma:split-algebras-orthogonalizers},
$$
O_{\A_{n+1}}((c, \til{e}_0)) = \left\{ \left(a, -\dfrac{(\til{e}_0a)c}{n(c)} \right) \; \bigg| \; \Re(a) = 0, \: \til{e}_0(ac) = \chi (\til{e}_0a)c \right\}.
$$
Since $c \in \pm \E''_n$, we may apply Lemma~\ref{lemma:quaternionic-subalgebra} to it. Thus for $a \in \mathbb{H}_c$ we have $\til{e}_0(ac) = (\til{e}_0a)c$. On the other hand, if $a \perp \mathbb{H}_c$, then we may apply Lemma~\ref{lemma:octonionic-subalgebra} to $a, c$ and obtain that $\til{e}_0(ac) = -(\til{e}_0a)c$. It remains to note that for any $a \in \E'_n$ we have either $a \in \mathbb{H}_c$ or $a \perp \mathbb{H}_c$.
\begin{enumerate}
    \item Assume that $\chi = 1$. By Proposition~\ref{proposition:self-orthogonality}, $(c, \til{e}_0)$ and $(\til{e}_0, \gamma_n c)$ are orthogonal to themselves. Hence it follows from Proposition~\ref{proposition:shift-equivalence} that $(c, \til{e}_0)$ is also orthogonal to $(\til{e}_0, \gamma_n c)$. Since $a \in \mathbb{H}_c$ and $a \in \pm \E'_n$, it remains to consider $a = \til{c}$. It follows from Lemma~\ref{lemma:quaternionic-subalgebra} that
    $$
    -\dfrac{(\til{e}_0\til{c})c}{n(c)} = -\dfrac{-\gamma_{n-1} cc}{n(c)} = -\dfrac{\gamma_{n-1} c\bar{c}}{n(c)} = -\gamma_{n-1} e_0.
    $$
    We obtain that $(c, \til{e}_0)$ is orthogonal to $(\til{c}, -\gamma_{n-1} e_0)$. Similarly, $(\til{e}_0, \gamma_n c)$ is orthogonal to $(\til{c}, \gamma_{n-1} e_0)$.
    \item If $\chi = -1$, then it is clear from Lemma~\ref{lemma:split-algebras-orthogonalizers} that any neighbour $(a, b)$ either of $(c, \til{e}_0)$ or of $(\til{e}_0, \gamma_n c)$ in $\Gamma_e(\A_{n+1})$ is doubly pure. It follows from Proposition~\ref{proposition:shift-equivalence} that $(a,b)$ is orthogonal to $(c, \til{e}_0)$ if and only if it is orthogonal to $(\til{e}_0, \gamma_n c)$. Consider now some $a \in \E'_n$ such that $a \perp \mathbb{H}_c$, and let $b = -\dfrac{(\til{e}_0a)c}{n(c)} \in \E_n$. By Lemma~\ref{lemma:alternative-elements-are-normed},
    $$
    n(b) = \frac{n((\til{e}_0a)c)}{(n(c))^2} = \frac{n(\til{e}_0a)n(c)}{(n(c))^2} = \frac{n(\til{e}_0)n(a)}{n(c)} = -\gamma_{n-1} \frac{n(a)}{n(c)},
    $$
    We use Lemma~\ref{lemma:octonionic-subalgebra} to compute
    $$
    ab = a \cdot \left(-\dfrac{(\til{e}_0a)c}{n(c)}\right) = a\dfrac{\til{a}c}{n(c)} = -a\dfrac{\til{ac}}{n(c)} = -\dfrac{n(a)\til{c}}{n(c)} = \gamma_{n-1}n(b) \til{c}.
    $$
    Conversely, if $ab = \gamma_{n-1}n(b) \til{c}$, then $b = -\dfrac{(\til{e}_0a)c}{n(c)} \in \pm \E_n$, so $(a,b)$ is orthogonal to $(c, \til{e}_0)$.
    
    Note that if $n \leq 2$, then $\A_n = \mathbb{H}_c$, so $\til{e}_0(ac) = (\til{e}_0a)c$ for all $a \in \A_n$, and thus it follows from Lemma~\ref{lemma:split-algebras-annihilators} that  $l.\Ann_{\A_{n+1}}((c, \til{e}_0)) = r.\Ann_{\A_{n+1}}((c, \til{e}_0)) = l.\Ann_{\A_{n+1}}((\til{e}_0, \gamma_n c)) = r.\Ann_{\A_{n+1}}((\til{e}_0, \gamma_n c)) = \{ 0 \}$. However, if $n \geq 3$, then
    \\
    $O_{\A_{n+1}}((c, \til{e}_0)) = O_{\A_{n+1}}((\til{e}_0, \gamma_n c)) \neq \{ 0 \}$ and $(c, \til{e}_0), (\til{e}_0, \gamma_n c) \in Z'_e(\A_{n+1})$. \qedhere
\end{enumerate}
\end{proof}

Further we study orthogonality relation of four times pure elements.

\begin{notation}
We denote
\begin{align*}
    \A_{n}^{\circ} &= \A_{n} \{\gamma_0, \dots, \gamma_{n-2}, \gamma_n \},\\
    \A_{n}^{\bullet} &= \A_{n} \{\gamma_0, \dots, \gamma_{n-2}, \gamma_{n-1} \gamma_n \}.
\end{align*}
\end{notation}

\begin{lemma} \label{lemma:distinct-nonspecial-element}
Let $a,b,c,d$ be linearly independent elements in $\pm \E'_{n-1}$.
\begin{enumerate}
    \item Let $x \in \{ (a, b), (b, \gamma_n a), (\til{a}, \til{b}), (\til{b}, \gamma_n \til{a}) \}$, $y \in \{ (c, d), (c, \gamma_n d), (\til{c}, \til{d}), (\til{c}, \gamma_n \til{d}) \}$. Then $x$ and $y$ are orthogonal in $\A_{n+1}$ if and only if $(a,b)$ and $(c,d)$ are orthogonal in~$\A_{n}^{\circ}$.
    \item Let $x \in \{ (a, \til{b}), (\til{b}, \gamma_n a), (\til{a}, \gamma_{n-1} b), (b, \gamma_{n-1} \gamma_n \til{a}) \}$, $y \in \{ (c, \til{d}), (\til{d}, \gamma_n c),
    \\
    (\til{c}, \gamma_{n-1} d), (d, \gamma_{n-1} \gamma_n \til{c}) \}$. Then $x$ and $y$ are orthogonal in $\A_{n+1}$ if and only if $(a,b)$ and $(c,d)$ are orthogonal in~$\A_{n}^{\bullet}$.
\end{enumerate}
\end{lemma}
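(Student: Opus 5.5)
The plan is to reduce every pair $(x,y)$ to one representative pair, and then compare a single product in $\A_{n+1}$ with the corresponding product in $\A_n^{\circ}$ or $\A_n^{\bullet}$.

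\textbf{Reduction.} Since $a,b,c,d$ are linearly independent elements of $\pm\E'_{n-1}$, the eight elements $a,\til a,b,\til b,c,\til c,d,\til d$ are linearly independent elements of $\pm\E''_n$. Two facts hold here:
\begin{itemize}
\item The elements of $\pm\E'_{n-1}$ lie in $\A_{n-1}$, the first half of $\A_n$.
\item By the corollary on $\til{e_j}$, their tildes are $\pm$ basis elements in the second half of $\A_n$.
\end{itemize}
So Propositions~\ref{proposition:shift-equivalence} and~\ref{proposition:tilde-equivalence} apply. By Proposition~\ref{proposition:shift-equivalence}, $x$ is orthogonal to $(c,d)$ if and only if it is orthogonal to $(d,\gamma_n c)$. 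By Proposition~\ref{proposition:tilde-equivalence}, tildeing both components of one factor, or of both factors, does not change orthogonality. Applying shift and tilde to either factor, and using the symmetry of orthogonality, moves between the members of each four-element list. I read the second entry of the $y$-list in part~(1) as the shifted element $(d,\gamma_n c)$, by analogy with the $x$-list. In part~(2), Lemma~\ref{lemma:tilde-properties}(1) gives $\til{\til b}=\gamma_{n-1}b$, so $(\til a,\gamma_{n-1}b)$ is the tilde of $(a,\til b)$. Likewise $(b,\gamma_{n-1}\gamma_n\til a)$ is the shift of $(\til a,\gamma_{n-1}b)$, and $(\til b,\gamma_n a)$ is the shift of $(a,\til b)$. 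Hence it suffices to treat $x=(a,b)$, $y=(c,d)$ in part~(1), and $x=(a,\til b)$, $y=(c,\til d)$ in part~(2).

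\textbf{Part (1).} All four components lie in $\A_{n-1}$, so every product $ac$, $db$, $da$, $bc$ is computed in $\A_{n-1}$ and uses only $\gamma_0,\dots,\gamma_{n-2}$. The product in $\A_{n+1}$ is $(ac+\gamma_n\bar d b,\ da+b\bar c)$. In $\A_n^{\circ}=\A_{n-1}\{\gamma_n\}$ the product is literally the same pair of expressions. So one vanishes if and only if the other does.

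\textbf{Part (2).} Here $(a,\til b)(c,\til d)=(ac+\gamma_n\overline{\til d}\,\til b,\ \til d a+\til b\bar c)$. Basis elements alternate strongly pairwise, so Corollary~\ref{corollary:alternative-subalgebras} applies. The elements $a,b,c,d$ are pure, pairwise anticommuting elements of $\A_{n-1}$, and $\til d$ is pure. The corollary then gives three identities:
\begin{itemize}
\item $\overline{\til d}\,\til b=-\til d\til b=-\gamma_{n-1}\bar b d=\gamma_{n-1}bd=-\gamma_{n-1}db$;
\item $\til d a=\til{d\bar a}=-\til{da}$;
\item $\til b\bar c=-\til{b\bar c}=\til{bc}$.
\end{itemize}
Hence the product equals $\bigl(ac-\gamma_{n-1}\gamma_n db,\ -\til{(da-bc)}\bigr)$. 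In $\A_n^{\bullet}=\A_{n-1}\{\gamma_{n-1}\gamma_n\}$ we have $(a,b)(c,d)=(ac-\gamma_{n-1}\gamma_n db,\ da-bc)$. Since the map $z\mapsto\til z$ is injective, the two products vanish simultaneously.

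\textbf{Main obstacle.} The main difficulty is the sign bookkeeping in part~(2), namely which of $\bar y$, $y$ appears, and the extra factor $\gamma_{n-1}$. I would double-check each identity against Table~\ref{table:octonionic-subalgebra}. I would also verify that the reduction step never needs two tildes on the same slot without accounting for the factor $\gamma_{n-1}$ from $\til{\til z}=\gamma_{n-1}z$.
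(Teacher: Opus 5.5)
Your proposal is correct and follows essentially the same route as the paper: you reduce via Propositions~\ref{proposition:shift-equivalence} and~\ref{proposition:tilde-equivalence} (using $\til{\til b}=\gamma_{n-1}b$) to the representatives $(a,b),(c,d)$ and $(a,\til b),(c,\til d)$, read part~(1) off as literally the $\A_n^{\circ}$ product, and in part~(2) rewrite the product as $(ac+\gamma_{n-1}\gamma_n\bar d b,\,-\til{(da+b\bar c)})$; the only difference is that the paper cites Lemma~\ref{lemma:octonionic-subalgebra} directly for these identities, whereas you use its consequence, Corollary~\ref{corollary:alternative-subalgebras}. Your correction of the $y$-list is right, but the fourth entry $(\til c,\gamma_n\til d)$ must likewise be read as $(\til d,\gamma_n\til c)$: for $\gamma_n=-1$, Lemma~\ref{lemma:next-mult-pair} shows that $x$ cannot be orthogonal to both $(\til c,\til d)$ and $(\til c,-\til d)$.
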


\begin{proof}
\leavevmode
\begin{enumerate}
    \item Clearly, $(a,b)$ and $(c,d)$ are orthogonal in $\A_{n+1}$ if and only if $(ac + \gamma_n \bar{d}b, da + b\bar{c}) = 0$. This condition holds exactly when $(a,b)$ and $(c,d)$ are orthogonal in $\A_{n}^{\circ}$. The equivalence of other orthogonalities follows immediately from Propositions~\ref{proposition:shift-equivalence} and~\ref{proposition:tilde-equivalence}.
    \item By Lemma~\ref{lemma:tilde-properties}, $\til{\til{b}} = \gamma_{n-1} b$ and $\gamma_{n-1}^2 = 1$, so $(\til{a},\til{\til{b}}) = (\til{a}, \gamma_{n-1} b)$ and $(\til{\til{b}}, \til{\gamma_n a}) = (\gamma_{n-1} b, \gamma_n \til{a}) = \gamma_{n-1} (b, \gamma_{n-1} \gamma_n \til{a})$. Similarly, $(\til{c},\til{\til{d}}) = (\til{c}, \gamma_{n-1} d)$ and $(\til{\til{d}}, \til{\gamma_n c}) = \gamma_{n-1} (d, \gamma_{n-1} \gamma_n \til{c})$. Then, by Propositions~\ref{proposition:shift-equivalence} and~\ref{proposition:tilde-equivalence}, $x \in \{ (a, \til{b}), (\til{b}, \gamma_n a), (\til{a}, \gamma_{n-1} b),\\ (b, \gamma_{n-1} \gamma_n \til{a}) \}$ and $y \in \{ (c, \til{d}), (\til{d}, \gamma_n c), (\til{c}, \gamma_{n-1} d),
    \\
    (d, \gamma_{n-1} \gamma_n \til{c}) \}$ are orthogonal in $\A_{n+1}$ if and only if $(a,\til{b})$ and $(c,\til{d})$ are orthogonal in $\A_{n+1}$.
    
    By Lemma~\ref{lemma:octonionic-subalgebra}, any two elements of $a,b,c,d$ together with $\til{e}_0$ generate an octonionic subalgebra in~$\A_n$. Hence $(a,\til{b})$ and $(c,\til{d})$ are orthogonal in $\A_{n+1}$ if and only if 
    $$
    0 = (ac + \gamma_n \bar{\til{d}}\,\til{b}, \til{d}a + \til{b}\bar{c}) = (ac + \gamma_{n-1} \gamma_n \bar{d}b, -\til{(da + b\bar{c})}).
    $$
    Since in $\A^{\bullet}_n$ we have $(a,b)(c,d) = (ac + \gamma_{n-1} \gamma_n \bar{d}b, da + b\bar{c})$, this condition holds exactly when $(a,b)$ and $(c,d)$ are orthogonal in $\A_{n}^{\bullet}$. \qedhere
\end{enumerate}
\end{proof}

We now consider the case when a four times pure element $(a,b)$ is orthogonal to some element such that one of its components equals either $\pm a$ or $\pm \til{a}$ (hence the other component is either $\pm b$ or $\pm \til{b}$).

\begin{lemma} \label{lemma:common-nonspecial-element}
Let $a,b,\til{a},\til{b}$ be linearly independent elements in $\pm \E''_n$.
\begin{enumerate}
    \item If $\chi = 1$, then $(a,b)$ is orthogonal to itself and $(b, \gamma_n a)$.
    \item If $\chi = -1$, then $(a,b)$ is orthogonal to $(\til{a}, \til{b})$ and $(\til{b}, \gamma_n \til{a})$.
\end{enumerate}
\end{lemma}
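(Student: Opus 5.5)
Part (1) follows directly from earlier results. Here $\chi = \gamma_n \frac{n(a)}{n(b)}$. Since $a,b \in \pm\E''_n$ are pure, $(a,b)$ is pure. With $\chi=1$, Proposition~\ref{proposition:self-orthogonality} shows that $(a,b)$ is orthogonal to itself. Applying Proposition~\ref{proposition:shift-equivalence} with $(c,d)=(a,b)$, where all four components are pure, then shows that $(a,b)$ is also orthogonal to $(b,\gamma_n a)$.

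For part (2) I would first show that $(a,b)(\til{a},\til{b})=0$, and then obtain orthogonality to $(\til{b},\gamma_n\til{a})$ from Proposition~\ref{proposition:shift-equivalence}. That proposition applies because $\til{a},\til{b}$ are pure: they are $\pm$ basis elements different from $e_0$, being linearly independent from $a,b$ and lying in $\pm\E''_n$ by hypothesis. By Proposition~\ref{proposition:orthogonality-condition}, both vertices are pure, so it suffices to check a single product. We have
$$(a,b)(\til{a},\til{b}) = \big(a\til{a} + \gamma_n \overline{\til{b}}\,b,\ \til{b}a + b\overline{\til{a}}\big) = \big(a\til{a} - \gamma_n \til{b}b,\ \til{b}a - b\til{a}\big).$$

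To evaluate these products I would use the subalgebras from Section~\ref{section:alternative-subalgebras}. Linear independence of $a,b,\til{a},\til{b}$ in $\pm\E''_n$ means $b\perp\Lin(e_0,a,\til{e}_0,\til{a})$. Distinct basis elements anticommute, and since they are alternative they alternate strongly. Hence Lemma~\ref{lemma:octonionic-subalgebra} and Table~\ref{table:octonionic-subalgebra} apply, with $\mu_1=n(a)$, $\mu_2=n(b)$, $\mu_3=-\gamma_{n-1}$. The table gives $a\til{a} = -\mu_1\til{e}_0$, $\til{b}b = \mu_2\til{e}_0$, $\til{b}a=\til{ab}$ and $b\til{a}=\til{ab}$. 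The second component therefore vanishes. The first component equals $-(\mu_1+\gamma_n\mu_2)\til{e}_0 = -(n(a)+\gamma_n n(b))\til{e}_0$. Since $\chi=\gamma_n n(a)/n(b)=-1$ and the norms are $\pm1$, we get $n(a) = -\gamma_n n(b)$, so this also vanishes.

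The main obstacle is keeping the signs straight. Two points need care: that $\overline{\til{x}}=-\til{x}$ for pure $\til{x}$, and the exact entries of Table~\ref{table:octonionic-subalgebra}. As a cross-check, I would verify the same entries with Lemma~\ref{lemma:tilde-properties}(2), e.g.\ $\til{b}a=-\til{ba}=\til{ab}$, and with Corollary~\ref{corollary:alternative-subalgebras}.
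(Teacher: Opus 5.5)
Your proof is correct and follows the paper's route. Part (1) uses Proposition~\ref{proposition:self-orthogonality} followed by Proposition~\ref{proposition:shift-equivalence}, and part (2) shows $(a,b)(\til{a},\til{b})=0$ via the multiplication table of $\mathbb{O}_{a,b}$ from Lemma~\ref{lemma:octonionic-subalgebra}, then applies shift-equivalence. The only difference is cosmetic: the paper reads off the second component $-\frac{(b\til{a})a}{n(a)}=\til{b}$ from the orthogonalizer formula of Lemma~\ref{lemma:split-algebras-orthogonalizers}, with $\chi=-1$ entering through $b(\til{a}a)=-(b\til{a})a$, whereas you multiply out the product directly, so $\chi=-1$ appears as the vanishing of $n(a)+\gamma_n n(b)$.
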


\begin{proof}
By Lemma~\ref{lemma:split-algebras-orthogonalizers},
$$
O_{\A_{n+1}}((a,b)) = \left\{ \left(c, -\dfrac{(bc)a}{n(a)} \right) \; \bigg| \; \Re(c) = 0, \: b(ca) = \chi (bc)a \right\}.
$$
It follows from Lemma~\ref{lemma:octonionic-subalgebra} that $\til{e}_0, a, b$ form an octonionic subalgebra in $\A_{n+1}$. Hence for $c \in \{ \pm a, \pm b \}$ we have $b(ca) = (bc)a$, and for $c \in \{ \pm \til{a}, \pm \til{b} \}$ we have $b(ca) = -(bc)a$. Note that $b(ca) \neq 0$ in all cases. Therefore, for $\chi = 1$ we take $c \in \{ a, b \}$, and for $\chi = -1$ we take $c \in \{ \til{a}, \til{b} \}$.
\begin{enumerate}
    \item If $\chi = 1$, then, by Proposition~\ref{proposition:self-orthogonality}, $(a,b)$ is orthogonal to itself. Hence, by Proposition~\ref{proposition:shift-equivalence}, $(a,b)$ is also orthogonal to $(b, \gamma_n a)$. Note that we use only the fact that $a,b \in \E'_n$.
    \item If $\chi = -1$, then we consider first $c = \til{a}$. By Lemma~\ref{lemma:octonionic-subalgebra},
    $$
    -\dfrac{(b\til{a})a}{n(a)} = -\dfrac{(\til{ab})a}{n(a)} = -\dfrac{-n(a) \til{b}}{n(a)} = \til{b},
    $$
    so $(a,b)$ is orthogonal to $(\til{a}, \til{b})$. Then, by Proposition~\ref{proposition:shift-equivalence}, $(a,b)$ is also orthogonal to $(\til{b}, \gamma_n \til{a})$. \qedhere
\end{enumerate}
\end{proof}

\begin{theorem} \label{theorem:zero-divisors-criterion}
Let $n \geq 1$, $(a,b) \in \E'_n \times (\pm \E_n)$. Then $(a,b) \in Z(\A_{n+1})$, except for the following cases:
\begin{enumerate}
    \item $n \leq 2$ and $\chi = -1$;
    \item $b = \pm a$ and $\chi = \gamma_n = -1$;
    \item $b = \pm e_0$ and $\chi = \gamma_n n(a) = -1$.
\end{enumerate}
\end{theorem}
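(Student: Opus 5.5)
Since every $(a,b)\in \E'_n\times(\pm\E_n)$ lies in $DA(\A_{n+1})$ and satisfies condition~\eqref{equation:norm-condition} with $\chi=\gamma_n n(a)n(b)$, Lemma~\ref{lemma:split-algebras-annihilators} reduces the question to a single condition. The element $(a,b)$ is a zero divisor exactly when there is a nonzero $c\in\A_n$ with $b(ca)=\chi(bc)a$. Because $a$ is pure, it is enough to produce one basis element $c\in\E_n$ with this property. Conversely, to show that $(a,b)$ is \emph{not} a zero divisor, I would show that $b(ca)=(bc)a$ for every $c$ while $\chi=-1$. Then $(bc)a=0$ is forced, and since $a,b$ are invertible this gives $c=0$. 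The plan is to go through the cases of Remark~\ref{remark:component-types}, which classify the pair according to $ab$, and in each case to quote the lemma already proved.

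The exceptional cases I would handle first. If $b=\pm e_0$, then $e_0(ca)=(e_0c)a$ holds trivially, which is exactly the obstruction found in Lemma~\ref{lemma:e_0-special-element}; here $\chi=\gamma_n n(a)$. If $b=\pm a$, flexibility gives $a(ca)=(ac)a$, as in Lemma~\ref{lemma:type-1-element}, and $\chi=\gamma_n$. If $n\le 2$, then $\A_n$ is associative, so $b(ca)=(bc)a$ always holds and $\chi=-1$ is excluded. In each of these cases the same lemmas also give the converse: when $\chi=1$, the element is a zero divisor because it is orthogonal to itself by Proposition~\ref{proposition:self-orthogonality} (or $c=a$ works directly).

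For the generic case I take $n\ge3$ and $b\ne\pm a,\pm e_0$, and the task is to exhibit a suitable $c$.
\begin{itemize}
\item If $b=\pm\til a$ or $a=\pm\til b$ (type (II)), apply Lemma~\ref{lemma:type-2-element}. When $a=\pm\til e_0$ and $b=\pm c'$, apply Lemma~\ref{lemma:til-e_0-special-element}. Both lemmas treat $\chi=1$ and $\chi=-1$ and give membership in $Z'_e$ for $n\ge3$. The paired forms $(c',\til e_0)$ are covered by the same lemma.
\item If $a,b$ are both four times pure and $a,b,\til a,\til b$ are linearly independent, Lemma~\ref{lemma:common-nonspecial-element} gives an explicit orthogonal partner for either value of $\chi$.
\item If $b=\pm\til e_0$, use the $(c,\til e_0)$ part of Lemma~\ref{lemma:til-e_0-special-element}.
\end{itemize}
These cases are exhaustive. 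Indeed, the basis elements $a,b\in\E'_n\times\E_n$ either coincide up to the relations $\pm$, tilde or $e_0$, $\til e_0$, or else generate an octonionic subalgebra via Lemma~\ref{lemma:octonionic-subalgebra}.

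I expect the main obstacle to be the bookkeeping that makes the case list exhaustive and matches the sign of $\chi$ to the hypotheses of each lemma. The delicate point is the $n\ge3$ requirement in the $\chi=-1$ subcases. It rests on finding some $c\in\E'_n$ perpendicular to $\mathbb H_a$ (respectively $\mathbb H_{c'}$), which exists only when $\dim\A_n\ge8$. I would verify this dimension count explicitly and check that no case with $n\ge3$ falls outside the scope of the three exceptions listed in the statement.
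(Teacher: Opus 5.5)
Your proposal is correct and follows essentially the same route as the paper. Like the paper, you dispose of $\chi=1$ by self-orthogonality and, for $\chi=-1$, split into the cases $b=\pm a$, $b=\pm e_0$, $b=\pm\til{a}$, a component equal to $\pm\til{e}_0$, and the generic case with $a,b,\til{a},\til{b}$ linearly independent, citing Lemmas~\ref{lemma:type-1-element}, \ref{lemma:e_0-special-element}, \ref{lemma:type-2-element}, \ref{lemma:til-e_0-special-element} and \ref{lemma:common-nonspecial-element}; the differences are only that you settle the $n\le 2$ exception directly from associativity of $\A_n$, and you explicitly include the case $a\in\E''_n$, $b=\pm\til{e}_0$, which the paper's case list omits although Lemma~\ref{lemma:til-e_0-special-element} covers it.
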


\begin{proof}
If $\chi = 1$, then $(a,b)$ is orthogonal to itself by Proposition~\ref{proposition:self-orthogonality}, and thus $(a,b)$ is a zero divisor. Hence it is sufficient to consider the case when $\chi = -1$:
\begin{itemize}
\item If $a,b,\til{a},\til{b}$ are linearly independent elements in $\pm \E''_n$, then $(a,b)$ is a zero divisor by Lemma~\ref{lemma:common-nonspecial-element}. This case is only possible for $n \geq 3$.
\item If $a \in \E'_n$ and $b = \pm a$, then, by Lemma~\ref{lemma:type-1-element}, $(a,b)$ is not a zero divisor. This includes the case when $a = \til{e}_0$ and $b = \pm \til{e}_0$.
\item If $a \in \E''_n$ and $b = \pm \til{a}$, then, by Lemma~\ref{lemma:type-2-element}, $(a,b)$ is a zero divisor if and only if $n \geq 3$.
\item If $b = \pm e_0$, then, by Lemma~\ref{lemma:e_0-special-element}, $(a,b)$ is not a zero divisor.
\item If $a = \til{e}_0$ and $b \in \pm E''_n$, then, by Lemma~\ref{lemma:til-e_0-special-element}, $(a,b)$ is a zero divisor if and only if $n \geq 3$. \qedhere
\end{itemize}
\end{proof}

\medskip

The author is grateful to her scientific advisor Professor Alexander E. Guterman for posing the
problem and fruitful discussions.

\end{document}